\definecolor{e-mail}{rgb}{0,.40,.80}
\definecolor{reference}{rgb}{.20,.60,.22}
\definecolor{mrnumber}{rgb}{.80,.40,0}
\definecolor{citation}{rgb}{0,.40,.80}
\newcommand{\la}{\langle}
\newcommand{\ra}{\rangle}
\def\bG{{\mathbb G}}
\def\bP{{\mathbb P}}
\def\bZ{{\mathbb Z}}
\def\bQ{{\mathbb Q}}
\def\bC{{\mathbb C}}
\def\an{{\mathrm{an}}}
\def\log{{\mathrm{log}}}
\def\CalO{{\mathcal O}}
\def\Conf{{\mbox{Conf}}}
\def\Aut{{\mbox{Aut}}}
\def\Gal{{\mbox{Gal}}}
\def\Spec{{\mbox{Spec}}}
\def\ellet{{\ell\text{-\'et}}}
\def\Coeff{\mathtt{Coeff}}
\def\C{{\mathbb C}}
\def\Z{{\mathbb Z}}
\def\Q{{\mathbb Q}}
\def\Gal{{\mathrm{Gal}}}
\def\ab{\mathrm{ab}}
\def\la{\langle}
\def\ra{\rangle}
\def\Spec{\mathrm{Spec}\,}
\def\ovec#1{\overrightarrow{#1}}
\newtheorem{dfn}{Definition}[section]
\newtheorem{thm}[dfn]{Theorem}
\newtheorem{cor}[dfn]{Corollary}
\newtheorem{rem}[dfn]{Remark}
\theoremstyle{remark}
\theoremstyle{definition}
\newtheorem*{theorem*}{Theorem}
\newtheorem*{definition*}{Definition}
\newtheorem*{Remark*}{Remark}
\definecolor{eqblueborder}{RGB}{40, 90, 185}
\definecolor{eqbluefill}{RGB}{218, 230, 255}
\definecolor{geoborder}{RGB}{170, 50, 28}
\definecolor{geofill}{RGB}{255, 241, 230}
\definecolor{geotext}{RGB}{145, 38, 18}
\definecolor{bgcard}{RGB}{240, 244, 252}
\definecolor{arrowblue}{RGB}{18, 52, 140}
\definecolor{arrowred}{RGB}{158, 28, 28}
\definecolor{rulecol}{RGB}{170, 175, 190}
\definecolor{arrowgold}{RGB}{220, 170, 0}
\definecolor{eqgoldfill}{RGB}{255, 250, 230}
\definecolor{cBlue}{RGB}{52, 100, 182}
\definecolor{cBlueFill}{RGB}{218, 232, 252}
\definecolor{cDeepBlue}{RGB}{26, 58, 126}
\definecolor{cHubFill}{RGB}{195, 214, 246}
\definecolor{cGeoText}{RGB}{68, 76, 106}
\definecolor{cSep}{RGB}{140, 177, 226}
\definecolor{cGold}{RGB}{180, 130, 0}
\definecolor{cRed}{RGB}{168, 40, 40}
\definecolor{cLeg}{RGB}{185, 185, 185}
\tikzset{
     injarrow/.style={{Hooks[right, line cap=round]}-{Stealth[length=5pt]}},
     surjarrow/.style={-{Stealth[length=4pt, sep=-0.5pt]Stealth[length=5pt]}},
     morph/.style={-{Stealth[length=5pt]}},
     dasharrow/.style={-{Stealth[length=4.5pt]}, dashed},
   }
\newcommand*\snakerightarrow[2][]{%
\mathrel{%
\settowidth\@tempdima{\footnotesize#1}%
\settowidth\@tempdimb{\footnotesize#2}%
\ifdim\@tempdimb>\@tempdima\@tempdima=\@tempdimb\fi
\advance\@tempdima15pt
\tikz[font=\footnotesize,baseline=-2pt]
\draw[decorate,decoration={snake, pre length=3pt, post length=3pt, segment length=6pt, amplitude=2pt},->]
(0,0) -- node[above] {#2} node[below] {#1} (\@tempdima,0);%
}%
}
\begin{document}

\title{
Spence--Kummer's trilogarithm functional equation\\
and its underlying geometry:\\{\footnotesize a $\pi_1$-theoretic refinement and its $\ell$-adic Galois analogue}}

\author[D.~Shiraishi]{Densuke Shiraishi}
\address{Department of General Education,
National Institute of Technology, Kagawa College,
355 Chokushi-cho, Takamatsu, Kagawa 761-8058 Japan
}
\email{densuke.shiraishi@gmail.com,
shiraishi-d@t.kagawa-nct.ac.jp}
\date{}
\subjclass[2010]{11G55; 11F80, 11R32, 14H30}
\keywords{fundamental group, trilogarithm, Spence--Kummer equation, non-Fano arrangement}

\maketitle


\begin{abstract}
In this paper, we investigate the underlying geometry of the Spence--Kummer functional equation for the trilogarithm.
Our geometry naturally determines a certain path system on the projective line minus three points, connecting the standard tangential base point to the nine variables of the $Li_{3}$ terms in the equation,
reflecting the symmetry of the non-Fano arrangement.
Consequently,
we derive a precise form of the Spence--Kummer equation together with its $\ell$-adic Galois analogue by using algebraic relations between polylogarithm generating series arising from the path system.
We apply the tensor
and homotopy 
criteria for functional equations of complex and $\ell$-adic iterated integrals developed by Zagier and Nakamura--Wojtkowiak.
To compute the lower-degree terms of the functional equation in both the complex and the $\ell$-adic Galois cases,
we also focus on 
a diagram of three geometric objects:
the moduli space $M_{0,5}$,
the complement to the Coxeter arrangement of type ${\rm B_3}$, and the complement to the non-Fano arrangement.
\end{abstract}

\section*{Introduction and main results}

In the first half of the 19th century,
Spence \cite{Sp1809} and Kummer \cite{K1840} independently derived the famous $9$-term functional equation of the complex trilogarithm $Li_3$:
\begin{description}
   \item[(d-$\bC$)~Spence--Kummer equation]\label{d-C}
\begin{align}
&Li_3\left( \dfrac{x(1-y)^2}{y(1-x)^2};\gamma_1\right)+Li_3\left( xy ;\gamma_2\right)+Li_3\left( \dfrac{x}{y};\gamma_3\right)-2Li_3\left( \dfrac{x(1-y)}{y(1-x)};\gamma_4\right) \notag \\
&-2Li_3\left( \dfrac{x(1-y)}{x-1};\gamma_5\right)-2Li_3\left( \dfrac{1-y}{1-x};\gamma_6\right)-2Li_3\left( \dfrac{1-y}{y(x-1)};\gamma_7\right)-2Li_3\left(x;\gamma_8\right) \notag  \\
&-2Li_3\left(y;\gamma_9\right)+2\zeta(3)=\log^2(y;\gamma_9)\log\left(\dfrac{1-y}{1-x};\gamma_6\right)-\dfrac{\pi^2}{3}\log(y;\gamma_9)-\dfrac{1}{3}\log^3(y;\gamma_9), \notag
\end{align}
\end{description}
which embodies a beautiful harmony among $Li_3$ terms.
This functional equation relies on a subtle geometric balance of
the path system 
$\{\gamma_i\}_{i=1,\ldots,9}$ on $\mathbb{P}^1(\bC) \backslash \{0,1,\infty\}$
from the standard tangential base point $\overrightarrow{01}$ to the nine variables of $Li_3$ on the left-hand side.
However,
neither Spence nor Kummer specified the proper domain of $(x,y)$,
nor did they provide a precise definition of the path system.
The purpose of this paper is to 
address these points
by investigating the underlying geometry of the functional equation,
which enables us 
to derive not only the complex case but also 
its $\ell$-adic Galois analogue.
Our derivation is based on a delicate $\pi_1$-theoretic analysis,
building upon the framework established in Nakamura--Wojtkowiak \cite{NW12}.

In the present paper, 
we regard
the complex polylogarithm $Li_k(z)$
as a map
\begin{align}
Li_k(z): \pi_1^{\rm top}\left(\mathbb{P}^1(\mathbb{C}) \backslash \{0,1,\infty\}; \overrightarrow{01}, z\right) \to \bC,
\quad \gamma \mapsto Li_k(z;\gamma),
\end{align}
which represents 
a certain iterated integral along the topological paths from $\overrightarrow{01}$ to $z$,
where $Li_k(z;\gamma)$ denotes the image of $\gamma$ under this map.
See \S \ref{comp-poly} for precise definitions of $Li_k(z;\gamma)$ and $\log(z;\gamma)$.

Let $K$ be a subfield of $\bC$ with algebraic closure $\overline{K}$.
The geometric object for describing $\{\gamma_i\}_{i=1,\ldots,9}$ is the complement to the non-Fano arrangement over $\overline{K}$
\begin{align}\label{V-fano}
V_{\rm non\text{-}Fano} := \Spec\left({\overline{K}}\left[s_1,s_2,\frac{1}{s_1 s_2(1-s_1)(1-s_2)(s_1-s_2)(1-s_1 s_2)}\right]\right)
\end{align}
together with the family of nine morphisms
\vspace{-0.3cm}
\begin{figure}[h]
\begin{tikzpicture}
\node at (-0.27,0.1) {$\{f_i\}_{i=1,\ldots,9}:$};
\node at (1.7,0.1) {$V_{\rm non\text{-}Fano}$};
\node at (5.5,0.1) {${\bP}^{1}_{\overline{K}} \backslash \{0,1,\infty\}$};
\draw[->] (2.5,0.45) to [out=50,in=130] (4.2,0.45);
\draw[->] (2.5,0.25) to [out=30,in=150] (4.2,0.25);
\draw[->] (2.5,-0.25) to [out=-50,in=-130] (4.2,-0.25);
\node at (3.35,0.09) {$\vdots$};
\node at (3.35,1) {\tiny $f_1$};
\node at (3.35,0.65) {\tiny $f_2$};
\node at (3.35,-0.8) {\tiny $f_9$};
\end{tikzpicture}
\end{figure}
\vspace{-0.3cm}

\noindent
defined as follows:
\begin{align}\label{fi}
f_1(s_1,s_2)&:=\dfrac{s_1(1-s_2)^2}{s_2(1-s_1)^2},&
f_2(s_1,s_2)&:=s_1 s_2,&
f_3(s_1,s_2)&:=\dfrac{s_1}{s_2},\notag \\
f_4(s_1,s_2)&:=\dfrac{s_1(1-s_2)}{s_2(1-s_1)},& f_5(s_1,s_2)&:=\dfrac{s_1(1-s_2)}{s_1-1},&
f_6(s_1,s_2)&:=\dfrac{1-s_2}{1-s_1},\\
f_7(s_1,s_2)&:=\dfrac{1-s_2}{s_2(s_1-1)},&
f_8(s_1,s_2)&:=s_1,&
f_9(s_1,s_2)&:={s_2}. \notag
\end{align}
It should be noted that the exclusion of the divisors $s_1=s_2$ and $s_1s_2=1$ in the definition of $V_{\rm non\text{-}Fano}$ ensures that 
all nine morphisms $f_1,\ldots,f_9$ are well-defined.
Thus,
$V_{\rm non\text{-}Fano}$ is the most natural affine algebraic variety
equipped with a family of nine well-defined morphisms to  
${\bP}^{1}_{\overline{K}} \backslash \{0,1,\infty\}$ described above.

The arrangement
$s_1 s_2(1-s_1)(1-s_2)(s_1-s_2)(1-s_1 s_2)$ is shown in FIGURE \ref{fig:non-Fano}.
Hereafter, we refer to this arrangement as the non-Fano arrangement, named after the well-known notion of the non-Fano matroid (see Remark \ref{rem-non-Fano} below).
The affine variety $V_{\rm non\text{-}Fano}$ is isomorphic to Goncharov's moduli space of certain $7$-point configurations on $\mathbb{P}^2_{{\overline{K}}}$ (see Remark \ref{gon} for details).

We consider
the $K$-rational tangential base point
\begin{align}
\vec{v}:\Spec\Big({K}((t))\Big) \to V_{\rm non\text{-}Fano}
\end{align}
over the $K(t)$-rational point $(t^2,t)$.

\vspace{-0.1cm}
\begin{figure}[hbtp]
\centering
\caption{The non-Fano arrangement}
\label{fig:non-Fano}
\begin{tikzpicture}[scale=1]
\draw[-] (-5,1) to (-9,1);
\draw[-] (-5,0) to (-9,0);
\draw[-] (-5,-1) to (-9,-1);
\draw[-] (-8,-2) to (-8,2);
\draw[-] (-7,-2) to (-7,2);
\draw[-] (-6,-2) to (-6,2);
\draw[-] (-5,2) to (-9,-2);
\draw[-] (-5,-2) to (-9,2);
\node at (-7,2.3) {~};
\node (a0) at (-8.2, -2.2) {\tiny $s_1=0$};
\node (a1) at (-7, -2.2) {\tiny $s_1=1$};
\node (ainf) at (-5.8, -2.2) {\tiny $s_1=\infty$};
\node (aa0) at (-9.6, -1) {\tiny $s_2=0$};
\node (aa1) at (-9.6, 0) {\tiny $s_2=1$};
\node (aainf) at (-9.6, 1) {\tiny $s_2=\infty$};
\node (cL1) at (-9.55,-2) {\tiny $s_1=s_2$};
\node (cL1) at (-4.4,-2) {\tiny $1=s_1 s_2$};
\end{tikzpicture}
\end{figure}

Our main result for the complex case is stated as follows.
The following theorem refines the Spence--Kummer functional equation (d-$\bC$), by precisely specifying the proper domain of $(x,y)$ and the family $\{\gamma_i\}_{i=1,\ldots,9}$ defining the nine principal polylogarithm terms.
Let $V_{\rm non\text{-}Fano}^\an$ be the complex analytic space associated to $V_{\rm non\text{-}Fano}$ via the inclusion $\overline{K} \hookrightarrow \mathbb{C}$.
We denote by $f^{\mathrm{an}}_i: V_{\rm non\text{-}Fano}^{\mathrm{an}} \to {\bP}^{1}(\bC) \backslash \{0,1,\infty\}~(i=1,...,9)$ the morphisms between complex analytic spaces associated to $f_i
 ~(i=1,...,9)$ in (\ref{fi}).

\begin{thm}[The $9$-term functional equation for the complex trilogarithm]\label{main1}
Given a point
$
(x,y) \in V_{\rm non\text{-}Fano}^\an
$
and a path $\gamma_0 \in \pi_1^{\rm top}\left(V_{\rm non\text{-}Fano}^\an;{\vec{v}},(x,y)\right)$,
define the path system $\{\gamma_i\}_{i=1,\ldots,9}$ associated with $\gamma_0$ by
\begin{align}\label{gammai}
\gamma_i:= \delta_i \cdot f_i^{\an}\left(\gamma_0\right) \in \pi_1^{\rm top}\left(\mathbb{P}^1(\mathbb{C}) \backslash \{0,1,\infty\}; \overrightarrow{01}, f^{\an}_{i}(x,y)\right),
\end{align}
where $\delta_i \in \pi_1^{\rm top}\left({\bP}^{1}(\bC) \backslash \{0,1,\infty\};\overrightarrow{01},f_i^{\an}\left(\vec{v}\right)\right)$ is as shown in TABLE \ref{table_delta} and the paths are composed from left to right.
Then the Spence--Kummer functional equation (d-$\bC$) above holds.
\end{thm}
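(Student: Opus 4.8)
The plan is to deduce (d-$\bC$) from the tensor--homotopy criterion for functional equations of iterated integrals (due to Zagier and to Nakamura--Wojtkowiak), applied to the nine maps $f_i$ of \eqref{fi} together with the tangential base point $\vec v$. First I would reinterpret the left-hand side Lie-theoretically: each $Li_3\!\left(f_i(x,y);\gamma_i\right)$ is a coefficient of the complex associator attached to $\gamma_i=\delta_i\cdot f_i^{\an}(\gamma_0)$ in the pro-unipotent fundamental groupoid of $\bP^1(\bC)\setminus\{0,1,\infty\}$, and all of the paths $f_i^{\an}(\gamma_0)$ are pushed forward from the single path $\gamma_0$ on $V_{\rm non\text{-}Fano}^{\an}$. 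Since each $f_i$ and each $1-f_i$ is, up to sign, a monomial in the six functions $s_1,s_2,1-s_1,1-s_2,s_1-s_2,1-s_1s_2$ defining the non-Fano arrangement, every form $d\log f_i$ and $d\log(1-f_i)$ lies in the $\bQ$-span of the $d\log$ of those six functions. Hence $\Phi(x,y):=\sum_i n_i\,Li_3(f_i)+2\zeta(3)-R(y)$, with $(n_1,\dots,n_9)=(1,1,1,-2,-2,-2,-2,-2,-2)$ and $R(y)$ the right-hand side of (d-$\bC$), is a horizontal section of the relevant Knizhnik--Zamolodchikov-type connection on $V_{\rm non\text{-}Fano}^{\an}$, and Theorem \ref{main1} becomes the statement that this locally constant section is $0$.

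By the tensor criterion it is enough to control $\Phi$ in low degree: (i) the symbol $\sum_i n_i\,\mathrm{Symb}\,Li_3(f_i)$ in $\bigl(\bC(s_1,s_2)^\times_\bQ\bigr)^{\otimes 3}$ must equal the symbol of $R(y)$, and (ii) the remaining locally constant function must be identified at $\vec v$. For (i) I would substitute $\mathrm{Symb}\,Li_3(z)=-(1-z)\otimes z\otimes z$ along with the shuffle-symbols of $\log^2(y)\log\tfrac{1-y}{1-x}$, $\log^3 y$, $\zeta(2)\log y$ and $\zeta(3)$, expand each $f_i,1-f_i$ in the six arrangement coordinates, and check that the $(1,1,1,-2,\dots,-2)$-combination telescopes. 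This telescoping is the combinatorial shadow of the triple points of the non-Fano configuration, controlled by the arrangement's symmetry group acting on the nine $f_i$; its degree-$2$ truncation is a five-term-type dilogarithm identity among the $f_i$, which is the input coming from the diagram relating $M_{0,5}$, the complement of the Coxeter arrangement of type $\mathrm{B}_3$, and $V_{\rm non\text{-}Fano}$.

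For (ii), the homotopy part, I would transport $\Phi$ to the tangential base point $\vec v$ over $(t^2,t)$ and take the regularized limit as $t\to 0$. Each $f_i(t^2,t)$ degenerates to $0$, $1$ or $\infty$ at a prescribed rate, so the regularized values of $Li_3\!\left(f_i(\vec v);\delta_i\right)$ become explicit elements of the $\bQ$-algebra generated by $\zeta(3)$, $2\pi i$, $\log t$ and logarithms of units, the precise expressions being dictated by the base-point paths $\delta_i$ of TABLE \ref{table_delta}, whose role is exactly to convert $f_i^{\an}(\vec v)$-based paths into $\overrightarrow{01}$-based ones so that this regularization is effective. Summing the $n_i$-weighted limits and the limits of the $\log$-terms in $R(y)$, all $\log t$-divergences cancel within $\Phi(\vec v)$ and the finite part is seen to vanish; since $V_{\rm non\text{-}Fano}^{\an}$ is connected this forces $\Phi\equiv 0$, which is (d-$\bC$).

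I expect the main obstacle to be step (ii). The symbol identity (i) is a bounded, if intricate, computation once the arrangement coordinates are fixed, whereas making the homotopy criterion effective requires both (a) verifying that each composite $\delta_i\cdot f_i^{\an}(\gamma_0)$ genuinely selects the branch of $Li_3$ occurring in (d-$\bC$) --- equivalently that the geometric relations among the associators attached to the $f_i$ hold exactly, not only modulo higher degree --- and (b) computing the degree $\le 3$ parts of those associators, the source of the $\zeta(3)$ and $\pi^2$ contributions, which is precisely where the compatibility of the three models $M_{0,5}$, the $\mathrm{B}_3$-complement and $V_{\rm non\text{-}Fano}$ is used. Everything else reduces to standard associator calculus and the symbol computation above.
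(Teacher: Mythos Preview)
Your plan correctly centers on the Nakamura--Wojtkowiak tensor--homotopy criteria, and your description of part (i) agrees with the paper's tensor check $\sum_i d_i\, f_i\otimes(f_i\wedge(1-f_i))=0$. But your account of part (ii) mis-characterizes the homotopy criterion and leaves a genuine gap.

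First, the homotopy criterion \cite[Theorem~5.7,(i)$_\bC$]{NW12} is not ``evaluate the locally constant $\Phi$ at $\vec v$ via a regularized limit''; it is the purely algebraic vanishing
\[
\sum_{i=1}^{9} d_i\,\varphi_3\bigl({\rm gr}_\Gamma^3(\iota_{\delta_i}\circ f^{\an}_{i*})\bigr)=0
\quad\text{in}\quad \Hom_{\bZ}\!\bigl({\rm gr}_\Gamma^3\pi_1^{\rm top}(V_{\rm non\text{-}Fano}^{\an},\vec v),\bZ\bigr),
\]
and checking it requires an explicit free basis of ${\rm gr}_\Gamma^3$. The paper obtains this from the presentation $\pi_1^{\rm top}(V_{\rm non\text{-}Fano}^{\an},\vec v)\cong F_2\ltimes F_4$ produced in Section~2 via the finite cover $f_{\rm cov}:V_{\rm B_3}\to M_{0,5}$ and Reidemeister--Schreier; the resulting generators $B_1,\dots,B_8$, the 22 Hall commutators spanning ${\rm gr}_\Gamma^3$, and the images $\iota_{\delta_i}\bigl(f_{i*}^{\an}(B_j)\bigr)$ are tabulated (TABLE~\ref{table-main}, \ref{table-mainhom1}, \ref{table-mainhom2}) and the criterion is verified entry by entry. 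In your sketch the $M_{0,5}$--$V_{\rm B_3}$--$V_{\rm non\text{-}Fano}$ diagram appears only atmospherically, whereas in the paper it is precisely the computational input that makes this check, and the lower-degree computations (\ref{c_compassoc})--(\ref{logbranch}) of the branch relations among the $\log$'s, feasible.

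Second, your inference ``symbol vanishes $\Rightarrow$ $\Phi$ locally constant'' is not valid in weight~3: the symbol controls $\Phi$ only modulo products of lower-weight polylogarithms, so one cannot pass directly to evaluation at $\vec v$. The paper avoids this issue altogether: the two criteria together give $\sum_i d_i\,\mathcal L^{\varphi_3}_\bC\bigl(f_i(x,y),f_i(\vec v);f_i(\gamma_0)\bigr)=0$ exactly, and converting $\mathcal L^{\varphi_3}_\bC$ to $Li_3(\,\cdot\,;\gamma_i)$ via the associator factorization (\ref{chain-c}) and the polylog--BCH formula (\ref{cii3}) introduces nine $Li_2$-cross-terms. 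These are cancelled not by a single ``five-term-type'' identity but by \emph{three} independent dilogarithm functional equations --- of Schaeffer, Kummer and Hill type (Theorem~\ref{c-dilog}) --- each proved by its own degree-2 instance of the same tensor--homotopy criteria. You should plan to establish those three lemmas separately; they are genuine intermediate results, not byproducts of the degree-3 argument.
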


\renewcommand{\arraystretch}{1.6}
\tabcolsep = 0.3cm
\begin{table}[hbtp]
  \caption{$\delta_1, \ldots ,\delta_9$}
  \label{table_delta}
  \centering
  \begin{tabular}{|c|c|c|c|c|}
    \hline
    $i$ & $f^{}_{i}(x,y)$ & $f_i(t^2,t)$ & $f_i\left(\vec{v}\right)$ & $\delta_i \in \pi_1^{\rm top}\left({\bP}^{1}(\bC) \backslash \{0,1,\infty\};\overrightarrow{01},f_i^{\an}\left(\vec{v}\right)\right)$  \\
    \hline \hline

    $1$  & $\frac{x(1-y)^2}{y(1-x)^2}$ & $\frac{t}{(1+t)^2}$  & $\overrightarrow{01} \approx f_1\left(\vec{v}\right)$ & $\delta_1:=1
~(=\rm{trivial~path})$  \\ \hline

    $2$  & $xy$ & $t^3$  & $\overrightarrow{01} \approx f_2\left(\vec{v}\right)$ & $\delta_2:=1$  \\ \hline

    $3$  & $\frac{x}{y}$ & $t$  & $\overrightarrow{01} = f_3\left(\vec{v}\right)$ & $\delta_3:=1$  \\ \hline

    $4$  & $\frac{x(1-y)}{y(1-x)}$ & $\frac{t}{1+t}$  &  $\overrightarrow{01} \approx f_4\left(\vec{v}\right)$ & $\delta_4:=1$  \\
    \hline

    $5$  & $\frac{x(1-y)}{x-1}$ & $\frac{-t^2}{1+t}$  &  $\overrightarrow{0\infty} \approx f_5\left(\vec{v}\right)$ & $\delta_5:=\delta_{\overrightarrow{0\infty}}\left(=\text{as in FIGURE~\ref{fig:path}}\right)$  \\
    \hline

    $6$  & $\frac{1-y}{1-x}$ & $\frac{1}{1+t}$  &  $\overrightarrow{10} \approx f_6\left(\vec{v}\right)$ & $\delta_6:=\delta_{\overrightarrow{10}}\left(=\text{as in FIGURE~\ref{fig:path}}\right)$  \\
    \hline

    $7$  & $\frac{1-y}{y(x-1)}$ & $\frac{-1}{t(1+t)}$  &  $\overrightarrow{\infty0} \approx f_7\left(\vec{v}\right)$ & $\delta_7:=\delta_{\overrightarrow{\infty0}}\left(=\text{as in FIGURE~\ref{fig:path}}\right)$  \\
    \hline

    $8$  & $x$ &  $t^2$  &  $\overrightarrow{01} \approx f_8\left(\vec{v}\right)$ & $\delta_8:=1$  \\
    \hline

    $9$  & $y$ &  $t$ &  $\overrightarrow{01} = f_9\left(\vec{v}\right)$ & $\delta_9:=1$  \\
    \hline
  \end{tabular}
\end{table}

In TABLE \ref{table_delta},
we identify the images $f_i\left(\vec{v}\right)$ with standard $K$-rational tangential base points of ${\bP}^{1}_{\overline{K}} \backslash \{0,1,\infty\}$ under Galois equivalence (denoted $\approx$ in the sense of \cite[\S5.9]{N02}).
In FIGURE \ref{fig:path},
the dashed line represents ${\mathbb P}^1({\mathbb R})\backslash 
\{0,1,\infty\}$, and
the upper half-plane is located above this line.

\begin{figure}[hbtp]
\centering
\caption{Topological paths on ${\bP}^{1}(\bC) \backslash \{0,1,\infty\}$}
\label{fig:path}
\begin{tikzpicture}
\draw (2,0) -- (6,0);
\draw (4,0) -- (3.9,0.1);
\draw (4,0) -- (3.9,-0.1);



\draw (2,1) -- (2.1,1.1);
\draw (2,1) -- (2.1,0.9);

\draw (2,3) -- (2.1,3.1);
\draw (2,3) -- (2.1,2.9);



\draw (2,0) to [out=0,in=270] (2.5,0.5);
\draw (2.5,0.5) to [out=90,in=0] (2,1);
\draw (2,1) to [out=180,in=90] (1.5,0.5);
\draw (1.5,0.5) to [out=270,in=180] (2,0);


\draw (2,0) to [out=0,in=270] (3.5,1.5);
\draw (3.5,1.5) to [out=90,in=0] (2,3);
\draw (2,3) to [out=180,in=0] (-2,0);
\draw[dotted](-3.5,0)--(7.5,0);

\fill[white](2,0)circle(0.07);
\fill[white](6,0)circle(0.07);
\fill[white](-2,0)circle(0.07);

\node at (0,3.3) {~};
\node at (2,-0.4) {$0$};
\node at (4,-0.4) {$\delta_{\overrightarrow{10}}$};
\node at (1.1,0.5) {$\delta_{\overrightarrow{0\infty}}$};
\node at (3.9,1.7) {$\delta_{\overrightarrow{\infty0}}$};
\node at (6,-0.4) {$1$};
\node at (2,0) {${\circ}$};
\node at (6,0) {${\circ}$};
\node at (-2,0) {${\circ}$};
\node at (-2,-0.4) {$\infty$};
\end{tikzpicture}
\end{figure}

We next discuss the $\ell$-adic Galois case for any fixed prime number $\ell$.
Let
\begin{align}
G_K=\Gal(\overline{K}/K)
\end{align}
be the absolute Galois group of a subfield $K \subset \mathbb{C}$.
Suppose that $z$ is a $K$-rational point of $\mathbb{P}^1 \backslash \{0,1,\infty\}$, and
consider each topological path $\gamma \in \pi_1^{\rm top}\left(\mathbb{P}^1(\mathbb{C}) \backslash \{0,1,\infty\}; \overrightarrow{01}, z\right)$ to be a pro-$\ell$ \'etale path $\gamma \in \pi_1^{\ell \text{-\'et}}\left(\mathbb{P}^1_{\overline{K}} \backslash \{0,1,\infty\}; \overrightarrow{01}, z\right)$ through the comparison map induced by the inclusion $\overline{K} \hookrightarrow \mathbb{C}$.
For $\sigma \in G_K$,
the $\ell$-adic Galois polylogarithm $Li_k^\ell(z;\gamma,\sigma)$ was introduced by Wojtkowiak as the $\ell$-adic iterated integral along $\gamma$.
See \S \ref{l-poly} for a precise definition of $Li_k^\ell(z;\gamma,\sigma)$.
We understand
$Li_k^\ell(z)$ to be a map
\begin{align}
Li_k^\ell(z): \pi_1^{\rm top}\left(\mathbb{P}^1(\mathbb{C}) \backslash \{0,1,\infty\}; \overrightarrow{01}, z\right) \times G_K \to \bQ_\ell, \quad
(\gamma,\sigma) \mapsto Li_k^\ell(z;\gamma,\sigma),
\end{align}
with values in the $\ell$-adic number field.
This object
is closely related to the generalized $\ell$-adic Soul\'e character
\[
\tilde{\chi}_{k}
^{z,\gamma}: G_K \to {\mathbb Z}_{\ell}
\]
formulated by Nakamura-Wojtkowiak \cite[DEFINITION~3]{NW99}
via the explicit formula \cite[COROLLARY]{NW99},~\cite[Proposition 4.2]{NS25}.
In particular,
the $\ell$-adic Galois zeta value (i.e., $\ell$-adic Soul\'e element) 
${\boldsymbol \zeta}_{{k}}^\ell(\sigma)$ is defined by the special value ${Li}^\ell_{{k}}\left(\overrightarrow{10};\delta_{\overrightarrow{10}},\sigma\right)$.

Our main result for the  $\ell$-adic Galois case is as follows.
Notably, the following functional equation involves nontrivial lower-weight terms including two $Li_2^\ell$ terms in contrast to the complex Spence-Kummer equation (d-$\mathbb{C}$).
These lower-weight terms, 
which arise specifically in the $\ell$-adic Galois setting, 
are referred to in \cite[Subsection 4.3]{NW12} as ``$\ell$-adic error terms'';
see Remark \ref{Zl-integral} and Remark \ref{arithm} for their geometric and arithmetic meanings.

\begin{thm}[The $9$-term functional equation for the $\ell$-adic Galois trilogarithm]\label{main2}
Given a $K$-rational point
$(x,y) \in V_{\rm non\text{-}Fano}(K)$ and a path $\gamma_0 \in \pi_1^{\rm top}\left(V_{\rm non\text{-}Fano}^\an;{\vec{v}},(x,y)\right)$,
define the path system $\{\gamma_i\}_{i=1,\ldots,9}$ associated with $\gamma_0$ as in (\ref{gammai}).
For any $\sigma \in G_K$,
the following holds:
\begin{description}
   \item[(d-$\ell$)~$\ell$-adic Spence--Kummer equation]
\begin{align}
&Li_3^{\ell}\left( \dfrac{x(1-y)^2}{y(1-x)^2};\gamma_1,\sigma\right)+Li_3^{\ell}\left( xy ;\gamma_2,\sigma\right)+Li_3^{\ell}\left( \dfrac{x}{y};\gamma_3,\sigma\right)-2Li_3^{\ell}\left( \dfrac{x(1-y)}{y(1-x)};\gamma_4,\sigma\right) \notag \\
&-2Li_3^{\ell}\left( \dfrac{x(1-y)}{x-1};\gamma_5,\sigma\right)-2Li_3^{\ell}\left( \dfrac{1-y}{1-x};\gamma_6,\sigma\right)-2Li_3^{\ell}\left( \dfrac{1-y}{y(x-1)};\gamma_7,\sigma\right)-2Li_3^{\ell}\left(x;\gamma_8,\sigma\right) \notag \\
&-2Li_3^{\ell}\left(y;\gamma_9,\sigma\right)+2{\boldsymbol \zeta}^{\ell}_3(\sigma)=-\rho_{y,\gamma_9}(\sigma)^2\rho_{\frac{1-y}{1-x},\gamma_6}(\sigma)+2{\boldsymbol \zeta}^{\ell}_2(\sigma)\rho_{y,\gamma_9}(\sigma)+\dfrac{1}{3}\rho_{y,\gamma_9}(\sigma)^3 \notag \\
&
-Li_2^\ell\left(\dfrac{x(1-y)}{x-1};\gamma_5,\sigma\right)-Li_2^\ell\left(\dfrac{1-y}{y(x-1)};\gamma_7,\sigma\right)+\dfrac{1}{2}\rho_{\frac{1-xy}{1-x},\gamma'_5}(\sigma)
-\dfrac{1}{3}\rho_{y,\gamma_9}(\sigma), \notag
\end{align}
\end{description}
together with the following equation for the generalized $\ell$-adic Soul\'e character $\tilde{\chi}_{k}
^{z,\gamma}: G_K \to {\mathbb Z}_{\ell}$:
\begin{description}
\item[(d'-$\ell$)~Integral $\ell$-adic Spence-Kummer equation]
\begin{align} \notag
&\tilde{\chi}_{3}^{{\frac{x(1-y)^2}{y(1-x)^2}},\gamma_1}(\sigma)
+\tilde{\chi}_{3}^{{xy},\gamma_2}(\sigma)
+\tilde{\chi}_{3}^{{\frac{x}{y}},\gamma_3}(\sigma)
-2\tilde{\chi}_{3}^{\frac{x(1-y)}{y(1-x)},\gamma_4}(\sigma) \notag \\
&-2\tilde{\chi}_{3}^{\frac{x(1-y)}{x-1},\gamma_5}(\sigma)
-2\tilde{\chi}_{3}^{\frac{1-y}{1-x},\gamma_6}(\sigma)-2\tilde{\chi}_{3}^{\frac{1-y}{y(x-1)},\gamma_7}(\sigma)
-2\tilde{\chi}_{3}^{{x},\gamma_8}(\sigma) \notag \\
&-2\tilde{\chi}_{3}^{{y},\gamma_9}(\sigma)+2\tilde{\chi}_{3}^{{\overrightarrow{10}},\delta_{\overrightarrow{10}}}(\sigma)
=-2\rho_{y,\gamma_9}(\sigma)^2 \rho_{\frac{1-y}{1-x},\gamma_6}(\sigma)-12\tilde{\chi}_{2}^{{\overrightarrow{10}},\delta_{\overrightarrow{10}}}(\sigma)\rho_{y,\gamma_9}(\sigma) \notag \\
&+2\chi(\sigma)\left(\tilde{\chi}_{2}^{\frac{x(1-y)}{x-1},\gamma_5}(\sigma)+\tilde{\chi}_{2}^{\frac{1-y}{y(x-1)},\gamma_7}(\sigma)\right)
+
\chi(\sigma)^2\rho_{\frac{1-xy}{1-x},\gamma'_5}(\sigma)
\notag \\
&
-\frac{2}{3}\rho_{y,\gamma_9}(\sigma)\left(1-\rho_{y,\gamma_9}(\sigma)\right)\left(1+\rho_{y,\gamma_9}(\sigma)\right).\notag
\end{align}
\end{description}
Here,
$\chi: G_K \to {\mathbb Z}_{\ell}^\times$ is the $\ell$-adic cyclotomic character,
$
\rho_{z,\gamma}: G_K \to \mathbb{Z}_{\ell}
$
is the Kummer $1$-cocycle
defined by $\sigma(z^{1/{\ell^k}})=\zeta_{\ell^k}^{\rho_{z,\gamma}(\sigma)}z^{1/{\ell^k}}$
with respect to $\{z^{1/{\ell^k}}\}_{k \in \mathbb{N}}$ along $\gamma \in \pi_1^{\rm top}\left(\mathbb{P}^1(\mathbb{C}) \backslash \{0,1,\infty\}; \overrightarrow{01}, z\right)$,
and $\gamma'_5$ is a certain path associated
with $\gamma_5$ (see (\ref{gamma'}) for the precise definition).
\end{thm}

A primary step of the present work is to capture
$\pi_1^{\rm top}\left(V_{\rm non\text{-}Fano}^{\rm an}\right)$ as a subquotient of the well-known fundamental group $\pi_1^{\rm top}\left(M_{0,5}^{\rm an}\right)$ of the moduli space of the projective line with five ordered points
through an intermediate geometric object ${V}_{\rm B_3}$, that is,
the complement to the Coxeter arrangement of type ${\rm B_3}$.
Using the diagram FIGURE \ref{kdiag} and the Galois theory for $M_{0,5}^{\rm an}$,
we explicitly construct topological loops on $V_{\rm non\text{-}Fano}^{\rm an}$ that serve as generators of $\pi_1^{\rm top}\left(V_{\rm non\text{-}Fano}^{\rm an}\right)$.

\begin{figure}[h]
\centering
\caption{Key diagram to capture $\pi_1^{\rm top}\left(V_{\rm non\text{-}Fano}^{\rm an}\right)$}
\label{kdiag}
\resizebox{\textwidth}{!}{%
\begin{tikzpicture}[scale=1,
  line width=0.75pt,
  sp/.style={
    draw=blue!55!black, fill=blue!7, rounded corners=5pt,
    inner xsep=9pt, inner ysep=5pt, font=\small
  },
  gp/.style={
    draw=orange!75!black, fill=orange!7, rounded corners=5pt,
    inner xsep=9pt, inner ysep=5pt, font=\small
  },
  tp/.style={
    draw=teal!65!black, fill=teal!7, rounded corners=5pt,
    inner xsep=9pt, inner ysep=5pt, font=\small
  },
  lbl/.style={font=\scriptsize},
  tlbl/.style={font=\tiny},
]


\node[sp] (VB3)  at (0,    0)    {$V_{{\rm B}_3}^{\rm an}$};
\node[sp] (VnF)  at (3.8,  0)    {$V_{\rm non\text{-}Fano}^{\rm an}$};
\node[sp] (M05)  at (0,   -2.6)  {$M_{0,5}^{\rm an}$};

\draw[injarrow, blue!55!black] (VB3) -- (VnF)
    node[midway, above, tlbl] {open immersion};

\draw[surjarrow, blue!55!black] (VB3) -- (M05);
\node[lbl, left=2pt]  at (-0.1, -1.3) {fin.\ Gal.};
\node[lbl, right=2pt] at ( 0.1, -1.3) {$f_{\rm cov}^{\rm an}$};

\node[tlbl, blue!45!black] at (2.5, -2.8)
  {\itshape geometric objects};


\draw[dotted, gray!45, line width=0.5pt] (5.8, 0.9) -- (5.8, -3.1);
\node[rotate=360, gray!55!black, tlbl] at (5.8, 0.5)
  {$\pi_1^{\rm top}(\,\cdot\,)$};

\draw[-{Stealth[length=4pt]}, gray!45, line width=0.5pt,
      shorten >=3pt, shorten <=2pt]
  (5.2,0) to (6.4,0);


\node[gp] (pVB3) at (7.7,   0)    {$\pi_1^{\rm top}(V_{{\rm B}_3}^{\rm an})$};
\node[gp] (pVnF) at (12.1,  0)    {$\pi_1^{\rm top}(V_{\rm non\text{-}Fano}^{\rm an})$};
\node[gp] (pM05) at (7.7,  -2.6)  {$\pi_1^{\rm top}(M_{0,5}^{\rm an})$};

\draw[surjarrow, orange!65!black] (pVB3) -- (pVnF);

\draw[injarrow, orange!65!black] (pVB3) -- (pM05)
    node[midway, right=1pt, lbl] {$f_{{\rm cov}*}^{\rm an}$};


\node[tlbl, gray!65!black, draw=gray!30, fill=gray!5,
      rounded corners=4pt, inner sep=5pt, align=center]
  (galnote) at (11.9, -1.55)
  {generators of $\pi_1^{\rm top}(M_{0,5}^{\rm an})$\\
   $\leadsto$ generators of $\pi_1^{\rm top}(V_{\rm non\text{-}Fano}^{\rm an})$\\[1ex] (\text{via Galois theory of} $M_{0,5}^{\rm an}$)};

\draw[-{Stealth[length=4pt]}, gray!45, line width=0.5pt,
      shorten >=3pt, shorten <=2pt]
  (galnote.north) to[out=90, in=270] (11.9, -0.38);

\draw[-{Stealth[length=4pt]}, gray!45, line width=0.5pt,
      shorten >=3pt, shorten <=2pt]
  (pM05) to (galnote);

\node[tlbl, gray!65!black, draw=gray!30, fill=gray!5,
      rounded corners=4pt, inner sep=5pt, align=center]
  (galnote2) at (3.8, -1.55)
  { loops on $M_{0,5}^{\rm an}$\\
   $\leadsto$ loops on $V_{\rm non\text{-}Fano}^{\rm an}$\\[1ex] (via taking $\left(f_{{\rm cov}}^{\rm an}\right)^{-1}$)};

\draw[-{Stealth[length=4pt]}, gray!45, line width=0.5pt,
      shorten >=3pt, shorten <=2pt]
  (galnote2.north) to[out=90, in=270] (3.8, -0.38);

\draw[-{Stealth[length=4pt]}, gray!45, line width=0.5pt,
      shorten >=3pt, shorten <=2pt]
  (M05) to (galnote2);

\node[tlbl, orange!55!black] at (10.9, -2.8)
  {\itshape fundamental groups};

\end{tikzpicture}
}
\end{figure}


This enables us to precisely compute the lower-degree terms of the Spence--Kummer equation in both the complex case and the $\ell$-adic Galois case,
where we follow the computational method devised by Nakamura and Wojtkowiak \cite[Proposition 5.11]{NW12}, incorporating Zagier's tensor criterion into the language of fundamental groups \cite[Theorem~5.7]{NW12}.
During our procedure, the dilogarithm functional equations of Schaeffer, Kummer, and Hill types, denoted by (a,b,c-$\bC$) and (a,b,c-$\ell$) in TABLE \ref{funcs}, are also obtained as byproducts.
These equations are used to reduce the ${Li}_2$ terms to derive Spence--Kummer equations (d-$\bC$) and (d-$\ell$).

{\renewcommand{\arraystretch}{1.8}
\tabcolsep = 0.5cm
\small
\begin{table}[hbtp]
\centering
\caption{Functional equations to be proved}
\label{funcs}
  \begin{tabular}{|c||c|}  \hline
    $\ell$-adic Galois side & complex side  \\ \hline \hline

   \tiny$Li_k^\ell(z;\gamma,\sigma) \in \bQ_\ell$ & \tiny $Li_k(z;\gamma) \in \bC$ \\

   \tiny where $z$ is a $K$-rational base point of $\mathbb{P}^1 \backslash \{0,1,\infty\}$ & \tiny where $z$ is a $\bC$-rational base point of $\mathbb{P}^1 \backslash \{0,1,\infty\}$ \\

   \tiny and $(\gamma,\sigma) \in \pi_1^{\rm top}\left(\mathbb{P}^1(\mathbb{C}) \backslash \{0,1,\infty\}; \overrightarrow{01}, z\right) \times G_K$. & \tiny and $\gamma \in \pi_1^{\rm top}\left(\mathbb{P}^1(\mathbb{C}) \backslash \{0,1,\infty\}; \overrightarrow{01}, z\right)$. \\ \hline
    
    \tiny $\rho_{z,\gamma}(\sigma)$,\quad$\rho_{1-z,\gamma'}(\sigma).$ & \tiny $-\log(z;\gamma)$,\quad$-\log(1-z;\gamma').$ \\ \hline \hline

    \tiny$\displaystyle Li_3^{\ell}\left( \dfrac{x(1-y)^2}{y(1-x)^2};\gamma_1,\sigma\right)+Li_3^{\ell}\left( xy ;\gamma_2,\sigma\right)+Li_3^{\ell}\left( \dfrac{x}{y};\gamma_3,\sigma\right)
		$ & \tiny$\displaystyle Li_3\left( \dfrac{x(1-y)^2}{y(1-x)^2};\gamma_1\right)+Li_3\left( xy ;\gamma_2\right)+Li_3\left( \dfrac{x}{y};\gamma_3\right)$ \\ 

     \tiny $-2Li_3^{\ell}\left( \dfrac{x(1-y)}{y(1-x)};\gamma_4,\sigma\right)-2Li_3^{\ell}\left( \dfrac{x(1-y)}{x-1};\gamma_5,\sigma\right)$ & \tiny $-2Li_3\left( \dfrac{x(1-y)}{y(1-x)};\gamma_4\right)-2Li_3\left( \dfrac{x(1-y)}{x-1};\gamma_5\right)$  \\

\tiny $-2Li_3^{\ell}\left( \dfrac{1-y}{1-x};\gamma_6,\sigma\right)-2Li_3^{\ell}\left( \dfrac{1-y}{y(x-1)};\gamma_7,\sigma\right)$ & \tiny $-2Li_3\left( \dfrac{1-y}{1-x};\gamma_6\right)-2Li_3\left( \dfrac{1-y}{y(x-1)};\gamma_7\right)$  \\

\tiny $-2Li_3^{\ell}\left(x;\gamma_8,\sigma\right)-2Li_3^{\ell}\left(y;\gamma_9,\sigma\right)+2{\boldsymbol \zeta}^{\ell}_3(\sigma)$ & \tiny $-2Li_3\left(x;\gamma_8\right)-2Li_3\left(y;\gamma_9\right)+2\zeta(3)$ \\

\tiny $=-\left(\rho_{y,\gamma_9}(\sigma)\right)^2\rho_{\frac{1-y}{1-x},\gamma_6}(\sigma)$ & \tiny $=\log^2(y;\gamma_9)\log\left(\dfrac{1-y}{1-x};\gamma_6\right)$  \\ 

\tiny $+2{\boldsymbol \zeta}^{\ell}_2(\sigma)\rho_{y,\gamma_9}(\sigma)+\dfrac{1}{3}\left(\rho_{y,\gamma_9}(\sigma)\right)^3$ & \tiny $-\dfrac{\pi^2}{3}\log(y;\gamma_9)-\dfrac{1}{3}\log^3(y;\gamma_9).$  \\ 

\tiny $-Li_2^\ell\left(\dfrac{x(1-y)}{x-1};\gamma_5,\sigma\right)-Li_2^\ell\left(\dfrac{1-y}{y(x-1)};\gamma_7,\sigma\right)$ & Theorem \ref{main1}~~(d-$\bC$),  \\

\tiny $+\dfrac{1}{2}\rho_{\frac{1-xy}{1-x},\gamma'_5}(\sigma)
-\dfrac{1}{3}\rho_{y,\gamma_9}(\sigma).$ & Spence~\cite{Sp1809},  \\ 

Theorem \ref{main2}~~(d-$\ell$) & Kummer~\cite{K1840} \\ \hline \hline

   \tiny $Li_{2}^{\ell}\left({\frac{x(1-y)}{y(1-x)};\gamma_4,\sigma}\right)
-Li_{2}^{\ell}\left(y;\gamma_9,\sigma\right)
+Li_{2}^{\ell}\left(x;\gamma_8,\sigma\right)$  & \tiny $Li_{2}\left({\frac{x(1-y)}{y(1-x)};\gamma_4}\right)
-Li_{2}\left(y;\gamma_9\right)
+Li_{2}\left(x;\gamma_8\right)$ \\ 

   \tiny $-Li_{2}^{\ell}\left(\frac{x}{y};\gamma_3,\sigma\right)-Li_{2}^{\ell}\left({\frac{1-y}{1-x};\gamma_6,\sigma}\right)$  & \tiny $-Li_{2}\left(\frac{x}{y};\gamma_3\right)-Li_{2}\left({\frac{1-y}{1-x};\gamma_6}\right).$ \\
   \tiny $=\rho_{y,\gamma_9}(\sigma)\rho_{\frac{1-y}{1-x},\gamma_6}(\sigma)-{\boldsymbol \zeta}_2^{\ell}(\sigma).$  & \tiny $=\log(y;\gamma_9)\log\left(\dfrac{1-y}{1-x};\gamma_6\right)-\dfrac{\pi^2}{6}.$ \\

Theorem \ref{l-dilog}~(a-$\ell$) & Theorem \ref{c-dilog}~(a-$\bC$),~Schaeffer~\cite{Sc1846} \\ \hline

   \tiny $Li_{2}^{\ell}\left({\frac{x(1-y)^2}{y(1-x)^2};\gamma_1,\sigma}\right)
-Li_{2}^{\ell}\left({\frac{x(1-y)}{x-1};\gamma_5,\sigma}\right)$  & \tiny $Li_{2}\left({\frac{x(1-y)^2}{y(1-x)^2};\gamma_1}\right)
-Li_{2}\left({\frac{x(1-y)}{x-1};\gamma_5}\right)$ \\
 
   \tiny $-Li_{2}^{\ell}\left({\frac{1-y}{y(x-1)};\gamma_7,\sigma}\right)-Li_{2}^{\ell}\left({\frac{x(1-y)}{y(1-x)};\gamma_4,\sigma}\right)$  & \tiny $-Li_{2}\left({\frac{1-y}{y(x-1)};\gamma_7}\right)-Li_{2}\left({\frac{x(1-y)}{y(1-x)};\gamma_4}\right)$ \\ 

   \tiny $-Li_{2}^{\ell}\left({\frac{1-y}{1-x};\gamma_6,\sigma}\right)=\frac{1}{2}\left(\rho_{y,\gamma_9}(\sigma)\right)^2$ & \tiny $-Li_{2}\left({\frac{1-y}{1-x};\gamma_6}\right)=\frac{1}{2}\log^2(y;\gamma_9).$ \\ 

   \tiny $+\frac{1}{2}\rho_{y,\gamma_9}(\sigma)+\rho_{1-x,\gamma'_8}(\sigma)-\rho_{1-xy,\gamma'_2}(\sigma).$ & Theorem \ref{c-dilog}~~(b-$\bC$), \\ 

   Theorem \ref{l-dilog}~~(b-$\ell$) & Kummer~\cite{K1840} \\ \hline

   \tiny $Li_{2}^{\ell}\left({\frac{1-y}{y(x-1)};\gamma_7,\sigma}\right)
+Li_{2}^{\ell}\left({xy;\gamma_2,\sigma}\right)
-Li_{2}^{\ell}\left({x;\gamma_8,\sigma}\right)$  & \tiny $Li_{2}\left({\frac{1-y}{y(x-1)};\gamma_7}\right)
+Li_{2}\left({xy;\gamma_2}\right)
-Li_{2}\left({x;\gamma_8}\right)$ \\ 

   \tiny $-Li_{2}^{\ell}\left({y;\gamma_9,\sigma}\right)
-Li_{2}^{\ell}\left({\frac{x(1-y)}{x-1};\gamma_5,\sigma}\right)$  & \tiny $-Li_{2}\left({y;\gamma_9}\right)
-Li_{2}\left({\frac{x(1-y)}{x-1};\gamma_5}\right)$ \\ 

   \tiny $=-{\boldsymbol \zeta}_2^{\ell}(\sigma)
+\rho_{y,\gamma_9}(\sigma)\rho_{\frac{1-y}{1-x},\gamma_6}(\sigma)-\frac{1}{2}\left(\rho_{y,\gamma_9}(\sigma)\right)^2$  & \tiny $=-\dfrac{\pi^2}{6}
+\log(y;\gamma_9)\log\left(\frac{1-y}{1-x};\gamma_6\right)-\frac{1}{2}\log^2(y;\gamma_9).$ \\ 

   \tiny $-\frac{1}{2}\rho_{y,\gamma_9}(\sigma).$ & Theorem \ref{c-dilog}~~(c-$\bC$), \\ 

 Theorem \ref{l-dilog}~~(c-$\ell$) & Hill~\cite{H1830} \\ \hline
  \end{tabular}
\end{table}}

\begin{rem}\label{Zl-integral}
At first glance,
the presence of these lower-weight terms in the $\ell$-adic Spence-Kummer equation (Theorem \ref{main2} (d-$\ell$)) might appear unnatural.
However,
these terms are not artefacts resulting from our proof technique;
rather, they are a natural and geometric consequence arising from the essential difference in the geometric relations of non-commutative variables $X, Y$ and $Z$ between the $\ell$-adic Galois and complex settings:

In the $\ell$-adic Galois setting,
within the generating function of Wojtkowiak's $\ell$-adic iterated integrals,
the non-commutative variables $X := {\bf log}(l_0), Y := {\bf log}(l_1),$ and $Z := {\bf log}(l_\infty)$ (cf. FIGURE \ref{fig:path2}, (\ref{lG-variable}), (\ref{ladicmagnus})) are governed by the non-abelian relation $l_0 l_1 l_\infty = 1$ in the topological fundamental group
\[
\pi_1^{\rm top}\left(\mathbb{P}^1(\mathbb{C}) \backslash \{0,1,\infty\}, \overrightarrow{01}\right)=\left<l_0,
l_1,
l_{\infty} \mid l_0 l_1 l_{\infty}=1 \right>.
\]
Therefore,
$Z$ forms the Baker-Campbell-Hausdorff sum
\begin{align*}
Z = {\bf log}\left({\bf exp}\left(-Y\right){\bf exp}\left(-X\right)\right)
=-Y-X+\text{$($higher-order terms$)$},
\end{align*}
and such a non-linear relation inevitably appears.
The lower-weight terms ($\ell$-adic error terms) appearing in the $\ell$-adic Spence-Kummer equation (Theorem \ref{main2} (d-$\ell$)) are precisely the manifestations of the higher-order terms in this BCH sum (see Remark \ref{imprem1},
Remark \ref{imprem2}
and Remark \ref{imprem3}),
reflecting the non-abelian nature of the pro-$\ell$ \'etale fundamental group
\[
\pi_1^\ellet\left(\mathbb{P}^1_{\overline{K}} \backslash \{0,1,\infty\},\overrightarrow{01}\right).
\]

In contrast,
in the complex setting, let 
$
X:=\left(\dfrac{dz}{z}\right)^{\ast},~
Y:=\left(\dfrac{dz}{z-1}\right)^{\ast}~
\in \Omega^1_{\rm log}\left( \mathbb{P}^1(\mathbb{C}) \backslash \{0,1,\infty\} \right)^{\ast}
$
be the duals of the canonical logarithmic differential forms.
By the natural isomorphism 
\[
\Omega^1_{\rm log}\left( \mathbb{P}^1(\mathbb{C}) \backslash \{0,1,\infty\} \right)^{\ast} \simeq \pi_1^{\rm top}\left(\mathbb{P}^1(\mathbb{C}) \backslash \{0,1,\infty\}, \overrightarrow{01}\right)^{\rm ab} \otimes \mathbb{C},
\]
we have the identifications $X=\frac{\bar{l}_0}{2\pi \sqrt{-1}}$, $Y=\frac{\bar{l}_1}{2\pi \sqrt{-1}}$.
Let $Z \in \Omega^1_{\rm log}\left( \mathbb{P}^1(\mathbb{C}) \backslash \{0,1,\infty\} \right)^{\ast}$ be the element corresponding to $\frac{\bar{l}_\infty}{2\pi \sqrt{-1}}$.
Then the linear relation
\[
Z = -Y - X
\]
holds.
Because of this linearity, the weights of the terms are preserved in functional equations of the complex polylogarithm $Li_k(z)$.

Consequently,
due to these fundamental differences,
the complex setting yields pure-weight functional equations,
whereas the $\ell$-adic Galois setting gives rise to mixed-weight functional equations (see FIGURE \ref{fig:remark0.3}).
\end{rem}

\begin{figure}[htbp]
\caption{Fundamental structural differences between the two sides}
\label{fig:remark0.3}
\centering
\vspace{0.2cm}
\begin{tikzpicture}[
    box/.style={draw=black, thick, rounded corners=4pt, text width=6.6cm, minimum height=1.5cm, align=center},
    arrow/.style={-Stealth, thick},
    darrow/.style={<->, dashed, thick},
    label/.style={font=\footnotesize\itshape, fill=white, inner sep=2pt} 
]

\node[box] (ladic_origin) {Geometric origin:\\[1ex] The action of $G_K$ on\\[1ex] $\pi_1^{\ellet}\left(\mathbb{P}^1_{\overline{K}} \setminus \{0,1,\infty\}; \overrightarrow{01}, z\right)$};

\node[box, below=1.5cm of ladic_origin] (ladic_var) {Non-commutative variables\\[1ex] $X, Y$\\[1ex] correspond to generators\\[1ex] $l_0, l_1$ of the fundamental group\\[1ex]$\pi_1\left(\mathbb{P}^1 \setminus \{0,1,\infty\}\right)$};

\node[box, below=1.3cm of ladic_var] (ladic_rel) {Non-linear relation\\[1ex] ${\bf exp}(X){\bf exp}(Y){\bf exp}(Z)=1$\\[1ex] reflecting the Betti nature i.e.,\\[1ex]the topological relation $l_0 l_1 l_{\infty} = 1$\\[1ex]in ~$\pi_1\left(\mathbb{P}^1 \setminus \{0,1,\infty\}\right)$};

\node[box, below=1.28cm of ladic_rel] (ladic_eq) {Mixed weight \\ functional equations};

\node[box, right=1.5cm of ladic_origin] (comp_origin) {Geometric origin:\\[1ex] The formal KZ equation\\[1ex] $\dfrac{d}{d z}G(z)=\left( \dfrac{X}{z}+\dfrac{Y}{z-1} \right)G(z)$};

\node[box, below=1cm of comp_origin] (comp_var) {Non-commutative variables\\[1ex] $X, Y$\\[1ex] correspond to duals of the \\ differential forms\\[1ex] $\left(\dfrac{dz}{z}\right)^{\ast}, \left(\dfrac{dz}{z-1}\right)^{\ast}$\\[1ex] {\small or generators $\overline{l}_0, \overline{l}_1$ of\\[1ex]$\pi_1^\ab\left(\mathbb{P}^1 \setminus \{0,1,\infty\}\right)$}};

\node[box, below=1cm of comp_var] (comp_rel) {Linear relation\\[1ex] $X+Y+Z=0$\\[1ex] reflecting the de Rham nature\\[1ex] i.e., the residue theorem};

\node[box, below=1.2cm of comp_rel] (comp_eq) {Pure weight \\ functional equations};

\node[font=\large\bfseries, above=0.3cm of ladic_origin] (ladic) {$\ell$-adic Galois (\'Etale) Side};
\node[font=\large\bfseries, above=0.2cm of comp_origin] (complex) {Complex (Hodge) Side};

\draw[arrow] (ladic_origin) -- (ladic_var);
\draw[arrow] (ladic_var) -- (ladic_rel);
\draw[arrow] (ladic_rel) -- node[right, font=\small] {yields} (ladic_eq);

\draw[arrow] (comp_origin) -- (comp_var);
\draw[arrow] (comp_var) -- (comp_rel);
\draw[arrow] (comp_rel) -- node[right, font=\small] {yields} (comp_eq);


\end{tikzpicture}
\end{figure}

\begin{rem}\label{arithm}
Furthermore,
it should be emphasized that these lower-weight terms in the $\ell$-adic Spence-Kummer equation (Theorem \ref{main2} (d-$\ell$)) are not merely theoretical inevitabilities, but also possess essential arithmetic significance.
In Remark \ref{Zl-test},
by taking
\[-\frac{2}{3}\rho_{y,\gamma_9}(\sigma) + \frac{2}{3}\rho_{y,\gamma_9}(\sigma)^3\]
appearing on the right-hand side of the integral $\ell$-adic Spence-Kummer equation (d'-$\ell$) in this paper as an example,
we demonstrate that the inseparable intertwining of this lower-weight term and the matching full-weight term is indispensable for guaranteeing the $\Z_\ell$-integrality of the functional equation.
The presence of these lower-weight terms does not indicate any unnatural feature of the definition of the $\ell$-adic Galois polylogarithm $Li_k^\ell(z)$, but rather it is evidence that the desirable $\Z_\ell$-integral arithmetic information specific to the $\ell$-adic Galois setting is fully captured.
\end{rem}

\begin{rem}\label{rem-non-Fano} 
The name ``non-Fano'' is used for the arrangement
$s_1 s_2(1-s_1)(1-s_2)(s_1-s_2)(1-s_1 s_2)$
because of the non-Fano matroid, named after Gino Fano.
Through the change of variables $s'_1=\frac{1}{1-s_1}$ and $s'_2=\frac{1}{1-s_2}$,
the variety $V_{\rm non\text{-}Fano}$ is isomorphic to the complement of the affine line arrangement
$s'_1 s'_2 (1-s'_1)(1-s'_2)(s'_1-s'_2)(s'_1+s'_2-1)$.
This affine line arrangement is the decone
of the realization of the non-Fano matroid $F_7^{-}$ (cf. \cite[Figure 1.15(a)]{O11}, \cite[Example 10.5]{Su01}).
\end{rem}

\begin{rem}\label{gon}
The algebraic variety $V_{\rm non\text{-}Fano}$ has the following moduli interpretation.
In \cite{Go91}, \cite{Go95}, \cite{Go00},
Goncharov considered the space
\begin{align}
\mathscr{M}_{G}:=\text{the moduli space of certain ordered $7$-point configurations on $\mathbb{P}^2_{{\overline{K}}}$ depicted in FIGURE \ref{confgon}}
\end{align}
to derive the Spence--Kummer functional equation for the real-valued trilogarithm.
In fact,
$\mathscr{M}_{G}$ is isomorphic to $V_{\rm non\text{-}Fano}$, as follows:
Since no three of the four points $x_1, x_2, x_3, z$ are collinear,
there is a unique projective transformation sending the standard frame, namely, $[1 : 0 : 0]$, $[0 : 1 : 0]$, $[0 : 0 : 1]$ and $[1 : 1 : 1]$, to $x_1$, $x_2$, $x_3$ and $z$, respectively.
By this projective transformation,
$y_2$ is transferred to $[0 : 1 : 1]$,
and the images of $y_1$ and $y_3$ are denoted as
$[1 : u_1 : 0]$ and $[1 : 0 : u_2]$, respectively.
Then,
$\mathscr{M}_{G}$ is identified with the affine variety
\begin{align}
\Spec\left({\overline{K}}\left[u_1,u_2,\frac{1}{u_1 u_2(1-u_1)(1-u_2)(u_1+u_2)(u_1 u_2-u_1-u_2)}\right]\right)
\end{align}
by sending
$[(x_1,x_2,x_3,y_1,y_2,y_3,z)] \mapsto (u_1,u_2)$.
This affine variety is isomorphic to $V_{\rm non\text{-}Fano}$ by
\begin{align}
u_1=s_1, \quad u_2=f_5(s_1,s_2)\left(=\frac{s_1(1-s_2)}{s_1-1}\right).
\end{align}
\end{rem}

\begin{figure}[hbtp]
\centering
\caption{Goncharov's $7$-point configuration on $\mathbb{P}^2$}
\label{confgon}
\begin{tikzpicture}[scale=1]
\node (emp) at (0, 4.5) {};
\draw[-] (-4,0) to (3,0);
\draw[-] (1,4) to (-4,-1);
\draw[-] (-1,4) to (3,-1);
\draw[-] (0,4) to (-0.5,-1);
\draw[dotted] (-4,-0.4) to (2,2);
\draw[dotted] (3,-0.36) to (-2,1.8);
\node (b1) at (-0.4, 0) {$\bullet$};
\node (b2) at (-0.1, 2.9) {$\bullet$};
\node (b3) at (2.2, 0) {$\bullet$};
\node (b4) at (-3, 0) {$\bullet$};
\node (b6) at (-0.3, 1.07) {$\bullet$};
\node (b7) at (1.5, 0.85) {$\bullet$};
\node (b5) at (-1, 2) {$\bullet$};
\node (pb1) at (-0.7, -0.3) {$y_2$};
\node (pb2) at (0.4, 2.9) {$x_1$};
\node (pb3) at (1.9, -0.3) {$x_3$};
\node (pb4) at (-3.3, 0.3) {$x_2$};
\node (pb6) at (0.1, 1.1) {$z$};
\node (pb7) at (1.8, 1.2) {$y_3$};
\node (pb5) at (-1.3, 2.2) {$y_1$};
\end{tikzpicture}
\end{figure}

\begin{rem}\label{speci}
The trilogarithm satisfies a 22-term functional equation in three variables discovered by Goncharov \cite{Go91}, which is conjectured to generate all functional equations for the trilogarithm. Suitably specializing one of these variables reduces the Goncharov 22-term functional equation to the Spence--Kummer 9-term functional equation,
which is expected to be a strictly weaker identity.
Nevertheless, the algebraic variety $V_{\rm non\text{-}Fano}$ exhibits rich symmetries that yield numerous functional equations for trilogarithms arising as specializations of the Spence--Kummer equation. For example, combining the $x=y$ boundary specialization of this equation with the inversion formula for polylogarithms recovers a particular case of the distribution formula \cite{NW20a}. Furthermore, applying the change of variables $x=YZ$ and $y=-Y/Z$ produces the Newman equation \cite{Ne1892}, \cite[(6.131)]{Lew81}. 
Moreover, combining the Spence-Kummer equation specialized to the boundary $x=0$ with the inversion formula for polylogarithms leads to the derivation of the Landen equation for trilogarithms (refer to Remark \ref{landen} for a detailed discussion). 
\end{rem}

\begin{figure}[htbp]
\centering
\caption{Relationships among functional equations and their underlying geometry}
\label{fig:trilogeq}
\vspace{0.3cm}
\resizebox{\linewidth}{!}{%
\begin{tikzpicture}[
  eqbox/.style={
    draw=eqblueborder, fill=eqbluefill, rounded corners=7pt,
    line width=1.2pt, text width=#1, align=center, inner sep=6pt, font=\small
  }, eqbox/.default=4.4cm,
  geobox/.style={
    draw=geoborder, fill=geofill, rounded corners=5pt,
    line width=0.9pt, text width=#1, align=center, inner sep=5pt, font=\footnotesize
  }, geobox/.default=4.4cm,
  darr/.style={->, >=Stealth, line width=1.3pt, color=arrowblue},
  carr/.style={->, >=Stealth, line width=1.1pt, dashed, color=arrowred,
    dash pattern=on 5pt off 3pt},
  garr/.style={->, >=Stealth, line width=2.5pt, color=arrowgold}
]


\node[eqbox=6.8cm, fill=eqbluefill!60!white, line width=1.8pt, inner sep=5pt] (G) at (-3, 0.5) {%
  \textbf{Goncharov's 22-term}\\
  \textbf{trilog. functional equation}\\[1pt]
  {\footnotesize (3 variables) \cite{Go91}}%
};
\node[geobox=6.8cm, anchor=north, line width=1.8pt, inner sep=5pt] (Ggeo) at (G.south) {%
  {\color{geotext}\itshape underlying geometry:}\\[2pt]
  the moduli space of special ordered 7-point configurations on $\mathbb{P}^2$ (cf. \cite[FIGURE 1]{Go91})%
};

\node[eqbox=6cm, fill=eqbluefill!60!white, line width=1.8pt, inner sep=5pt] (SC) at (5.0, 0.5) {%
  \textbf{Schaeffer's 5-term}\\
  \textbf{dilog. functional equation}\\[1pt]
  {\footnotesize (2 variables) \cite{Sc1846}, Theorem \ref{c-dilog}~(a-$\bC$), Theorem \ref{l-dilog}~(a-$\ell$)}%
};
\node[geobox=6cm, anchor=north, line width=1.8pt, inner sep=5pt] (SCgeo) at (SC.south) {%
  {\color{geotext}\itshape underlying geometry:}\\[2pt]
  The complement of $s_1 s_2(1-s_1)(1-s_2)(s_1-s_2)=0$,
i.e., the moduli space $M_{0,5}=\Conf_2\left({\bP}^{1} \backslash \{0,1,\infty\}\right)$%
};

\node[eqbox=5.5cm] (KK) at (7.5, -4.3) {%
  \textbf{Kummer's 5-term}\\
  \textbf{dilog. functional equation}\\[1pt]
  {\footnotesize (2 variables) \cite{K1840}, Theorem \ref{c-dilog}~(b-$\bC$), Theorem \ref{l-dilog}~(b-$\ell$)}%
};
\node[geobox=5.5cm, anchor=north] (KKgeo) at (KK.south) {%
  {\color{geotext}\itshape underlying geometry:}\\[2pt]
  $V_{\mathrm{non\text{-}Fano}}$%
};

\node[eqbox=5.5cm] (H) at (7.5, -8.3) {%
  \textbf{Hill's 5-term}\\
  \textbf{dilog. functional equation}\\[1pt]
  {\footnotesize (2 variables) \cite{H1830}, Theorem \ref{c-dilog}~(c-$\bC$), Theorem \ref{l-dilog}~(c-$\ell$)}%
};
\node[geobox=5.5cm, anchor=north] (Hgeo) at (H.south) {%
  {\color{geotext}\itshape underlying geometry:}\\[2pt]
  The complement of $s_1 s_2(1-s_1)(1-s_2)(1-s_1 s_2)=0$%
};

\node[eqbox=6.8cm, fill=eqbluefill!60!white, line width=1.8pt, inner sep=10pt]
  (SK) at (-3, -4.5) {%
  \Large\textbf{Spence--Kummer's 9-term}\\
  \Large\textbf{trilog. functional equation}\\[3pt]
  {\normalsize (2 variables) \cite{Sp1809}, \cite{K1840}, Theorem \ref{main1}~(d-$\bC$), Theorem \ref{main2}~(d-$\ell$)}%
};
\node[geobox=6.8cm, anchor=north, line width=1.2pt]
  (SKgeo) at (-3, -6.5) {%
  {\color{geotext}\itshape underlying geometry:}\\[3pt]
  {\Large\textbf{$V_{\mathrm{non\text{-}Fano}}$}}\\[3pt]
  {\large\color{geotext}\bfseries
    Rich symmetries that lead to many formulas}%
};

\draw[garr] (SKgeo.north) -- (SK.south);

\node[eqbox] (N) at (-4.4, -10.9) {%
  \textbf{Newman's equation}\\
  \textbf{for trilogarithms}\\[1pt]
  {\footnotesize (3 variables~$+$~1 relation) \cite{Ne1892}}%
};
\node[geobox, anchor=north] (Ngeo) at (N.south) {%
  {\color{geotext}\itshape underlying geometry:}\\[2pt]
  $X+Y+Z=XYZ$%
};

\node[eqbox] (D) at (-2.3, -14.7) {%
  \textbf{Distribution formula}\\
  \textbf{for trilogarithms}\\[1pt]
  {\footnotesize (1 variable) \cite{NW20a}}%
};
\node[geobox, anchor=north] (Dgeo) at (D.south) {%
  {\color{geotext}\itshape underlying geometry:}\\[2pt]
  $\mathbb{P}^1\!\setminus\!\{0,1,-1,\infty\}$\\[4pt]
  {\scriptsize covering space:\enspace$z\mapsto z^2$}%
};

\node[eqbox] (L) at (8, -12.9) {%
  \textbf{Landen's equation}\\
  \textbf{for trilogarithms}\\[1pt]
  {\footnotesize (1 variable) \cite{L1780}, \cite{NS25}, Remark \ref{landen}}%
};
\node[geobox, anchor=north] (Lgeo) at (L.south) {%
  {\color{geotext}\itshape underlying geometry:}\\[2pt]
  $\mathbb{P}^1\!\setminus\!\{0,1,\infty\}$\\[4pt]
  \tikz[baseline=-0.3ex, scale=0.6]{
    \node[inner sep=1pt] (z) at (-1.6,0)   {\scriptsize $0$};
    \node[inner sep=1pt] (o) at (0,0) {\scriptsize $1$};
    \draw[<->, >=Stealth, blue!80, semithick] (z) to[bend left=40] (o);
    \node[inner sep=1pt] (o2) at (1.6,0) {\scriptsize $\infty$};
    \draw[<->, >=Stealth, red!80, semithick] (o) to[bend left=40] (o2);
  }\\[2pt]
  {\scriptsize sym:\enspace$z\mapsto 1-z$ and $z\mapsto\dfrac{z}{z-1}$}%
};

\node[eqbox=3.8cm] (I) at (3, -14.2) {%
  \textbf{Inversion formula}\\
  \textbf{for trilogarithms}\\[1pt]
  {\footnotesize (1 variable) \cite{NW12}}%
};
\node[geobox=3.8cm, anchor=north] (Igeo) at (I.south) {%
  {\color{geotext}\itshape underlying geometry:}\\[2pt]
  $\mathbb{P}^1\!\setminus\!\{0,1,\infty\}$\\[4pt]
  \tikz[baseline=-0.3ex, scale=0.6]{
    \node[inner sep=1pt] (z) at (0.4,0)   {\scriptsize $0$};
    \node[inner sep=1pt] (o) at (2,0) {\scriptsize $\infty$};
    \draw[<->, >=Stealth, purple!80, semithick] (z) to[bend left=40] (o);
  }\\[2pt]
  {\scriptsize sym:\enspace$z\mapsto \frac{1}{z}$}%
};

\begin{scope}[on background layer]
  \node[fill=bgcard, rounded corners=10pt, inner sep=4pt, fit=(G)(Ggeo)] {};
  \node[fill=bgcard, rounded corners=10pt, inner sep=4pt, fit=(SK)(SKgeo)] {};
  \node[fill=bgcard, rounded corners=10pt, inner sep=4pt, fit=(N)(Ngeo)] {};
  \node[fill=bgcard, rounded corners=10pt, inner sep=4pt, fit=(L)(Lgeo)] {};
  \node[fill=bgcard, rounded corners=10pt, inner sep=4pt, fit=(I)(Igeo)] {};
  \node[fill=bgcard, rounded corners=10pt, inner sep=4pt, fit=(D)(Dgeo)] {};
  \node[fill=bgcard, rounded corners=10pt, inner sep=4pt, fit=(SC)(SCgeo)] {};
  \node[fill=bgcard, rounded corners=10pt, inner sep=4pt, fit=(KK)(KKgeo)] {};
  \node[fill=bgcard, rounded corners=10pt, inner sep=4pt, fit=(H)(Hgeo)] {};
\end{scope}


\draw[darr] (Ggeo.south) --
  node[right=5pt, font=\scriptsize, black, align=left]{%
    specialization of\\[-1pt]one variable%
  } (SK.north);

\draw[darr] (SKgeo.south) to[]
  node[left=5pt, font=\scriptsize, black, align=left, pos=0.55]{%
    $x = YZ,\enspace y = -Y/Z$%
  } (N.north);

\node[circle, draw=arrowred, fill=white, inner sep=1.5pt, line width=1pt]
  (Merge2) at (-1.2, -11) {\tiny\bfseries$+$};

\draw[darr, line width=1.5pt] (SKgeo.south) to[out=330, in=90]
  node[right=5pt, font=\scriptsize, black, align=left, pos=0.7]{$x = y$}
  (Merge2);
\draw[carr, line width=1.5pt] (Merge2) to[out=-90, in=35]
  node[right=5pt, font=\scriptsize, black, align=left, pos=0.5]{}
  (D.north);

\node[circle, draw=arrowred, fill=white, inner sep=1.5pt, line width=1pt]
  (Merge) at (2.0, -11) {\tiny\bfseries$+$};

\draw[darr] (SKgeo.south) to[out=355, in=135]
  node[right=5pt, font=\scriptsize, black, align=left, pos=0.7]{$x \to 0$}
  (Merge);

\draw[carr, line width=1.5pt] (I.north) -- (Merge);

\draw[carr, line width=1.5pt] (I.north) -- (Merge2);

\draw[carr, line width=1.5pt] (Merge) -- (L.west);

\draw[garr] (SKgeo.east) -- (SCgeo);

\draw[garr] (SKgeo.east) -- (KK.west);

\draw[garr] (SKgeo.east) -- (H.west);

\draw[rulecol, thin] (-4.5, -17.5) -- (6.5, -17.5);

\node[anchor=north, font=\scriptsize, align=left, inner sep=0pt]
  at (1.0, -17.85) {%
  \begin{tabular}{@{}l@{\enspace}l@{}}
    \tikz[baseline=-0.6ex]{\draw[darr, line width=1.0pt](0,0)--(1.3cm,0);}
    & derivation (specialization\,/\,change of variables) \\[5pt]
    \tikz[baseline=-0.6ex]{\draw[carr, line width=1.0pt](0,0)--(1.3cm,0);}
    & derivation via combination with inversion formula \\[5pt]
    \tikz[baseline=-0.6ex]{\draw[garr](0,0)--(1.1cm,0);}
    & \textbf{geometric derivation using $V_{\mathrm{non\text{-}Fano}}$
        performed in this paper}
  \end{tabular}%
};

\end{tikzpicture}%
}
\end{figure}

\begin{rem}\label{history}
Historically,
the study of the functional equations of $Li_3$ was initiated in the late 18th century by Landen \cite{L1780} and others (cf. \cite[Chapter 6]{Lew81}).
Modern treatments of polylogarithms have been presented by
Zagier \cite{Z91}, Goncharov \cite{Go91}, Wojtkowiak \cite{W91}, Beilinson-Deligne \cite{BD94}, Gangl \cite{Ga03} and others
since the last decade of the twentieth century,
where the Bloch--Wigner--Ramakrishnan polylogarithms are recognized as the 
main components of 
the regulator maps in motivic cohomology theory.
Relations between the Spence--Kummer equation and the non-Fano arrangement 
are noted in more recent works of the twenty-first century (cf. \cite{Pe12},
\cite{Pi05},
\cite{Pi21},
\cite{Pi22},
\cite{R02}) in the context of web geometry and cluster algebras.
\end{rem}

This paper is organized as follows.
The notations used in this paper are described in Section 1.
In Section 2, we apply the Galois theory for $M_{0,5}$ to compute the fundamental groups of ${V}_{\rm B_3}$ and $V_{\rm non\text{-}Fano}$.
This plays an important role in the analysis of lower-degree terms of polylogarithm generating series.
In Section 3,
we review
the basic properties of the complex and $\ell$-adic Galois polylogarithms.
Finally, in Section 4, we prove our main theorems.
Using the geometry of $V_{\rm non\text{-}Fano}$ and the algebraic relations among the polylogarithm generating series, 
we derive the Spence--Kummer trilogarithm functional equation (d-$\bC$) together with the dilogarithm functional equations of Schaeffer's, Kummer's, and Hill's types (a,b,c-$\bC$)
in a geometric and algebraic manner based on \cite{NW12}.
The proofs of their $\ell$-adic Galois analogues (a,b,c,d-$\ell$) are given in parallel with the proofs for the complex case.
We also perform $\Z_\ell$-integrality tests for the functional equations (a,b,c,d-$\ell$) in terms of the generalized $\ell$-adic Soul\'e character.

\section*{Acknowledgements}\noindent
The author would like to express deep gratitude to Hiroaki Nakamura for his warm encouragement, suggestions,
and generous advice.
The author would also like to express sincere thanks to Hiroyuki Ogawa for providing access to the computer algebra system Magma, which was necessary for the computation of the fundamental groups in Section 2,
and for his helpful advice on using Magma.
This work was supported by JSPS KAKENHI Grant Numbers JP20J11018, JP24K22840 and JP26K16966.

\section{Notation}\label{Notations}
Throughout the present paper,
we fix a prime number $\ell$
and a subfield $K$ of $\mathbb{C}$.
Let $\overline{K}$ be the algebraic closure of $K$ in $\mathbb{C}$ and
$G_K:={\rm{Gal}}(\overline{K}/K)$ be the absolute Galois group of $K$.
For an algebraic variety $V$ over $\overline{K}$,
we denote by $V^{\mathrm{an}}$ the complex analytic space associated to $V$ via the inclusion $\overline{K} \hookrightarrow \mathbb{C}$.
Moreover,
given a morphism $f: V \to W$ of algebraic varieties over $\overline{K}$,
we denote by $f^{\mathrm{an}}: V^{\mathrm{an}} \to W^{\mathrm{an}}$ the induced morphism between the associated complex analytic spaces.

\subsection{Spaces}
First, we introduce the spaces that are used in the present paper.
Let 
\begin{eqnarray}
{\bP}^{1}_{\overline{K}} \backslash \{0,1,\infty\} = \Spec\left(\overline{K}\left[t,\frac{1}{t(1-t)}\right]\right)
\end{eqnarray}
be the projective line minus three points over $\overline{K}$,
and let
\begin{eqnarray}
{M_{0,5}} = \left\{(a_1,a_2,a_3,a_4,a_5) \in {\left(\bP^1_{\overline{K}}\right)}^5 \mid a_i \neq a_j~(i \neq j)\right\}/{{\rm PGL}(2,\overline{K})}
\end{eqnarray}
be the moduli space of ${\bP}^{1}_{\overline{K}}$ with five ordered points.
Hereafter,
we identify ${M_{0,5}}$ with the second configuration space $\Conf_2\left({\bP}^{1}_{\overline{K}} \backslash \{0,1,\infty\}\right)$ by sending
\begin{eqnarray}
[(a_1,a_2,a_3,a_4,a_5)] = [(1,t_1,t_2,0,\infty)] \mapsto (t_1,t_2).
\end{eqnarray}
Thus, ${M_{0,5}}$ can be regarded as the complement to the Braid arrangement $t_1 t_2 (1-t_1)(1-t_2)(t_1-t_2)$:
\begin{eqnarray}\label{M05}
{M_{0,5}} &=& \Conf_2\left({\bP}^{1}_{\overline{K}} \backslash \{0,1,\infty\}\right) \\
&=& \Spec\left({\overline{K}}\left[t_1,t_2,\frac{1}{t_1 t_2 (1-t_1)(1-t_2)(t_1-t_2)}\right]\right).\notag
\end{eqnarray}
We write
\begin{eqnarray}
{V}_{\rm B_3} &=& \Spec\left({\overline{K}}\left[s_1,s_2,\frac{1}{s_1 s_2(1-s_1^2)(1-s_2^2)(s_1-s_2)(1-s_1 s_2)}\right]\right)
\end{eqnarray}
for the complement to the arrangement $s_1 s_2(1-s_1^2)(1-s_2^2)(s_1-s_2)(1-s_1 s_2)$.
Through the change of variables $s'_1=\frac{1-s_1}{1+s_1}$ and $s'_2=\frac{1-s_2}{1+s_2}$,
the affine variety ${V}_{\rm B_3}$ is shown to be isomorphic to the complement of the Coxeter arrangement $s'_1 s'_2(1-{s'}_1^2)(1-{s'}_2^2)(s'_1-s'_2)(s'_1+s'_2)$ of type ${\rm B_3}$.
Then,
\begin{eqnarray}\label{fcov}
f_{\rm cov}: {V}_{\rm B_3} \to {M_{0,5}},\quad (s_1,s_2) \mapsto \left({\left(\frac{1-s_1}{1+s_1}\right)}^2,{\left(\frac{1-s_2}{1+s_2}\right)}^2\right)=(t_1,t_2)
\end{eqnarray}
is a finite \'etale Galois covering space with Galois group $\bZ/2\bZ \times \bZ/2\bZ$.
There is a natural open immersion from ${V}_{\rm B_3}$ to $V_{\rm non\text{-}Fano}$:
\begin{eqnarray}
{V}_{\rm B_3} \hookrightarrow
V_{\rm non\text{-}Fano}.
\end{eqnarray}
Through this open immersion,
we regard the points and \'etale paths on ${V}_{\rm B_3}$ as those on $V_{\rm non\text{-}Fano}$.

\begin{figure}[hbtp]
\centering
\caption{Line arrangements appearing in the present paper}
\label{fig:arr}
\begin{tikzpicture}[scale=0.7]
\draw[-] (-2.5,1.5) to (2.5,1.5);
\draw[-] (-2.5,0.5) to (2.5,0.5);
\draw[-] (-2.5,-0.5) to (2.5,-0.5);
\draw[-] (-2.5,-1.5) to (2.5,-1.5);
\draw[-] (1.5,-2.5) to (1.5,2.5);
\draw[-] (0.5,-2.5) to (0.5,2.5);
\draw[-] (-0.5,-2.5) to (-0.5,2.5);
\draw[-] (-1.5,-2.5) to (-1.5,2.5);
\draw[-] (-0.5,1.5) to (1.5,-0.5);
\draw[-] (1.5,-0.5) to  [out=-45,in=170] (2.5,-1);
\draw[-] (-1,2.5) to  [out=-80,in=135] (-0.5,1.5);
\draw[-] (-2.5,-1) to  [out=-10,in=135] (-1.5,-1.5);
\draw[-] (-1.5,-1.5) to  [out=-45,in=100] (-1,-2.5);
\draw[-] (2.5,2.5) to (-2.5,-2.5);
\node (c-1) at (-1.5, -2.7) {\tiny $-1$};
\node (c0) at (-0.5, -2.7) {\tiny $0$};
\node (c1) at (0.5, -2.7) {\tiny $1$};
\node (cinf) at (1.5, -2.7) {\tiny $\infty$};
\node (cc-1) at (-2.8,-1.5) {\tiny $-1$};
\node (cc0) at (-2.7,-0.5) {\tiny $0$};
\node (cc1) at (-2.7,0.5) {\tiny $1$};
\node (ccinf) at (-2.7,1.5) {\tiny $\infty$};
\node (cL1) at (-2.7,-2.7) {\tiny $s_1=s_2$};
\node (cL2) at (-3.35,-1) {\tiny $s_1 s_2=1$};
\node (cL3) at (3.35,-1) {\tiny $s_1 s_2=1$};

\draw[-] (-5,1) to (-9,1);
\draw[-] (-5,0) to (-9,0);
\draw[-] (-5,-1) to (-9,-1);
\draw[-] (-8,-2) to (-8,2);
\draw[-] (-7,-2) to (-7,2);
\draw[-] (-6,-2) to (-6,2);
\draw[-] (-5,2) to (-9,-2);
\draw[-] (-5,-2) to (-9,2);
\node (a0) at (-8, -2.2) {\tiny $0$};
\node (a1) at (-7, -2.2) {\tiny $1$};
\node (ainf) at (-6, -2.2) {\tiny $\infty$};
\node (aa0) at (-9.2, -1) {\tiny $0$};
\node (aa1) at (-9.2, 0) {\tiny $1$};
\node (aainf) at (-9.2, 1) {\tiny $\infty$};
\node (cL1) at (-9,-2.2) {\tiny $s_1=s_2$};
\node (cL1) at (-4.7,-2.2) {\tiny $1=s_1 s_2$};

\node () at (0,2.7) {};

\draw[-] (5,1) to (9,1);
\draw[-] (5,0) to (9,0);
\draw[-] (5,-1) to (9,-1);
\draw[-] (6,-2) to (6,2);
\draw[-] (7,-2) to (7,2);
\draw[-] (8,-2) to (8,2);
\draw[-] (5,-2) to (9,2);
\node (bL1) at (5,-2.2) {\tiny $t_1=t_2$};
\node (b0) at (6,-2.2) {\tiny $0$};
\node (b1) at (7,-2.2) {\tiny $1$};
\node (binf) at (8,-2.2) {\tiny $\infty$};
\node (b00) at (9.2,-1) {\tiny $0$};
\node (b11) at (9.2,0) {\tiny $1$};
\node (binfinf) at (9.2,1) {\tiny $\infty$};

\node (text1) at (-7, -3.5) {Non-Fano arrangement};
\node (text2) at (0, -3.5) {${\rm B_3}$ {arrangement}};
\node (text3) at (7, -3.5) {Braid arrangement};
\end{tikzpicture}
\end{figure}

\begin{rem} 
We note that a covering space isomorphic to (\ref{fcov}) is discussed in \cite[3.6.2]{M97}.
\end{rem} 

\begin{rem} 
The spaces
$V_{\rm non\text{-}Fano}^\an$,
$V_{\rm B_3}^\an$,
and $M_{0,5}^{\an}$
are derived from complex arrangements called the {non-Fano arrangement},
the ${\rm B_3}$ {arrangement},
and the {Braid arrangement}, respectively. 
More precisely,
the decones of what are called the Braid arrangement, 
the ${\rm B_3}$ arrangement, and the non-Fano arrangement in \cite{Su01}, with certain changes in variables, are called the Braid arrangement, the ${\rm B_3}$ arrangement, and the non-Fano arrangement in this paper.
\end{rem}

\subsection{Base points}
We fix the tangential base points of these spaces.
We write $K((t))$ for the field of Laurent power series over $K$.
Let
\begin{eqnarray}\label{tanbv3}
\vec{v}:\Spec\Big({K}((t))\Big) \to {V}_{\rm B_3}
\end{eqnarray}
be the $K$-rational tangential base point of ${V}_{\rm B_3}$ over the $K(t)$-rational point $(t^2,t)$.
Composing $\vec{v}:\Spec\Big({K}((t))\Big) \to {V}_{\rm B_3}$ with the open immersion ${V}_{\rm B_3} \hookrightarrow V_{\rm non\text{-}Fano}$,
we obtain the $K$-rational tangential base point of $V_{\rm non\text{-}Fano}$, which is also denoted by $\vec{v}$:
\begin{eqnarray}\label{tanbv}
\vec{v}:\Spec\Big({K}((t))\Big) \to V_{\rm non\text{-}Fano}.
\end{eqnarray}
The image of $\vec{v}:\Spec\Big({K}((t))\Big) \to V_{\rm non\text{-}Fano}$ under ${\rm pr}_2\left(=f_9\right): V_{\rm non\text{-}Fano} \to {\bP}^{1}_{\overline{K}} \backslash \{0,1,\infty\}, (s_1,s_2) \mapsto s_2$
is the standard $K$-rational tangential base point 
$
\overrightarrow{01}: \Spec\Big({K}((t))\Big) \to {\bP}^{1}_{\overline{K}} \backslash \{0,1,\infty\}.
$
We set
\begin{eqnarray}
(\tau_1,\tau_2):=f_{\rm cov}(t^2,t)=\left(\left(\frac{1-t^2}{1+t^2}\right)^2,\left(\frac{1-t}{1+t}\right)^2\right) \in M_{0,5}
\end{eqnarray}
and write
\begin{eqnarray}
\vec{\tau}: \Spec\Big({K}((t))\Big) \to M_{0,5}
\end{eqnarray}
for the $K$-rational tangential base point of $M_{0,5}$ over the $K(t)$-rational point $(\tau_1,\tau_2)$.
\begin{center}
\begin{tikzpicture}[scale=0.7]
\node (v) at (0, 2.5) {$\Spec\Big({K}((t))\Big)$};
\node (VB3) at (0, 0) {$\left({V}_{\rm B_3},(t^2,t) \right)$};
\node (Conf) at (6, 0) {$\left(M_{0,5},(\tau_1,\tau_2)\right).$};
\node (VnF) at (-6, 0) {$\left(V_{\rm non\text{-}Fano},(t^2,t)\right)$};
\node (P) at (-12, 0) {$\left({\bP}^{1}_{\overline{K}} \backslash \{0,1,\infty\},t\right)$};
\node (f9) at (-8.8, -0.4) {\scriptsize ${\rm pr}_2$};
\node (phi) at (2.9, -0.4) {\scriptsize $f_{\rm cov}$};
\draw[left hook ->] (VB3) to (VnF);
\draw[->>] (VB3) to (Conf);
\draw[->] (v) to (VB3);
\draw[->] (VnF) to (P);
\draw[->] (v) to (P);
\draw[->] (v) to (VnF);
\draw[->] (v) to (Conf);
\node (empty) at (0.4, 1.25) {$\vec{v}$};
\node (empty) at (-2.7, 1) {$\vec{v}$};
\node (empty) at (3.3, 1.5) {$\vec{\tau}$};
\node (empty) at (-6, 1.8) {$\overrightarrow{01}$};
\end{tikzpicture}
\end{center}
When discussing the complex case,
we regard these tangential base points of the algebraic varieties as those of the associated complex analytic spaces via the embedding $\overline{K} \hookrightarrow \mathbb{C}$.

\subsection{Fundamental groups}
Next, we establish some notation concerning fundamental groups.
For an algebraic variety $V$ over ${\overline{K}}$ and two (possibly tangential) base points
$\ast,
\ast'$ of $V$,
we write
\[
\pi_1^\ellet(V;\ast,\ast')
\]
for the pro-$\ell$ set of pro-$\ell$ \'etale paths on $V$ from $\ast$ to $\ast'$.
Let $V^{\an}:=V(\bC)$ be the associated analytic space of $V$.
For two (possibly tangential) base points
$\ast,
\ast'$ of $V^{\an}$,
we write
\[
\pi_1^{\rm top}(V^{\an};\ast,\ast')
\]
for the set of homotopy classes of topological paths on
$V^{\an}$ from $\ast$ to $\ast'$.
If
$\gamma_1 \in \pi_1^{\rm top}(V^{\an};\ast,\ast')$ and
$\gamma_2 \in \pi_1^{\rm top}(V^{\an};\ast',\ast'')$,
the composite
\begin{align}\label{pathcomp}
\gamma_1 \cdot \gamma_2:=\gamma_1 \gamma_2 \in \pi_1^{\rm top}(V^{\an};\ast,\ast'')
\end{align}
is defined 
so that paths are composed sequentially starting from left to right.
The composite of pro-$\ell$ \'etale paths is defined in the same way.
Moreover,
we write
\[
\pi_1^\ellet(V,\ast):=\pi_1^\ellet(V;\ast,\ast), \quad
\pi_1^{\rm top}(V^{\an},\ast):=\pi_1^{\rm top}(V^{\an};\ast,\ast)
\]
for the pro-$\ell$ fundamental group of $V$ and the topological fundamental group of $V^{\an}$,
respectively.
When two points $\ast,
\ast'$ on $V$ are regarded as points of $V^{\an}$ through the inclusion
$\overline{K} \hookrightarrow \bC$,
there is a canonical comparison map
\begin{align}\label{comp}
\pi_1^{\rm top}\left(V^{\an}; \ast, \ast' \right)
\to
\pi_1^\ellet(V;\ast,\ast').
\end{align}
The comparison map (\ref{comp}) induces an isomorphism between $\pi_1^\ellet(V,\ast)$ and the pro-$\ell$ completion of $\pi_1^{\rm top}(V^{\an},\ast)$,
and we identify 
these pro-$\ell$ groups
via this isomorphism.
\begin{figure}[hbtp]
\centering
\caption{Topological loops on ${\bP}^{1}(\bC) \backslash \{0,1,\infty\}$}
\label{fig:path2}
\begin{tikzpicture}[scale=1]


\draw (0.1,0) -- (0,0.1);
\draw (0.1,0) -- (0.2,0.1);

\draw (-3,0) -- (-3.1,0.1);
\draw (-3,0) -- (-2.9,0.1);

\draw (7.3,0) -- (7.4,-0.1);
\draw (7.3,0) -- (7.2,-0.1);



\draw (2.1,0) to [out=0,in=270] (2.8,1);
\draw (2.8,1) to [out=90,in=-10] (2,1.9);
\draw (2,1.9) to [out=170,in=90] (0.1,0);
\draw (0.1,0) to [out=270,in=190] (2,-1.9);
\draw (2,-1.9) to [out=10,in=270] (2.8,-0.7);
\draw (2.8,-0.7) to [out=90,in=360] (2.1,0);

\draw (2,0) to [out=0,in=210] (4,0.4);
\draw (4,0.4) to [out=30,in=180] (6,1.4);
\draw (6,1.4) to [out=0,in=90] (7.3,0);
\draw (7.3,0) to [out=270,in=0] (6,-1.4);
\draw (4,-0.4) to [out=330,in=180] (6,-1.4);
\draw (4,-0.4) to [out=150,in=0] (2,0);


\draw (2,0) to [out=0,in=270] (4,2);
\draw (4,2) to [out=90,in=0] (1.5,4);
\draw (1.5,4) to [out=180,in=90] (-3,0);
\draw (-3,0) to [out=270,in=180] (-2,-1.2);
\draw (-2,-1.2) to [out=0,in=-90] (-1,0);
\draw (-1,0) to [out=90,in=180] (1.8,3);
\draw (1.8,3) to [out=0,in=90] (3.5,1.5);
\draw (3.5,1.5) to [out=-90,in=0] (2,0);

\draw[dotted](-4.2,0)--(8.5,0);

\fill[white](2,0)circle(0.07);
\fill[white](6,0)circle(0.07);
\fill[white](-2,0)circle(0.07);

\node at (0,4.4) {~};
\node at (-0.2,0) {$l_0$};
\node at (1.8,-0.4) {$0$};
\node at (6.2,-0.4) {$1$};
\node at (7.7,0) {$l_1$};
\node at (-3.4,0) {$l_{\infty}$};
\node at (2,0) {${\circ}$};
\node at (6,0) {${\circ}$};
\node at (-2,0) {${\circ}$};
\node at (-2,-0.4) {$\infty$};
\end{tikzpicture}
\end{figure}

Recall the topological fundamental groups of 
${\bP}^{1}(\bC) \backslash \{0,1,\infty\}$ and 
$M_{0,5}^\an$.
First,
we write $l_0$, $l_1$, and $l_\infty$ for topological loops based at $\overrightarrow{01}$ on
${\bP}^{1}(\bC) \backslash \{0,1,\infty\}$, as in
FIGURE \ref{fig:path2}.
Then, $\pi_1^{\rm top}({\bP}^{1}(\bC) \backslash \{0,1,\infty\},\overrightarrow{01})$ coincides with the free group of rank $2$ with the generating system
$\vec{l}:=(l_0,
l_1)$:
\begin{align}\label{tripod}
\pi_1^{\rm top}({\bP}^{1}(\bC) \backslash \{0,1,\infty\},\overrightarrow{01})&=\left<l_0,
l_1,
l_{\infty} \mid l_0 \cdot l_1 \cdot l_{\infty}=1 \right>\\
&=\left<l_0,
l_1\right>. \notag
\end{align}
Next,
the topological fundamental group of $M_{0,5}^\an$ is the Teichm\"{u}ller modular group (mapping class group) of the Riemann sphere with five marked points (cf. \cite[\S 3.1]{N94}).
This group coincides with the quotient group of the pure braid group $P_4$ by its center $\left\langle \omega_4 \right\rangle$ and 
has the following description (cf. \cite{Lee10}):
\begin{align}
\pi_1^{\rm top}(M_{0,5}^\an,\vec{\tau})
&
=\left\langle
\left. 
\begin{array}{l}
A_{12},A_{13}, A_{14}, \\
A_{23}, A_{24} , A_{34} 
\end{array} 
\right|
\begin{array}{l}
(R1) \sim (R5)
\end{array}
\right\rangle 
\bigg(=P_4/\left\langle \omega_4 \right\rangle\bigg)
\\
&= \left\langle
\left. 
\begin{array}{l}
A_{12},A_{13}, A_{14}, A_{23}, \\
A_{24} , A_{34}, A_{25}, A_{35} 
\end{array} 
\right|
\begin{array}{l}
(R1) \sim (R6)
\end{array}
\right\rangle \notag
\end{align}
where
$\omega_4:=A_{12}A_{13}A_{14}A_{23}A_{24}A_{34}$ and the relations $(R1) \sim (R6)$ are as follows:
\begin{align}
(R1)~&A_{ij}=A_{ji},~A_{ii}=1~(1 \leq i,j \leq 4), \notag\\ \notag
(R2)~&A_{12}A_{34}=A_{34}A_{12},~~A_{14}A_{23}=A_{23}A_{14},\\ \notag
(R3)~&A_{12}A_{13}A_{23}=A_{23}A_{12}A_{13}=A_{13}A_{23}A_{12},\\ \notag
&A_{12}A_{14}A_{24}=A_{14}A_{24}A_{12}=A_{24}A_{12}A_{14},\\ 
&A_{23}A_{24}A_{34}=A_{24}A_{34}A_{23}=A_{34}A_{23}A_{24},\\ \notag
&A_{13}A_{14}A_{34}=A_{14}A_{34}A_{13}=A_{34}A_{13}A_{14},\\ \notag
(R4)~&A_{34}A_{24}A_{14}A_{13}=A_{13}A_{34}A_{24}A_{14},\\ \notag
(R5)~&\omega_4=1,\\ \notag
(R6)~&A_{12}A_{23}A_{24}A_{25}=1,\\ \notag
&A_{13}A_{23}A_{34}A_{35}=1.
\end{align}
In the present paper,
we define homotopy classes
\begin{align}
A_{ij}\left(=A_{ji}\right) \in \pi_1^{\rm top}(M_{0,5}^\an,\vec{\tau})~(1 \leq i,j \leq 5)
\end{align}
as shown below,
using the topological loops $l_{ij}~(i \in \{2,3\},~j \in \{1,2,3,4,5\},~i \neq j)$ in FIGURE \ref{fig:path4}.
\begin{figure}[hbtp]
\centering
\caption{Topological loops on ${\bP}^{1}(\bC) \backslash \{0,1,\infty\}$}
\label{fig:path4}
\begin{tikzpicture}[scale=0.7]
\draw (4,0.7) -- (4.1,0.6);
\draw (4,0.7) -- (4.1,0.8);

\draw (2,0.7) -- (2.1,0.6);
\draw (2,0.7) -- (2.1,0.8);

\draw (0,0.7) -- (0.1,0.6);
\draw (0,0.7) -- (0.1,0.8);

\draw (-2,0.7) -- (-1.9,0.6);
\draw (-2,0.7) -- (-1.9,0.8);

\draw (5.7,0) to [out=180,in=0] (4,0.7);
\draw (4,0.7) to [out=180,in=90] (3.3,0);
\draw (3.3,0) to [out=270,in=180] (4,-0.7);
\draw (4,-0.7) to [out=0,in=180] (5.7,0);
\draw (5.7,0) -- (6.0,0);

\draw (5.8,0) to [out=180,in=45] (4.5,-0.9);
\draw (4.5,-0.9) to [out=225,in=0] (3,-1.5);
\draw (3,-1.5) to [out=180,in=270] (1.3,0);
\draw (1.3,0) to [out=90,in=180] (2,0.7);
\draw (2,0.7) to [out=0,in=90] (2.7,0);
\draw (2.7,0) to [out=270,in=170] (3.5,-1);
\draw (3.5,-1) to [out=-10,in=210] (4.3,-0.9);
\draw (4.3,-0.9) to [out=30,in=180] (5.8,0);

\draw (5.9,0) to [out=180,in=40] (4,-2.5);
\draw (4,-2.5) to [out=225,in=0] (2.5,-3);
\draw (2.5,-3) to [out=180,in=270] (-0.7,0);
\draw (-0.7,0) to [out=90,in=180] (0,0.7);
\draw (0,0.7) to [out=0,in=90] (0.7,0);
\draw (0.7,0) to [out=-90,in=180] (3.1,-2);
\draw (3.1,-2) to [out=0,in=180] (5.9,0);

\draw (6,0) to [out=180,in=40] (4.3,-4);
\draw (4.3,-4) to [out=220,in=0] (2,-4.6);
\draw (2,-4.6) to [out=180,in=270] (-2.7,0);
\draw (-2.7,0) to [out=90,in=180] (-2,0.7);
\draw (-2,0.7) to [out=0,in=90] (-1.3,0);
\draw (-1.3,0) to [out=270,in=180] (2.5,-3.5);
\draw (2.5,-3.5) to [out=0,in=260] (4.7,-2);
\draw (4.7,-2) to [out=80,in=180] (6,0);



\node at (4,1.2) {$l_{23}$};
\node at (2,1.2) {$l_{24}$};
\node at (0,1.2) {$l_{25}$};
\node at (-2,1.2) {$l_{21}$};
\draw[dotted](-3.5,0)--(7.5,0);
\draw[dotted](8.5,0)--(19.5,0);

\fill[white](0,0)circle(0.07);
\fill[white](2,0)circle(0.07);
\fill[white](-2,0)circle(0.07);

\node at (2,-0.4) {$0$};
\node at (4,-0.4) {\small${\tau_2}$};
\node at (6,-0.4) {$\tau_1$};
\node at (0,-0.4) {$\infty$};
\node at (-2,-0.4) {$1$};
\node at (2,0) {${\circ}$};
\node at (6,0) {${\bullet}$};
\node at (4,0) {${\bullet}$};
\node at (0,0) {${\circ}$};
\node at (-2,0) {${\circ}$};

\draw (8,-5) -- (8,2);

\node at (0,2.2) {};

\draw (16,0.7) -- (16.1,0.6);
\draw (16,0.7) -- (16.1,0.8);

\draw (14,0.7) -- (14.1,0.6);
\draw (14,0.7) -- (14.1,0.8);

\draw (12,0.7) -- (12.1,0.6);
\draw (12,0.7) -- (12.1,0.8);

\draw (10,0.7) -- (10.1,0.6);
\draw (10,0.7) -- (10.1,0.8);

\draw (17.7,0) to [out=180,in=0] (16,0.7);
\draw (16,0.7) to [out=180,in=90] (15.3,0);
\draw (15.3,0) to [out=270,in=180] (16,-0.7);
\draw (16,-0.7) to [out=0,in=180] (17.7,0);
\draw (17.7,0) -- (18.0,0);

\draw (17.8,0) to [out=180,in=45] (16.5,-0.9);
\draw (16.5,-0.9) to [out=225,in=0] (15,-1.5);
\draw (15,-1.5) to [out=180,in=270] (13.3,0);
\draw (13.3,0) to [out=90,in=180] (14,0.7);
\draw (14,0.7) to [out=0,in=90] (14.7,0);
\draw (14.7,0) to [out=270,in=170] (15.5,-1);
\draw (15.5,-1) to [out=-10,in=210] (16.3,-0.9);
\draw (16.3,-0.9) to [out=30,in=180] (17.8,0);

\draw (17.9,0) to [out=180,in=40] (16,-2.5);
\draw (16,-2.5) to [out=225,in=0] (14.5,-3);
\draw (14.5,-3) to [out=180,in=270] (11.3,0);
\draw (11.3,0) to [out=90,in=180] (12,0.7);
\draw (12,0.7) to [out=0,in=90] (12.7,0);
\draw (12.7,0) to [out=-90,in=180] (15.1,-2);
\draw (15.1,-2) to [out=0,in=180] (17.9,0);

\draw (18,0) to [out=180,in=40] (16.3,-4);
\draw (16.3,-4) to [out=220,in=0] (14,-4.6);
\draw (14,-4.6) to [out=180,in=270] (9.3,0);
\draw (9.3,0) to [out=90,in=180] (10,0.7);
\draw (10,0.7) to [out=0,in=90] (10.7,0);
\draw (10.7,0) to [out=270,in=180] (14.5,-3.5);
\draw (14.5,-3.5) to [out=0,in=260] (16.7,-2);
\draw (16.7,-2) to [out=80,in=180] (18,0);



\node at (16,1.2) {$l_{34}$};
\node at (14,1.2) {$l_{35}$};
\node at (12,1.2) {$l_{31}$};
\node at (10,1.2) {$l_{32}$};

\fill[white](14,0)circle(0.07);
\fill[white](12,0)circle(0.07);
\fill[white](16,0)circle(0.07);

\node at (14,-0.4) {$\infty$};
\node at (16,-0.4) {\small$0$};
\node at (18,-0.4) {$\tau_2$};
\node at (12,-0.4) {$1$};
\node at (10,-0.4) {$\tau_1$};
\node at (14,0) {${\circ}$};
\node at (18,0) {${\bullet}$};
\node at (16,0) {${\circ}$};
\node at (12,0) {${\circ}$};
\node at (10,0) {${\bullet}$};
\end{tikzpicture}
\end{figure}
\begin{align*}
A_{12}&:=
\left\{
\begin{array}{ll}
t_1=l_{21}\\
t_2=\tau_2~(=const),
\end{array}
\right. \qquad
A_{13}:=
\left\{
\begin{array}{ll}
t_1=\tau_1~(=const) \\
t_2=l_{31},
\end{array}
\right. \\
A_{23}&:=\left\{
\begin{array}{ll}
t_1=l_{23}\\
t_2=\tau_2~(=const)
\end{array}
\right.=
\left\{
\begin{array}{ll}
t_1=\tau_1~(=const)\\
t_2=l_{32},
\end{array}
\right. \\
A_{24}&:=
\left\{
\begin{array}{ll}
t_1=l_{24}\\
t_2=\tau_2~(=const),
\end{array}
\right. \qquad
A_{34}:=\left\{
\begin{array}{ll}
t_1=\tau_1~(=const) \\
t_2=l_{34},
\end{array}
\right.\\
A_{25}&:=
\left\{
\begin{array}{ll}
t_1=l_{25}\\
t_2=\tau_2~(=const),
\end{array}
\right. \qquad
A_{35}:=
\left\{
\begin{array}{ll}
t_1=\tau_1~(=const) \\
t_2=l_{35},
\end{array}
\right.\\
A_{14}&:=A_{13}^{-1}A_{12}^{-1}A_{34}^{-1}A_{24}^{-1}A_{23}^{-1},\quad
A_{15}:=A_{14}^{-1}A_{13}^{-1}A_{12}^{-1},\quad
A_{45}:=A_{34}^{-1}A_{24}^{-1}A_{14}^{-1}.\\
A_{ii}&:=1~(1 \leq i \leq 5),\quad A_{ji}:=A_{ij}~(1 \leq i < j \leq 5).
\end{align*}
Then,
each $A_{ij} (i \neq j)$ is a meridian of a divisor on $M_{0,5}^\an$, as shown in TABLE \ref{table-M-meri}.

\renewcommand{\arraystretch}{1.8}
\tabcolsep = 0.3cm
\begin{table}[hbtp]
  \caption{Correspondence between each divisor on $M_{0,5}^\an$ and its meridian $A_{ij}$}
  \label{table-M-meri}
  \centering
  \begin{tabular}{|c||c|c|c|c|c|c|c|c|c|c|}
    \hline
    divisor & $t_1=0$ & $t_1=1$ & $t_1=\infty$ & $t_2=0$ & $t_2=1$ & $t_2=\infty$ & $t_1=t_2$   \\
    \hline 

    meridian  & $A_{24}$ & $A_{12}$  & $A_{25}$ & $A_{34}$ & $A_{13}$ & $A_{35}$ & $A_{23}$ \\ \hline
    
  \end{tabular}
\end{table}

\section{Fundamental groups of $V_{\rm B_3}^\an$ and $V_{\rm non\text{-}Fano}^\an$}
In this section,
we compute the topological fundamental groups of $V_{\rm B_3}^\an$ and $V_{\rm non\text{-}Fano}^\an$ by applying Galois theory of $M_{0,5}^\an$.
First,
by Galois theory for covering spaces,
the pointed finite Galois covering space
$
f_{\rm cov}^{\an}: \left({V}_{\rm B_3}^{\an},\vec{v}\right) \to \left(M_{0,5}^\an,\vec{\tau}\right)
$ in (\ref{fcov}) corresponds to
the normal subgroup of index $4$
\begin{align}
f_{\rm cov\ast}^{\an}\left( \pi_1^{\rm top}\left({V}_{\rm B_3}^{\an},\vec{v}\right) \right) \subset \pi_1^{\rm top}\left(M_{0,5}^\an,\vec{\tau}\right),
\end{align}
where
$f_{\rm cov\ast}^{\an}: \pi_1^{\rm top}\left({V}_{\rm B_3}^{\an},\vec{v}\right) \hookrightarrow \pi_1^{\rm top}\left(M_{0,5}^\an,\vec{\tau}\right)$ is the homomorphism induced by $f_{\rm cov}^{\an}$.
We set
\begin{align}
B_1,...,B_{10} \in \pi_1^{\rm top}\left(M_{0,5}^\an,\vec{\tau}\right)
\end{align}
as in TABLE \ref{table3}.
For each $i=1,...,10$,
we observe that the inverse image of $B_i$ under $f_{\rm cov}^{\an}$ is also a closed path.
Thus,
it holds that
$B_1,...,B_{10} \in f_{\rm cov\ast}^{\an}\left( \pi_1^{\rm top}\left({V}_{\rm B_3}^{\an},\vec{v}\right) \right)$.
Hereafter,
we identify the
closed paths of $\left(M_{0,5}^\an,\vec{\tau}\right)$ contained in $f_{\rm cov\ast}^{\an}\left( \pi_1^{\rm top}\left({V}_{\rm B_3}^{\an},\vec{v}\right) \right)$ with those of $\left({V}_{\rm B_3}^{\an},\vec{v}\right)$ by taking the inverse image under $f_{\rm cov}^{\an}$.
Then,
$
B_1,...,B_{10} \in \pi_1^{\rm top}\left({V}_{\rm B_3}^{\an},\vec{v}\right)
$
are meridians of divisors on ${V}_{\rm B_3}^{\an}$, as shown in TABLE \ref{table2}.

\renewcommand{\arraystretch}{1.8}
\tabcolsep = 0.2cm
\begin{table}[hbtp]
  \caption{$B_1, \ldots ,B_{10}$}
  \label{table3}
  \centering
  \begin{tabular}{|c||c|c|c|c|c|c|c|c|c|c|}
    \hline
    $i$ & $1$ & $2$ & $3$ & $4$ & $5$ & $6$ & $7$ & $8$ & $9$ & $10$   \\
    \hline 

    $B_i$  & $A_{12}$ & $A_{13}$  & $A_{23}$ & $A_{24}^2$ & $A_{34}^2$ & $A_{34}A_{23}A_{34}^{-1}$ & $A_{24}A_{12}A_{24}^{-1}$ & $A_{34}A_{13}A_{34}^{-1}$ & $A_{25}^2$ & $A_{35}^2$ \\ \hline 
\end{tabular}
\end{table}

\renewcommand{\arraystretch}{1.8}
\tabcolsep = 0.1cm
\begin{table}[hbtp]
  \caption{Correspondence between each divisor on ${V}_{\rm B_3}^{\an}$ and its meridian $B_{i}$}
  \label{table2}
  \centering
  \begin{tabular}{|c||c|c|c|c|c|c|c|c|c|c|c|c|}
    \hline
    divisor & $s_1=0$ & $s_2=0$ & $s_1=s_2$ & $s_1=1$ & $s_2=1$ & $s_1 s_2=1$ & $s_1=\infty$ & $s_2=\infty$ & $s_1=-1$ & $s_2=-1$   \\
    \hline 

    meridian & $B_1$ & $B_2$  & $B_3$ & $B_4$ & $B_5$ & $B_6$ & $B_7$ & $B_8$ & $B_{9}$ & $B_{10}$ \\ \hline
    
  \end{tabular}
\end{table}
Moreover,
we can verify that
the subgroup of $\pi_1^{\rm top}\left(M_{0,5}^\an,\vec{\tau} \right)$
generated by
$B_1,...,B_{10}$
is a normal subgroup of index $4$.
Therefore,
$\pi_1^{\rm top}\left({V}_{\rm B_3}^{\an},\vec{v}\right)$ is generated by meridians $B_1,...,B_{10}$.
Then
we obtain the following representation of $\pi_1^{\rm top}\left({V}_{\rm B_3}^{\an},\vec{v}\right)$ via the Reidemeister--Schreier rewriting process:
\begin{align}\label{fun_nf}
\pi_1^{\rm top}\left({V}_{\rm B_3}^{\an},\vec{v}\right)
&= \left\langle
\left. 
\begin{array}{l}
B_1, B_2, B_3, B_4, B_5, \\
B_6, B_7, B_8, B_9, B_{10}
\end{array} 
\right|
\begin{array}{l}
(R'1) \sim (R'12)
\end{array}
\right\rangle \\
& = \left\langle
\left. 
\begin{array}{l}
B_1, B_2, B_3, B_4, \\
B_5, B_6, B_7, B_8
\end{array} 
\right|
\begin{array}{l}
(R'1) \sim (R'10)
\end{array}
\right\rangle. \notag
\end{align}
The relations $(R'1) \sim (R'12)$ are described as follows:
\begin{align*}
(R'1)~&B_1 B_5 = B_5 B_1,~~B_4 B_8 = B_8 B_4,\\
(R'2)~&B_1 B_2 B_3 = B_2 B_3 B_1 = B_3 B_1 B_2,~~B_1 B_8 B_6 = B_8 B_6 B_1 = B_6 B_1 B_8,\\
(R'3)~& B_5 B_3 B_4 B_6 = B_6 B_5 B_3 B_4,~~B_3 B_4 B_1 B_2 = B_2 B_3 B_4 B_1,\\
(R'4)~&B_3 B_7 B_3^{-1} = B_5 B_3 B_7 B_3^{-1} B_5^{-1},\\
&B_6 B_5 B_6^{-1} = B_4^{-1} B_3^{-1} B_5 B_3 B_4,\\
&B_7 B_3 B_7^{-1} = B_3^{-1} B_8^{-1} B_3 B_8 B_3,\\
(R'5)~&B_4 B_6 B_5 B_2 B_3 = B_6 B_5 B_2 B_3 B_4,\\
(R'6)~&B_7 B_8 B_4 B_6 B_5 B_3 = B_8 B_3 B_7 B_4 B_6 B_5,~~B_6 B_5 B_2 B_3 B_7 B_4 = B_5 B_2 B_3 B_7 B_4 B_6,\\
(R'7)~&B_3 B_7 B_3^{-1} = B_6 B_5 B_2 B_3 B_7 B_3^{-1} B_2^{-1} B_5^{-1} B_6^{-1},\\
&B_6 B_2 B_6^{-1} = B_3 B_7^{-1} B_3^{-1} B_8^{-1} B_2 B_8 B_3 B_7 B_3^{-1},\\
&B_5 ^{-1} B_8 B_5 = B_7^{-1} B_3^{-1} B_2^{-1} B_5^{-1} B_8 B_5 B_2 B_3 B_7,\\
(R'8)~&B_5 B_8 B_5 B_2 B_3 B_7 B_4 B_6 B_1 = B_3 B_7 B_4 B_6 B_1 B_5 B_8 B_5 B_2,\\
(R'9)~&B_1^{-1} B_7 B_4 B_1 B_2^{-1} B_5^{-1} B_6^{-1} B_8^{-1} B_1^{-1} B_4^{-1} B_7^{-1} B_4 B_6 B_5 B_4^{-1} B_1 B_2 B_7 B_5^{-1} B_8 B_5 B_7^{-1} = 1,\\
&B_4 B_3^{-1} B_2^{-1} B_5^{-1} B_8^{-1} B_2 B_8 B_5 B_2 B_3 B_6^{-1} B_4^{-1} B_7^{-1} B_3^{-1} B_2^{-1} B_5^{-1} B_2^{-1} B_6 B_5 B_2 B_3 B_7 = 1,\\
(R'10)~&B_4 B_2^{-1} B_5^{-1} B_1^{-1} B_6^{-1} B_4^{-1} B_7^{-1} B_3^{-1} B_8 B_5 B_2 B_3 B_7 B_4 B_6 B_1 B_4^{-1} B_7^{-1} B_3^{-1} B_5^{-1} B_8^{-1} B_5 B_3 B_7 = 1,\\
(R'11)~&B_1 B_3 B_7 B_4 B_6 B_9 = 1,\\
(R'12)~&B_2 B_3 B_8 B_6 B_5 B_{10} = 1.
\end{align*}

The kernel of $\pi_1^{\rm top}\left({V}_{\rm B_3}^{\an},\vec{v}\right) \twoheadrightarrow \pi_1^{\rm top}\left(V_{\rm non\text{-}Fano}^{\an},\vec{v}\right)$ induced by
the inclusion ${V}_{\rm B_3}^{\an} \hookrightarrow V_{\rm non\text{-}Fano}^{\an}$
is the free group of rank $2$ generated by
$B_9, B_{10}$,
which are meridians of
$s_1=-1$,
$s_2=-1$,
respectively.
\begin{align}
1 \to \la B_9, B_{10} \ra \to \pi_1^{\rm top}\left({V}_{\rm B_3}^{\an},\vec{v}\right) \to \pi_1^{\rm top}\left(V_{\rm non\text{-}Fano}^{\an},\vec{v}\right) \to 1.
\end{align}
For $i=1,2,\cdots,10$, we set $(R''i):=(R'i)$.
The relations $(R''11),~(R''12)$ are described as follows:
\begin{align*}
&(R''11)~B_1 B_3 B_7 B_4 B_6 = 1,\\
&(R''12)~B_2 B_3 B_8 B_6 B_5 = 1.
\end{align*}
Hence,
$\pi_1^{\rm top}\left(V_{\rm non\text{-}Fano}^{\an},\vec{v}\right)$ is represented as follows:
\begin{align} \label{fun-nf}
\pi_1^{\rm top}\left(V_{\rm non\text{-}Fano}^{\an},\vec{v}\right)
&= \left\langle
\left. 
\begin{array}{l}
B_1, B_2, B_3, B_4, \\
B_5, B_6, B_7, B_8
\end{array} 
\right|
\begin{array}{l}
(R''1) \sim (R''12)
\end{array}
\right\rangle \\
&= \left\langle
\left. 
\begin{array}{l}
B_1, B_2, B_3, \\
B_4, B_5, B_6
\end{array} 
\right|
\begin{array}{l}
(R''1) \sim (R''10)
\end{array}
\right\rangle. \notag
\end{align}
To obtain a more detailed structure of $\pi_1^{\rm top}\left(V_{\rm non\text{-}Fano}^{\an},\vec{v}\right)$,
we consider the following diagram:
\begin{center}
\begin{tikzpicture}[auto]
\node (cic) at (7.5, -1) {$\circlearrowleft$};
\node (cic) at (7.5, 1) {$\circlearrowleft$};

\node (nf) at (5, 2) {$V_{\rm non\text{-}Fano}^{\an}$};
\node (P33) at (10, 2) {${\bP}^{1}(\bC) \backslash \{0,1,\infty\}$};

\node (B3) at (5, 0) {${V}_{\rm B_3}^{\an}$};
\node (P4) at (10, 0) {${\bP}^{1}(\bC) \backslash \{0,1,-1,\infty\}$};

\node  (Conf) at (5, -2) {$M_{0,5}^\an$};
\node  (P3) at (10, -2) {${\bP}^{1}(\bC) \backslash \{0,1,\infty\}$};

\node  (t2) at (12.5, 2) {$t$};
\node  (ni3) at (11.8, 2) {$\ni$};

\node  (t) at (12.5, 0) {$t$};
\node  (tt) at (12.5, -2) {$\left(\dfrac{1-t}{1+t}\right)^2.$};

\node  (ni1) at (12, 0) {$\ni$};
\node  (ni2) at (11.55, -2) {$\ni$};

\node (pr3) at (7.2, 2.3) {$f_8^{\an}\left(={\rm pr}_{1}\right)$};
\node (pr2) at (6.8, 0.3) {${\rm pr}_{1}$};
\node (pr22) at (7.2, -2.3) {${\rm pr}_1$};
\node (fcov) at (4.5, -1) {$f_{\rm cov}^\an$};


\draw[|->] (t) to node {} node[swap] {} (tt);
\draw[|->] (t) to node {} node[swap] {} (t2);

\draw[right hook->] (B3) to node {} node[swap] {} (nf);
\draw[right hook->] (P4) to node {} node[swap] {} (P33);
\draw[->>] (nf) to node {} node[swap] {} (P33);

\draw[->>] (B3) to node {} node[swap] {} (P4);
\draw[->>] (Conf) to node {} node[swap] {} (P3);

\draw[->>] (B3) to node {} node[swap] {} (Conf);
\draw[->>] (P4) to node {} node[swap] {} (P3);
\end{tikzpicture}
\end{center}
Note that ${\rm pr}_1$ is the projection to the first component.
The image of $\vec{v}$ under the projection ${\rm pr}_1$ (in the middle column) is Galois equivalent to $\overrightarrow{01}$.
Therefore, we obtain the following diagram by taking $\pi_1^{\rm top}$:
\begin{center}
\begin{tikzpicture}[auto]

\node (s) at (7.3, 1.3) {$s$};

\node (f4) at (1, 2) {$\left<B_2,B_3,B_5,B_6\right>$};
\node (nf) at (5, 2) {$\pi_1^{\rm top}\left(V_{\rm non\text{-}Fano}^{\an},\vec{v}\right)$};
\node (P33) at (10.2, 2) {$\pi_1^{\rm top}({\bP}^{1}(\bC) \backslash \{0,1,\infty\},\overrightarrow{01})$};

\node (f5) at (1, 0) {$\left<B_2,B_3,B_5,B_6,B_8\right>$};
\node (B3) at (5, 0) {$\pi_1^{\rm top}\left({V}_{\rm B_3}^{\an},\vec{v}\right)$};
\node (P4) at (10.2, 0) {$\pi_1^{\rm top}({\bP}^{1}(\bC) \backslash \{0,1,-1,\infty\},\overrightarrow{01})$};

\node  (f3) at (1, -2) {$\left<A_{13},A_{23},A_{34}\right>$};
\node  (Conf) at (5, -2) {$P_4/\left\langle \omega_4 \right\rangle$};
\node  (P3) at (10.2, -2) {$\pi_1^{\rm top}({\bP}^{1}(\bC) \backslash \{0,1,\infty\},\tau_1)$};

\node (pr222) at (7.3, 2.3) {$f_{8\ast}^{\an}$};
\node (pr2) at (6.9, 0.3) {${\rm pr}_{1\ast}$};
\node (pr22) at (6.9, -1.7) {${\rm pr}_{1\ast}$};
\node (fcov) at (4.5, -1) {$f_{\rm cov\ast}^\an$};

\node (1a) at (-2, 0) {$1$};
\node (1b) at (14, 0) {$1$};
\node (1c) at (-2, -2) {$1$};
\node (1d) at (14, -2) {$1$};
\node (1e) at (-2, 2) {$1$};
\node (1f) at (14, 2) {$1$};

\draw[->] (1a) to node {} node[swap] {} (f5);
\draw[->] (1c) to node {} node[swap] {} (f3);
\draw[->] (P4) to node {} node[swap] {} (1b);
\draw[->] (P3) to node {} node[swap] {} (1d);
\draw[->] (P33) to node {} node[swap] {} (1f);
\draw[->] (1e) to node {} node[swap] {} (f4);

\draw[right hook->] (f4) to node {} node[swap] {} (nf);
\draw[right hook->] (f5) to node {} node[swap] {} (B3);
\draw[right hook->] (f3) to node {} node[swap] {} (Conf);

\draw[->>] (P4) to node {} node[swap] {} (P33);
\draw[->>] (B3) to node {} node[swap] {} (nf);
\draw[->>] (f5) to node {} node[swap] {} (f4);

\draw[->>] (nf) to node {} node[swap] {} (P33);
\draw[->>] (B3) to node {} node[swap] {} (P4);
\draw[->>] (Conf) to node {} node[swap] {} (P3);

\draw[right hook->] (f5) to node {} node[swap] {} (f3);
\draw[right hook->] (B3) to node {} node[swap] {} (Conf);
\draw[right hook->] (P4) to node {} node[swap] {} (P3);

\draw[->] (8,1.8) to [out=210,in=-30] (6.5,1.8);
\end{tikzpicture}
\end{center}
We take a group-theoretic section of $f_{8\ast}$
\begin{align}
s: \pi_1^{\rm top}({\bP}^{1}(\bC) \backslash \{0,1,\infty\},\overrightarrow{01}) \to \pi_1^{\rm top}\left(V_{\rm non\text{-}Fano}^{\an},\vec{v}\right)
\end{align}
defined by $l_0 \mapsto B_1$ and $l_1 \mapsto B_4$.
Then, we have 
\begin{align}\label{semiprod} 
\pi_1^{\rm top}\left(V_{\rm non\text{-}Fano}^{\an},\vec{v}\right) &= \left<B_1,B_4\right> \ltimes \left<B_2,B_3,B_5,B_6\right>\\
& = F_2 \ltimes F_4. \notag
\end{align}
One can see that
the conjugation action $F_2^{} \to \Aut\left(F_4^{\rm ab}\right)$ is trivial by using $(R''1) \sim (R''12)$.
Hence, we obtain
\begin{align}\label{prodfun} 
\pi_1^{\rm top}\left(V_{\rm non\text{-}Fano}^{\an},\vec{v}\right)^{\rm ab} &= \left<\bar{B}_1,\bar{B}_4\right> \times \left<\bar{B}_2,\bar{B}_3,\bar{B}_5,\bar{B}_6\right>\\
& = F_2^{\ab} \times F_4^{\ab}, \notag
\end{align}
where 
$\bar{g}$ denotes the image of an element $g \in \pi$ in the abelianization $\pi^{\ab}$ for a group $\pi$.

\begin{rem}
In \cite{Su01},
the topological fundamental groups of $V_{\rm non\text{-}Fano}^{\an}$ and ${V}_{\rm B_3}^{\an}$ are computed by the braid monodromy of the arrangement.
Instead,
in this paper,
we compute them by applying Galois theory of $M_{0,5}$.
The form (\ref{fun-nf}) of the topological fundamental group obtained in this paper has the advantage that it is easier to compute the image of $B_j$ under $f_{i\ast}^{\rm an}$~(see TABLE \ref{table-main}).
\end{rem}

\section{Review of Polylogarithms}
In this section,
we recall the definition and some properties of complex/$\ell$-adic polylogarithms and their Lie versions.

\subsection{Complex polylogarithms}\label{comp-poly}

In this subsection,
we recall the basic properties of complex polylogarithms \cite{NW12}, \cite{F04}, \cite{F14}, \cite{D90}.
The complex logarithm $\log(z;\gamma)$ depending on $\gamma \in \pi_1^{\rm top}\left(\mathbb{P}^1(\mathbb{C}) \backslash \{0,1,\infty\}; \overrightarrow{01}, z\right)$ is defined as follows:
\begin{align}\label{log}
\log(z;\gamma):=\int_{\delta_{\overrightarrow{10}}^{-1} \cdot \gamma}\frac{dt}{t},
\end{align}
where $\delta_{\overrightarrow{10}} \in \pi_1^{\rm top}\left(\mathbb{P}^1(\mathbb{C}) \backslash \{0,1,\infty\}; \overrightarrow{01}, \overrightarrow{10}\right)$ is the straight path along the unit interval $(0,1) \subset \mathbb{P}^1(\mathbb{R}) \backslash \{0,1,\infty\}$ as shown in FIGURE~\ref{fig:path}.
For each $\gamma \in \pi_1^{\rm top}\left(\mathbb{P}^1(\mathbb{C}) \backslash \{0,1,\infty\}; \overrightarrow{01}, z\right)$,
we set 
\begin{align} \label{gamma'}
\gamma':=\delta_{\overrightarrow{10}} \cdot \phi_{\overrightarrow{10}}(\gamma) \in \pi_1^{\rm top}\left(\mathbb{P}^1(\mathbb{C}) \backslash \{0,1,\infty\}; \overrightarrow{01}, 1-z\right),
\end{align}
where $\phi_{\overrightarrow{10}} \in \Aut\left( \mathbb{P}^1(\mathbb{C}) \backslash \{0,1,\infty\} \right)$ is defined by $\phi_{\overrightarrow{10}}(t)=1-t$.
For $k \in \mathbb{N}$,
we define the complex polylogarithm $Li_{{k}}(z;\gamma)$ associated with $\gamma
\in \pi_1^{\rm top}\left(\mathbb{P}^1(\mathbb{C}) \backslash \{0,1,\infty\}; \overrightarrow{01}, z\right)$ as the iterated integral
\begin{align} \label{cpoly1}
& {Li}_{{k}}(z;\gamma):=
\int_{\gamma} \frac{1}{t}Li_{k-1}(t;\gamma_t)dt \quad (k \geq 2),
 \\ \label{cpoly2}
& Li_1(z;\gamma):=-\log(1-z;\gamma')=\int_{\gamma}\frac{dt}{1-t}.
\end{align}
Therefore, the complex polylogarithm and logarithm may be regarded as maps
\[
{Li}_{{k}}(z),~\log(z): \pi_1^{\rm top}\left(\mathbb{P}^1(\mathbb{C}) \backslash \{0,1,\infty\}; \overrightarrow{01}, z\right) \to \bC,
\]
sending $\gamma \mapsto {Li}_{{k}}(z;\gamma),~\log(z;\gamma)$, respectively, where $z$ is a $\bC$-rational (possibly tangential) base point of $\mathbb{P}^1 \backslash \{0,1,\infty\}$.
Note that $\log(z)$ factors through $\pi_1^{\rm top}\left(\mathbb{P}^1(\mathbb{C}) \backslash \{0,\infty\}; \overrightarrow{01}, z\right)$.
In particular,
we define 
the zeta value by
\begin{align} \label{zeta}
\zeta({k}):={Li}_{{k}}\left(\overrightarrow{10};\delta_{\overrightarrow{10}}\right) \in \mathbb{R}
\quad (k \geq 2).
\end{align}

The complex polylogarithm ${Li}_k(z)$ admits a generating series closely related to the KZ equation, as described below.
Let
\begin{align}\label{Cvariable}
X:=\left(\dfrac{dz}{z}\right)^{\ast},~
Y:=\left(\dfrac{dz}{z-1}\right)^{\ast}~
\in \Omega^1_{\rm log}\left( \mathbb{P}^1(\mathbb{C}) \backslash \{0,1,\infty\} \right)^{\ast}
\end{align}
be the duals of the canonical differential forms on $\mathbb{P}^1(\mathbb{C}) \backslash \{0,1,\infty\}$.
Here,
\[
\Omega^1_{\rm log}\left( \mathbb{P}^1(\mathbb{C}) \backslash \{0,1,\infty\} \right)\]
is the space of meromorphic 1-forms on $\mathbb{P}^1(\mathbb{C})$ with logarithmic singularities along $\{0,1,\infty\}$.
The natural isomorphism $\Omega^1_{\rm log}\left( \mathbb{P}^1(\mathbb{C}) \backslash \{0,1,\infty\} \right)^{\ast} \simeq \pi_1^{\rm top}\left(\mathbb{P}^1(\mathbb{C}) \backslash \{0,1,\infty\}, \overrightarrow{01}\right)^{\rm ab} \otimes \mathbb{C}$
gives the identifications
$
X=\dfrac{\bar{l}_0}{2\pi \sqrt{-1}}$,
$Y=\dfrac{\bar{l}_1}{2\pi \sqrt{-1}}$,
and
let
$$Z:=-Y-X\left(=\dfrac{\bar{l}_\infty}{2\pi \sqrt{-1}}\right)$$
where $l_0$, $l_1$ and $l_\infty$ are as described in (\ref{tripod}).
Given a path $\gamma \in \pi_1^{\rm top}\left(\mathbb{P}^1(\mathbb{C}) \backslash \{0,1,\infty\}; \overrightarrow{01}, z\right)$ and the $1$-form
\[
\omega:=\frac{dz}{z}X+\frac{dz}{z-1}Y \in \Omega^1_{\rm log}\left( \mathbb{P}^1(\mathbb{C}) \backslash \{0,1,\infty\} \right) \otimes \mathbb{C} \langle \langle X, Y \rangle \rangle, 
\]
we can form a formal power series
\begin{align}\label{Lambda}
\Lambda_{\overrightarrow{01}}\left(z; \gamma \right):=1+\sum_{i=1}^{\infty} \int_{\gamma} {\underbrace{\omega \ldots \omega}_{i~\text{times}}} \in \bC \langle \langle X,
Y \rangle \rangle^{\times}
\end{align}
where  $\mathbb{C} \langle \langle X, Y \rangle \rangle$ is the noncommutative formal power series ring in two variables over $\mathbb{C}$.
It is easily verified that
the expansion of (\ref{Lambda}) has the form
\begin{align}\label{G0}
\Lambda_{\overrightarrow{01}}\left(z; \gamma \right)=&1+\sum_{i=1}^{\infty}\frac{\log^i(z;\gamma)}{i!}X^i+\sum_{i=1}^{\infty}\frac{\log^i(1-z;\gamma')}{i!}Y^i\\
&-\sum_{i=1}^{\infty} Li_{i+1}(z;\gamma) YX^{i}+\cdots. \notag
\end{align}
We note the relationship between $\Lambda_{\overrightarrow{01}}\left(z; \gamma \right)$ and the fundamental solution of the KZ equation (\ref{KZeq}).
The formal KZ equation on $\mathbb{P}^1(\mathbb{C}) \backslash \{0,1,\infty\}$ is the differential equation
\begin{align}\label{KZeq}
\dfrac{d}{d z}G(z)=\left( \dfrac{X}{z}+\dfrac{Y}{z-1} \right)G(z)
\end{align}
where
$G(z)$ is an analytic function (i.e., each of its coefficients is analytic) with values in $\mathbb{C} \langle \langle X, Y \rangle \rangle$.
Let $G_{\overrightarrow{01}}\left(z; \gamma \right)$ be the fundamental solution of (\ref{KZeq})
characterized by the asymptotic behavior
\[
G_{\overrightarrow{01}}\left(z; \gamma \right) \approx \sum_{i=0}^{\infty}\frac{\log^{i}(z;\gamma)}{i!}X^i
\]
as $z \to \overrightarrow{01}$ along $\gamma$.
For a word~$w=w_1 \cdots w_n~(w_1,\cdots,w_n \in \{X,Y\})$,
we define $\overline{w}:=w_n \cdots w_1$.
For $\Lambda=\sum_{w} \Coeff_{w}(\Lambda) \cdot w  \in \mathbb{C} \langle \langle X,Y \rangle \rangle$,
we define the reversal of $\Lambda$ by $\overline{\Lambda}:=\sum_{w} \Coeff_{w}(\Lambda) \cdot \overline{w}$.
Then,
the following equation holds:
\begin{align}\label{G0'}
G_{\overrightarrow{01}}\left(z; \gamma \right)=\overline{\Lambda_{\overrightarrow{01}}\left(z; \gamma \right)},
\end{align}
that is,
\begin{align}
Li_{k}(z;\gamma)=-\Coeff_{X^{k-1}Y}\left( G_{\overrightarrow{01}}\left(z; \gamma \right) \right).
\end{align}

Moreover,
the Lie version of the complex polylogarithm is defined as follows.
We denote by
${\rm Lie}_{\bC}\langle \langle X,
Y \rangle \rangle$
the complete free Lie algebra consisting of Lie-like elements in $\bC \langle \langle X,
Y \rangle \rangle$.
Since $\Lambda_{\overrightarrow{01}}\left(z; \gamma \right) \in \bC \langle \langle X,
Y \rangle \rangle$ is group-like,
we can take its inverse and 
obtain a Lie formal power series
\[
{\bf log}\left(\Lambda_{\overrightarrow{01}}\left(z;\gamma \right)^{-1}\right) \in {\rm Lie}_{\bC}\langle \langle X,
Y \rangle \rangle
\]
where
\[
{\bf log}: \Lambda^{-1} \mapsto \sum_{n=1}^{\infty}\frac{(-1)^{n-1}\left(\Lambda^{-1}-1\right)^n}{n}.
\]
We write 
\[
\varphi_{k}: {\rm Lie}_{\bC}\langle \langle X,
Y \rangle \rangle \to \mathbb{C}
\]
for the $\mathbb{C}$-linear form that 
selects 
the coefficient of $e_k$ with respect to the Hall basis
\[
e_1:=Y, \quad
e_m:=\left[X, e_{m-1}\right]={\rm ad}(X)^{m-1}(Y).
\]
We define
\begin{align} \label{clidef}
&{\rm li}_{k}(z;\gamma):=\frac{1}{(2 \pi \sqrt{-1})^k} \cdot \varphi_{k} \left({\bf log}\left(\Lambda_{\overrightarrow{01}}\left(z; \gamma \right)^{-1}\right)\right) \quad (k \geq 1),\\
&\mathrm{li}_0\left(z;\gamma\right):=-\frac{1}{2 \pi \sqrt{-1}} \log\left(z;\gamma\right), \notag
\end{align}
which is called the Lie version of the complex polylogarithm.
Then, Nakamura and Wojtkowiak \cite[Proposition 5.2]{NW12} established the following formula:
\begin{align} \label{c-explicit}
{\rm li}_{n}(z;\gamma)=\dfrac{(-1)^{n+1}}{(2 \pi \sqrt{-1})^n}\sum_{k=0}^{n-1}\dfrac{B_k}{k!}\log^{k}(z;\gamma){Li}_{n-k}(z;\gamma),
\end{align}
where
$\{B_n\}_{n}$ is the sequence of Bernoulli numbers defined by
$\sum_{n=0}^{\infty}\frac{B_n}{n!}T^n=\frac{T}{e^T-1}$.
Note that $B_1=-\frac{1}{2}$.

\subsection{$\ell$-adic Galois polylogarithms}\label{l-poly}

In this subsection,
we recall the basic properties of $\ell$-adic Galois polylogarithms \cite{NW12}, \cite{NW20a}, \cite{NW99}.

For an algebraic variety $V$ over ${\overline{K}}$ with its $K$-rational (possibly tangential base) points
$\ast$ and
$\ast'$,
the absolute Galois group $G_K=\Gal(\overline{K}/K)$ acts on the pro-$\ell$ set $\pi_1^\ellet(V;\ast,\ast')$ (cf. \cite[2.8]{N99}, \cite[(1.1)]{NW99}).
For $\sigma \in G_K$ and $p \in \pi_1^{\rm top}(V^{\an}; \ast, \ast')$,
regarding $p$ as $p \in \pi_1^\ellet(V;\ast,\ast')$ via the comparison map (\ref{comp}),
we define a pro-$\ell$ \'etale loop
\begin{align} \label{f}
{\mathfrak f}^{\ast',p}_{\ast,\sigma}:=
p \cdot \sigma(p)^{-1} \in \pi_1^\ellet(V,\ast).
\end{align}
If $p=\delta_{\overrightarrow{10}}$, 
then this is called the $\ell$-adic Ihara associator in \cite[Definition 2.32]{F07}.

In the following,
we mainly consider the case in which $V=\mathbb{P}^1_{\overline{K}} \backslash \{0,1,\infty\}$ and $\ast=\overrightarrow{01}$.
In this case,
the pro-$\ell$ fundamental group $\pi_1^\ellet\left(\mathbb{P}^1_{\overline{K}} \backslash \{0,1,\infty\},\overrightarrow{01}\right)$
is the free pro-$\ell$ group of rank 2 with a system of topological generators $\vec{l}:=(l_0, l_1)$ as described in (\ref{tripod}).
Let $z$ be a $K$-rational (possibly tangential) base point on $\mathbb{P}^1_{\overline{K}} \backslash \{0,1,\infty\}$.
In contrast to the complex case (\ref{Cvariable}),
we set
\begin{align}\label{lG-variable}
X:={\bf log}(l_0),~Y:={\bf log}(l_1),~Z:={\bf log}(l_\infty)
\end{align}
in the complete group ring of $\pi_1^\ellet\left(\mathbb{P}^1_{\overline{K}} \backslash \{0,1,\infty\},\overrightarrow{01}\right)$ over $\mathbb{Q}_{\ell}$.
For $\gamma \in \pi_1^{\rm top}\left(\mathbb{P}^1(\mathbb{C}) \backslash \{0,1,\infty\}; \overrightarrow{01}, z\right)$ and $\sigma \in G_K$,
we obtain a formal power series
\begin{align} \label{f2}
{\mathfrak f}^{z,\gamma}_{\overrightarrow{01},\sigma} \in \mathbb{Q}_{\ell} \langle \langle X,
Y \rangle \rangle^{\times}
\end{align}
by taking the image of ${\mathfrak f}^{z,\gamma}_{\overrightarrow{01},\sigma} \in \pi_1^\ellet\left(\mathbb{P}^1_{\overline{K}} \backslash \{0,1,\infty\},\overrightarrow{01}\right)$ under the multiplicative $\ell$-adic Magnus embedding into the multiplicative group of the noncommutative formal power series ring
\begin{align}\label{ladicmagnus}
\pi_1^\ellet\left(\mathbb{P}^1_{\overline{K}} \backslash \{0,1,\infty\},\overrightarrow{01}\right) \hookrightarrow \mathbb{Q}_{\ell} \langle \langle X,
Y \rangle \rangle^\times
\end{align}
defined by
$l_0 \mapsto {\bf exp}(X):=\sum_{n=0}^{\infty}\frac{1}{n!}{X}^n$ and
$l_1 \mapsto {\bf exp}(Y)$.
The power series ${\mathfrak f}^{z,\gamma}_{\overrightarrow{01},\sigma}$ is an $\ell$-adic Galois analogue of the KZ fundamental solution 
$G_{\overrightarrow{01}}\left(z; \gamma \right)$ in (\ref{G0'}).
For $k \in \mathbb{N}$,
we define the $\ell$-adic Galois polylogarithm and the $\ell$-adic Galois zeta value by
\begin{align} \label{lpoly1}
 {Li}^\ell_{{k}}(z;\gamma,\sigma)
&:=-\Coeff_{X^{k-1}Y}\left( {\mathfrak f}^{z,\gamma}_{\overrightarrow{01},\sigma} \right), \\
\label{lpoly2}
{\boldsymbol \zeta}_{{k}}^\ell(\sigma)&:={Li}^\ell_{{k}}\left(\overrightarrow{10};\delta_{\overrightarrow{10}},\sigma\right).
\end{align}
Note that ${\boldsymbol \zeta}_{{k}}^\ell(\sigma)$ is called the $\ell$-adic Soul\'e element in \cite[Definition 2.32]{F07}
and is described by the Soul\'e character (cf. \cite[Examples 2.33]{F07}).

Therefore, our $\ell$-adic Galois polylogarithm may be regarded as the map
\[
Li_k^\ell(z): \pi_1^{\rm top}\left(\mathbb{P}^1(\mathbb{C}) \backslash \{0,1,\infty\}; \overrightarrow{01}, z\right) \times G_K \to \bQ_\ell,
\]
which sends $(\gamma,\sigma) \mapsto Li_k^\ell(z;\gamma,\sigma)$, where $z$ is a $K$-rational (possibly tangential) base point of $\mathbb{P}^1 \backslash \{0,1,\infty\}$.

Here, we recall the character version of the 
$\ell$-adic Galois polylogarithm
which is closely related to the Soul\'e character (cf. \cite[REMARK 2]{NW99}).
For each $n \in \mathbb{N}$,
we denote by
$\zeta_n:={\rm exp}(\frac{2 \pi \sqrt{-1}}{n}) \in \overline{K}$
a primitive $n$-th root of unity
and choose 
a system of
$n$-th roots
$z^{1/n}$
($n \in \mathbb{N}$) determined
by $\gamma \in \pi_1^{\rm top}\left(\mathbb{P}^1(\mathbb{C}) \backslash \{0,1,\infty\}; \overrightarrow{01}, z\right)$~(cf. the paragraph above DEFINITION 2 in \cite{NW99} p. 286).
For each $k \in \mathbb{Z}_{\geq 1}$,
the $\ell$-adic Galois polylogarithmic character (or the generalized $\ell$-adic Soul\'e character) associated with $\gamma \in \pi_1^{\rm top}\left(\mathbb{P}^1(\mathbb{C}) \backslash \{0,1,\infty\}; \overrightarrow{01}, z\right)$
\begin{align}\label{gsoule}
\tilde{\chi}_{k}
^{z,\gamma}: G_K \to {\mathbb Z}_{\ell}
\end{align}
is defined by the following Kummer property:
\[\zeta_{\ell^n}^{\tilde{\chi}_{k}
^{z,\gamma}(\sigma)~{\rm mod}~\ell^n}=
\sigma \left(\prod_{i=0}^{\ell^n -1}
(1-\zeta_{\ell^n}^{\chi(\sigma)
^{-1}i}z^{1/\ell^n})
^{\frac{i^{k-1}}{\ell^n}}\right) 
\bigg/ \prod_{i=0}^{\ell^n -1}
(1-\zeta_{\ell^n}
^{i+\rho_{z,\gamma}(\sigma)}z^{1/\ell^n})
^{\frac{i^{k-1}}{\ell^n}}\]
where
$\chi: G_K \to {\mathbb Z}_{\ell}^\times$ is the $\ell$-adic cyclotomic character
and 
\begin{align}\label{kummer1}
\rho_{z,\gamma}: G_K \to {\mathbb Z}_{\ell}
\end{align}
is the Kummer 1-cocycle
defined by
$
\sigma(z^{1/{\ell^n}})=z^{1/{\ell^n}} \cdot \zeta_{\ell^n}^{\rho_{z,\gamma}(\sigma)~{\rm mod}~\ell^n}.
$
Then,
the $\ell$-adic Magnus expansion of ${\mathfrak f}^{z,\gamma}_{\overrightarrow{01},\sigma} \in \mathbb{Q}_{\ell} \langle \langle X,
Y \rangle \rangle^{\times}$ has the form
\begin{align}\label{expli}
{\mathfrak f}^{z,\gamma}_{\overrightarrow{01},\sigma}
=1
&+\sum_{i=1}^{\infty}\dfrac{(-\rho_{z,\gamma}(\sigma))^i}{i!}X^{i}
+\sum_{i=1}^{\infty}\dfrac{(-\rho_{1-z,\gamma'}(\sigma))^i}{i!}Y^{i}
-\sum_{i=1}^\infty \frac{{\tilde{\chi}_{i+1}}
^{z,\gamma}(\sigma)}{i!} Y{X}^i\\
&-\sum_{i=1}^{\infty}{Li}^{\ell}_{i+1}(z;\gamma,\sigma) \cdot X^{i}Y+ \cdots, \notag
\end{align}
where $\gamma' \in \pi_1^{\rm top}\left(\mathbb{P}^1(\mathbb{C}) \backslash \{0,1,\infty\}; \overrightarrow{01}, 1-z\right)$ is as described in (\ref{gamma'}).

Moreover,
the Lie version of the $\ell$-adic Galois polylogarithm is defined as follows.
We denote by
${\rm Lie}_{\bQ_{\ell}}\langle \langle X,
Y \rangle \rangle$
the complete free Lie algebra consisting of Lie-like elements in $\Q_{\ell} \langle \langle X,
Y \rangle \rangle$.
Since ${\mathfrak f}^{z,\gamma}_{\overrightarrow{01},\sigma} \in \bQ_{\ell} \langle \langle X,
Y \rangle \rangle^{\times}$ is group-like,
we can take its inverse and 
obtain a Lie formal power series
\[
{\bf log}\left(\left({\mathfrak f}^{z,\gamma}_{\overrightarrow{01},\sigma}\right)^{-1}\right) \in {\rm Lie}_{\bQ_{\ell}}\langle \langle X,
Y \rangle \rangle.
\]
We write 
\[
\varphi_{k,\vec{l}}: {\rm Lie}_{\bQ_{\ell}}\langle \langle X,
Y \rangle \rangle \to \mathbb{Q}_{\ell}
\]
for the $\Q_{\ell}$-linear form that selects the coefficient of $e_k$ with respect to the Hall basis
\[
e_1:=Y, \quad
e_m:=\left[X,e_{m-1}\right]={\rm ad}(X)^{m-1}(Y)
\]
of ${\rm Lie}_{\bQ_{\ell}}\langle \langle X,
Y \rangle \rangle$.
We define
\begin{align} \label{llidef}
&{\ell i}_{k}(z;\gamma,\sigma):=\varphi_{k,\vec{l}}\left({\bf log}\left(\left({\mathfrak f}^{z,\gamma}_{\overrightarrow{01},\sigma}\right)^{-1}\right)\right) \quad (k \geq 1),\\
&{\ell i}_{0}(z;\gamma,\sigma):=\rho_{z,\gamma}(\sigma), \notag
\end{align}
which is called the Lie version of the $\ell$-adic Galois polylogarithm.
By an argument analogous to that of \cite[Proposition 5.2]{NW12},
we obtain the following conversion formula:
\begin{align}\label{explicit2}
{\ell i}_{n}(z;\gamma,\sigma)=\sum_{k=0}^{n-1}\dfrac{B_k}{k!}\left(-\rho_{z,\gamma}(\sigma)\right)^{k}{Li}^\ell_{{n-k}}(z;\gamma,\sigma).
\end{align}
Moreover,
the following explicit formula holds, as given in \cite{NS25}:
\begin{align}\label{explicit3}
{Li}^\ell_{n}(z;\gamma,\sigma)=(-1)^{n-1} \sum_{k=0}^{n-1} \frac{\rho_{z,\gamma}(\sigma)^k}{k!}
\frac{\tilde{\chi}_{n-k}^{z,\gamma}(\sigma)}{(n-1-k)!}.
\end{align}

The $\ell$-adic Galois polylogarithm is analogous to the complex polylogarithm, as in TABLE \ref{table_analog}.

\begin{rem}\label{rem:history-and-perspectives}
The $\ell$-adic Galois polylogarithm was originally introduced by Wojtkowiak \cite{W0}, and its fundamental properties were extensively studied in a series of papers \cite{W1}-\cite{W5}.
In \cite{NW99}, an explicit formula was established to express the $\ell$-adic Galois polylogarithms in terms of the generalized $\ell$-adic Soul\'e characters.
Subsequently, various functional equations for the $\ell$-adic Galois polylogarithms were derived in \cite{NW12}, \cite{NW20a} and \cite{NW20b}.
More recently, functional equations for $\ell$-adic Galois multiple polylogarithms, which serve as higher-depth generalizations of $\ell$-adic Galois polylogarithms, have been investigated by \cite{NS25}, \cite{Sh24} and \cite{Sh26}.
In \cite{Sa21}, Sakugawa demonstrated that the generalized $\ell$-adic Soul\'e characters coincide with the $\ell$-adic \'etale realization of the generalized Beilinson elements in K-groups.
Furthermore, a systematic study of various realizations of multiple polylogarithms, including the $\ell$-adic \'etale realization, was developed by Furusho \cite{F04} from the perspective of a Tannakian formalism.
Connections between the $\ell$-adic Galois polylogarithm and Coleman's $p$-adic polylogarithm (the $p$-adic crystalline counterpart) were studied in \cite{NSW17a} and \cite{NSW17b}, while the relationship between the $\ell$-adic Galois polylogarithm and the finite polylogarithm (in the sense of Kaneko--Zagier) was investigated in \cite{SS17}.
\end{rem}

\vspace*{\stretch{1}}
\renewcommand{\arraystretch}{2.9}
\tabcolsep = 0.3cm
\begin{table}[htb]\label{table_analog}
\centering
  \caption{Analogy between complex polylogarithms and $\ell$-adic Galois polylogarithms}
\label{table_analog}
  \begin{tabular}{|c||c|}  \hline
    $\ell$-adic Galois side & complex side  \\ \hline \hline

$z$ : $K$-rational base point on $\mathbb{P}^1 \backslash \{0,1,\infty\}$ & $z$ : $\mathbb{C}$-rational base point on $\mathbb{P}^1 \backslash \{0,1,\infty\}$ \\ \hline

$X={\bf log}(l_0),~Y={\bf log}(l_1)$ & $X=\left(\dfrac{dz}{z}\right)^{\ast},~Y=\left(\dfrac{dz}{z-1}\right)^{\ast}$ \\ 


\hline


$Z={\bf log}({\bf exp}(-Y){\bf exp}(-X))$ 
& $Z=-Y-X$ \\ 

\hline

   The action of $G_K$ on $\pi_1^\ellet\left(\mathbb{P}^1_{\overline{K}} \backslash \{0,1,\infty\}\right)$   & The formal KZ equation on $\mathbb{P}^1(\mathbb{C}) \backslash \{0,1,\infty\}$ \\ \hline 

    ${\mathfrak f}^{z,\gamma}_{\overrightarrow{01},\sigma} \in \mathbb{Q}_{\ell} \langle \langle X,
Y \rangle \rangle^{\times},$ & $G_{\overrightarrow{01}}\left(z; \gamma \right)=\overline{\Lambda_{\overrightarrow{01}}\left(z; \gamma \right)} \in \mathbb{C} \langle \langle X,Y \rangle \rangle^{\times},$ \\ 

$(\gamma,\sigma) \in \pi_1^{\rm top}\left(\mathbb{P}^1(\mathbb{C}) \backslash \{0,1,\infty\}; \overrightarrow{01}, z\right) \times G_K$ & $\gamma \in \pi_1^{\rm top}\left(\mathbb{P}^1(\mathbb{C}) \backslash \{0,1,\infty\}; \overrightarrow{01}, z\right)$ \\ \hline

  ${Li}^{\ell}_{k}(z;\gamma,\sigma) \in  \mathbb{Q}_{\ell}$ & ${Li}_{k}(z;\gamma) \in  \mathbb{C}$ \\ \hline 

   ${\boldsymbol \zeta}_{{k}}^{\ell}(\sigma) \in \mathbb{Q}_{\ell}$ & $\zeta({k}) \in \mathbb{R}$ \\ \hline 

${Li}^{\ell}_{1}(z;\gamma,\sigma)=\rho_{1-z,\gamma'}(\sigma) \in \bZ_{\ell}$ & $Li_{1}(z;\gamma)=-\log(1-z;\gamma') \in \bC$ \\ \hline 

$\tilde{\chi}_{k}
^{z,\gamma}(\sigma) \in \bZ_{\ell}$ & $-(k-1)! \cdot \Coeff_{X^{k-1}Y}\left( \Lambda_{\overrightarrow{01}}\left(z; \gamma \right) \right) \in \bC$ \\ \hline 

${\ell i}_{k}(z;\gamma,\sigma) \in \bQ_{\ell}$ & $\varphi_{k} \left({\bf log}\left(G_{\overrightarrow{01}}\left(z; \gamma \right)^{-1}\right)\right) \in \bC$ \\ \hline 

$\varphi_{k,\vec{l}}\left({\bf log}\left(\left(\overline{{\mathfrak f}^{z,\gamma}_{\overrightarrow{01},\sigma}}\right)^{-1}\right)\right) \in \bQ_{\ell}$ & ${\rm li}_{k}(z;\gamma) \in \bC$ \\ \hline 


$\chi(\sigma) \in \mathbb{Z}_{\ell}^\times$ & $2 \pi \sqrt{-1} \in \mathbb{C}$ \\ \hline 

  \end{tabular}
\end{table}
\vspace{\stretch{2}}
\pagebreak

\section{Proof of the main results}
In this section,
we prove the main results, Theorem~\ref{main1} and Theorem~\ref{main2}.
We fix
a path
$\gamma_0 \in \pi_1^{\rm top}\left(V_{\rm non\text{-}Fano}^\an;{\vec{v}},(x,y)\right)$.
For $i=1,...,9$,
we set $\delta_i \in \pi_1^{\rm top}\left({\bP}^{1}(\bC) \backslash \{0,1,\infty\};\overrightarrow{01}, f_i^{\an}\left(\vec{v}\right)\right)$ and
$
\gamma_i \in \pi_1^{\rm top}\left({\bP}^{1}(\bC) \backslash \{0,1,\infty\};\overrightarrow{01}, f^{\an}_{i}(x,y)\right)
$
as in TABLE \ref{table_delta} and
(\ref{gammai}).
For a (topological) group $\pi$ (resp. Lie algebra $L$),
we denote by
$\{\Gamma^k \pi\}_k$~(resp.~$\{\Gamma^k L\}_k$)
the lower central series of $\pi$~(resp.~$L$) with
$\Gamma^1 \pi=\pi$~(resp.~$\Gamma^1 L=L$).
We set
\[
{\rm gr}_{\Gamma}^{k} \pi:=\Gamma^k \pi/\Gamma^{k+1} \pi~
(\text{resp.}~{\rm gr}_{\Gamma}^{k} L:=\Gamma^k L/\Gamma^{k+1} L).
\]
The commutator bracket $(g,g'):=gg'g^{-1}{g'}^{-1}~(g,g' \in \pi)$ of $\pi$ then induces the Lie bracket
\[
[\alpha,\beta]:=(\alpha,\beta)~{\rm mod}~\Gamma^{n+m+1} \pi~\in {\rm gr}_{\Gamma}^{n+m} \pi
\quad (\alpha \in {\rm gr}_{\Gamma}^{n} \pi,~\beta \in {\rm gr}_{\Gamma}^{m} \pi)
\]
on the graded sum $\oplus_{k=1}^{\infty} {\rm gr}_{\Gamma}^{k} \pi$.

\subsection{Complex case}
In this subsection,
we derive the Spence--Kummer trilogarithm functional equation. For this purpose,
we also derive 
the dilogarithm functional equations of Schaeffer, Kummer, and Hill.
These functional equations are derived by using Nakamura--Wojtkowiak's tensor and homotopy criteria \cite[Theorem~5.7,~Proposition 5.11]{NW12}.
We fix a $\bC$-rational point $(x,y)$
of $V_{\rm non\text{-}Fano}$.

\begin{figure}[htbp]
\centering
\caption{Structure of the proofs for main results (complex case).}
\label{fig:proof_structure_complex}
\vspace{0.8cm}
\begin{tikzpicture}[
node distance = 0.7cm and 0.5cm,
box/.style = {draw, rectangle, rounded corners, minimum width=4cm, minimum height=1cm,
              align=center, fill=blue!5, inner sep=2ex, font=\small},
plain/.style = {align=center, font=\small},
arrow/.style = {-{Latex}, thick},
swatch/.style = {draw, minimum width=0.45cm, minimum height=0.45cm, inner sep=0pt}
]
 
\node (path) [box, fill=green!15, text width=6cm]
  {\textbf{Path composition} (\ref{gammai})
};
 
\node (chain) [box, below=0.7cm of path, fill=yellow!25, text width=6cm]
  {\textbf{Chain rule} (\ref{chain-c})};
 
\node (eq_c) [box, below=0.7cm of chain, fill=red!15, text width=9.5cm]
  {\textbf{General functional equations for\\
    complex iterated integrals (\ref{c-func-wii}), (\ref{c-func-wii3})}\\[0.5ex]
   {\footnotesize by the tensor-homotopy criteria \cite{NW12}\\
   (RHS$\,{=}\,0$; no error terms)}};
 
\coordinate (split) at ([yshift=-0.6cm]eq_c.south);
 
\node (lhs_route) [plain, below left=0.5cm and 0.6cm of split, text width=5.5cm]
  {\textbf{[ LHS computation route ]}\\[0.5ex]
   Polylog--BCH formula \cite[Proposition 5.9]{NW12} + Computations of
   Drinfeld associators (\ref{c_assoc})};
 
\node (rhs_route) [plain, below right=0.5cm and 0.6cm of split, text width=5.5cm]
  {\textbf{[Lower-degree term route ]}\\[0.5ex]
   Table~\ref{table-main} + Computations of
   Drinfeld associators (\ref{c_assoc})};
 
\node (eq_cii) [box, below=0.5cm of lhs_route, text width=5.5cm]
  {Computations of complex iterated integrals (\ref{cii2}), (\ref{cii3})};
 
\node (compassoc) [box, below=0.5cm of rhs_route, text width=5.5cm]
  {Computations of lower-degree terms in
   ${\bf log}(\Lambda^{-1})$: (\ref{c_compassoc})};
 
\node (logbranch) [box, below=0.6cm of compassoc, text width=5.5cm]
  {Explicit coefficients of lower-degree terms in
   ${\bf log}(\Lambda^{-1})$: (\ref{C-CompC})};
 
 
\node (sub_lhs) [box, below=0.6cm of eq_cii, text width=5.5cm] 
  {Substitution into LHS of (\ref{c-func-wii}), (\ref{c-func-wii3})};
 
\coordinate (join) at ([yshift=-1.3cm]$(sub_lhs)!0.5!(logbranch)$);
 
\node (sub_conv) [box, below=0.9cm of join, text width=10cm]
  {Apply conversion formulas (\ref{liCtoLi}), (\ref{c_assoc})
   and functional equations for complex logarithms (\ref{logbranch})};
 
\node (thm_c) [box, fill=orange!20, below=0.7cm of sub_conv, text width=10cm]
  {\textbf{Main theorems: Theorem \ref{c-dilog}, Theorem \ref{main1}}\\
   (Explicit functional equations for complex polylogarithms)};
 
\draw [arrow] (path) -- (chain);
\draw [arrow] (chain) -- (eq_c);
 
\draw [thick] (eq_c.south) -- (split);
\draw [thick] (split) -| (lhs_route);
\draw [thick] (split) -| (rhs_route);
 
\draw [arrow] (lhs_route) -- (eq_cii);
\draw [arrow] (rhs_route) -- (compassoc);
\draw [arrow] (compassoc) -- (logbranch);
 
\draw [arrow] (eq_cii) -- (sub_lhs);
 
\draw [thick] (sub_lhs.south) |- (join);
\draw [thick] (logbranch.south) |- (join);
\draw [arrow] (join) -- (sub_conv);
 
\draw [arrow] (sub_conv) -- (thm_c);
 
\node (legtitle) [below=1.4cm of thm_c] {\textbf{}};
 
\node (sw1) [swatch, fill=green!15, below=-1.0cm of legtitle, xshift=-2cm] {};
\node (tx1) [right=0.25cm of sw1, anchor=west, font=\small] {Geometric input};
 
\node (sw2) [swatch, fill=yellow!25, below=0.3cm of sw1] {};
\node (tx2) [right=0.25cm of sw2, anchor=west, font=\small] {Key identity};
 
\node (sw3) [swatch, fill=red!15, below=0.3cm of sw2] {};
\node (tx3) [right=0.25cm of sw3, anchor=west, font=\small] {Result from \cite{NW12}};
 
\node (sw4) [swatch, fill=orange!20, below=0.3cm of sw3] {};
\node (tx4) [right=0.25cm of sw4, anchor=west, font=\small] {Final result};
 
\node (sw5) [swatch, fill=blue!5, below=0.3cm of sw4] {};
\node (tx5) [right=0.25cm of sw5, anchor=west, font=\small] {Auxiliary computation};
 
\end{tikzpicture}
\end{figure}

First,
we prepare to verify the tensor homotopy criteria \cite[Theorem~5.7, $({\rm i})_{\bC}$,~$({\rm ii})_{\bC}$]{NW12}.
For $i=1,...,9$,
we write
\[
{f_{i\ast}^{\an}}: \pi_1^{\rm top}\left(V_{\rm non\text{-}Fano}^{\an}, \vec{v}\right) \to \pi_1^{\rm top}\left({\bP}^{1}(\bC) \backslash \{0,1,\infty\}, f_i^{\an}\left(\vec{v}\right)\right)
\]
for the homomorphism induced by $f_{i}^{\an}$
where $f_i~(i=1,...,9)$ are as in (\ref{fi}).
We denote by
\[
\iota_{\delta_{i}}^{\an}: \pi_1^{\rm top}\left({\bP}^{1}(\bC) \backslash \{0,1,\infty\}, f_i^{\an}\left(\vec{v}\right)\right) \xrightarrow{\simeq} \pi_1^{\rm top}\left({\bP}^{1}(\bC) \backslash \{0,1,\infty\}, \overrightarrow{01}\right)
\]
the change-of-basepoint homomorphism defined by
$p \mapsto \delta_{i} \cdot p \cdot {\delta_{i}}^{-1}$
where $\delta_i~(i=1,...,9)$ are as in TABLE \ref{table_delta}.
The images of $B_j~(j=1,...,8)$ in TABLE \ref{table3} under $\iota_{\delta_{i}}^{\an} \circ {f_{i\ast}^{\an}}$ are calculated as shown in TABLE \ref{table-main}.
\renewcommand{\arraystretch}{1.8}
\tabcolsep = 0.3cm
\begin{table}[hbtp]
  \caption{$\iota_{\delta_{i}}^{\an}\left(f^{\an}_i(B_j)\right)$}
  \label{table-main}
  \centering
  \begin{tabular}{|c||c|c|c|c|c|c|c|c|c|}
    \hline
    \# & $B_1$ & $B_2$ & $B_3$ & $B_4$ & $B_5$ & $B_6$ & $B_7$ & $B_8$ \\
    \hline \hline

    $\iota_{\delta_{1}}^{\an}\left(f^{\an}_{1\ast}(\#)\right)$  & $l_0$ & $l_0^{-1} \cdot l_1^{-1}$  & $l_1$ & $l_{\infty}^2$ & $l_0^2$ & $l_0 \cdot l_1 \cdot l_0^{-1}$ & $l_1^{-1} \cdot l_0 \cdot l_1$ & $l_{\infty}$ \\ \hline

    $\iota_{\delta_{2}}^{\an}\left(f^{\an}_{2\ast}(\#)\right)$  & $l_0$ & $l_0$  & $1$ & $1$ & $1$ & $l_1$ & $l_0^{-1} \cdot l_1^{-1}$ & $l_0^{-1} \cdot l_1^{-1}$ \\ \hline

    $\iota_{\delta_{3}}^{\an}\left(f^{\an}_{3\ast}(\#)\right)$  & $l_0$ & $l_0^{-1} \cdot l_1^{-1}$  & $l_1$ & $1$ & $1$ & $1$ & $l_\infty$ & $l_0$ \\ \hline

    $\iota_{\delta_{4}}^{\an}\left(f^{\an}_{4\ast}(\#)\right)$  & $l_0$ & $l_0^{-1} \cdot l_1^{-1}$  &  $l_1$ & $l_\infty$ & $l_0$ & $1$ & $1$ & $1$ \\
    \hline

    $\iota_{\delta_{5}}^{\an}\left(f^{\an}_{5\ast}(\#)\right)$ & $l_0$ & $1$  &  $1$ & $l_\infty$ & $l_0$ & $l_0 \cdot l_1 \cdot l_0^{-1}$ & $1$ & $l_\infty$ \\
    \hline

    $\iota_{\delta_{6}}^{\an}\left(f^{\an}_{6\ast}(\#)\right)$  & $1$ & $1$  &  $l_1$ & $l_\infty$ & $l_0$ & $1$ & $l_1^{-1} \cdot l_0 \cdot l_1$ & $l_\infty$ \\
    \hline

    $\iota_{\delta_{7}}^{\an}\left(f^{\an}_{7\ast}(\#)\right)$  & $1$ & $l_\infty$  &  $1$ & $l_\infty$ & $l_0$ & $l_0 \cdot l_1 \cdot l_0^{-1}$ & $l_0$ & $1$ \\
    \hline

    $\iota_{\delta_{8}}^{\an}\left(f^{\an}_{8\ast}(\#)\right)$  & $l_0$ &  $1$  &  $1$ & $l_1$ & $1$ & $1$ & $l_0^{-1} \cdot l_1^{-1}$ & $1$ \\
    \hline

    $\iota_{\delta_{9}}^{\an}\left(f^{\an}_{9\ast}(\#)\right)$  & $1$ &  $l_0$ &  $1$ & $1$ & $l_1$ & $1$ & $1$ & $l_0^{-1} \cdot l_1^{-1}$ \\
    \hline
  \end{tabular}
\end{table}

\noindent
We focus on (\ref{prodfun}):
\begin{align*} 
\pi_1^{\rm top}\left(V_{\rm non\text{-}Fano}^{\an},\vec{v}\right)^{\rm ab} &= \left<\bar{B}_1,\bar{B}_4\right> \times \left<\bar{B}_2,\bar{B}_3,\bar{B}_5,\bar{B}_6\right>\\
& = F_2^{\ab} \times F_4^{\ab}.
\end{align*}
Since
${\rm rank}_{\mathbb{Z}}({\rm gr}_{\Gamma}^{2}F_2)=1$ and 
${\rm rank}_{\mathbb{Z}}({\rm gr}_{\Gamma}^{2}F_4)=6$
by Witt's formula,
\[
{\rm gr}_{\Gamma}^{2}\pi_1^{\rm top}\left(V_{\rm non\text{-}Fano}^{\an},\vec{v}\right)={\rm gr}_{\Gamma}^{2}F_2 \times {\rm gr}_{\Gamma}^{2}F_4
\]
has a free generating system
\begin{align} \label{gr2gen}
[\bar{B}_1,\bar{B}_4],\quad
[\bar{B}_2,\bar{B}_3],\quad
[\bar{B}_2,\bar{B}_5],\quad
[\bar{B}_2,\bar{B}_6],\quad
[\bar{B}_3,\bar{B}_5],\quad
[\bar{B}_3,\bar{B}_6],\quad
[\bar{B}_5,\bar{B}_6]
\end{align}
as a finitely generated free $\mathbb{Z}$-module.
Since
${\rm rank}_{\mathbb{Z}}({\rm gr}_{\Gamma}^{3}F_2)=2$ and
${\rm rank}_{\mathbb{Z}}({\rm gr}_{\Gamma}^{3}F_4)=20$,
\[
{\rm gr}_{\Gamma}^{3}\pi_1^{\rm top}\left(V_{\rm non\text{-}Fano}^{\an},\vec{v}\right)={\rm gr}_{\Gamma}^{3}F_2 \times {\rm gr}_{\Gamma}^{3}F_4
\]
has a free generating system
\[
\left[\bar{B}_1,\left[\bar{B}_1,\bar{B}_4\right]\right],\quad
\left[\bar{B}_4,\left[\bar{B}_1,\bar{B}_4\right]\right],\quad
\left[\bar{B}_2,\left[\bar{B}_2,\bar{B}_3\right]\right],\quad
\left[\bar{B}_3,\left[\bar{B}_2,\bar{B}_3\right]\right],\quad
\left[\bar{B}_2,\left[\bar{B}_2,\bar{B}_5\right]\right],\quad
\]
\[
\left[\bar{B}_5,\left[\bar{B}_2,\bar{B}_5\right]\right],\quad
\left[\bar{B}_2,\left[\bar{B}_2,\bar{B}_6\right]\right],\quad
\left[\bar{B}_6,\left[\bar{B}_2,\bar{B}_6\right]\right],\quad
\left[\bar{B}_3,\left[\bar{B}_3,\bar{B}_5\right]\right],\quad
\left[\bar{B}_5,\left[\bar{B}_3,\bar{B}_5\right]\right],\quad
\]
\[
\left[\bar{B}_3,\left[\bar{B}_3,\bar{B}_6\right]\right],\quad
\left[\bar{B}_6,\left[\bar{B}_3,\bar{B}_6\right]\right],\quad
\left[\bar{B}_5,\left[\bar{B}_5,\bar{B}_6\right]\right],\quad
\left[\bar{B}_6,\left[\bar{B}_5,\bar{B}_6\right]\right],\quad
\left[\bar{B}_2,\left[\bar{B}_3,\bar{B}_5\right]\right],\quad
\]
\[
\left[\bar{B}_3,\left[\bar{B}_5,\bar{B}_2\right]\right],\quad
\left[\bar{B}_2,\left[\bar{B}_3,\bar{B}_6\right]\right],\quad
\left[\bar{B}_3,\left[\bar{B}_6,\bar{B}_2\right]\right],\quad
\left[\bar{B}_2,\left[\bar{B}_5,\bar{B}_6\right]\right],\quad
\left[\bar{B}_5,\left[\bar{B}_6,\bar{B}_2\right]\right],\quad
\]
\[
\left[\bar{B}_3,\left[\bar{B}_5,\bar{B}_6\right]\right],\quad
\left[\bar{B}_5,\left[\bar{B}_6,\bar{B}_3\right]\right].
\]
Moreover,
for $i=1,...,9$,
by considering the composite
$V_{\rm non\text{-}Fano}^{\an} \xrightarrow{f_i^{\an}} {\bP}^{1}(\bC) \backslash \{0,1,\infty\} \hookrightarrow \bG_{m}(\bC)
$,
we may regard $f_i^{\an}$ as an element of the unit group
\[
f_i^\an \in \left({\CalO}^{\an}\right)^{\times},
\]
where ${\CalO}^{\an}$ is the coordinate ring 
of $V_{\rm non\text{-}Fano}^{\an}$:
\[
{\CalO}^{\an}={\bC}\left[s_1,s_2,\frac{1}{s_1 s_2(1-s_1)(1-s_2)(s_1-s_2)(1-s_1 s_2)}\right].
\]
Then,
\[
f_i^{\an}-1 \in \left({\CalO}^{\an}\right)^{\times}
\]
holds.
Under the above preparations,
the tensor homotopy criteria \cite[Theorem~5.7, $({\rm i})_{\bC}$,~$({\rm ii})_{\bC}$]{NW12} are verified in the proof of Theorem \ref{c-dilog} and Theorem \ref{main1}.

Next,
we prepare to calculate the functional equations of
complex iterated integrals \cite[Theorem~5.7, $({\rm iii})_{\bC}$]{NW12}.
By (\ref{clidef}) and (\ref{c-explicit}),
for $i=1,...,9$,
we have
\begin{equation}
\label{liCtoLi}
\begin{dcases}
\mathrm{li}_0\left(f^{\an}_{i}(x,y);\gamma_i\right)=&-\frac{1}{2 \pi \sqrt{-1}} \log\left(f^{\an}_{i}(x,y);\gamma_i\right), \\
\mathrm{li}_1\left(f^{\an}_{i}(x,y);\gamma_i\right)=&-\frac{1}{2 \pi \sqrt{-1}} \log\left(1-f^{\an}_{i}(x,y);\gamma'_i\right), \\
\mathrm{li}_2\left(f^{\an}_{i}(x,y);\gamma_i\right)=&\frac{1}{4\pi^2} \left(Li_2\left(f^{\an}_{i}(x,y);\gamma_i\right)+\frac12 \log\left(f^{\an}_{i}(x,y);\gamma_i\right)\log\left(1-f^{\an}_{i}(x,y);\gamma'_i\right)\right), \\
\mathrm{li}_3\left(f^{\an}_{i}(x,y);\gamma_i\right)=&\frac{-1}{8\pi^3 \sqrt{-1}} \left(Li_3\left(f^{\an}_{i}(x,y);\gamma_i\right)-\frac12 \log\left(f^{\an}_{i}(x,y);\gamma_i\right) Li_2\left(f^{\an}_{i}(x,y);\gamma_i\right)\right. \\
&\left.-\frac{1}{12} \log^{2}\left(f^{\an}_{i}(x,y);\gamma_i\right) \log\left(1-f^{\an}_{i}(x,y);\gamma'_i\right)\right),
\end{dcases}
\end{equation}
where
$
\gamma'_i \in \pi_1^{\rm top}\left(\mathbb{P}^1(\mathbb{C}) \backslash \{0,1,\infty\}; \overrightarrow{01}, f^{\an}_{i}(x,y)\right)
$
is the path associated
with $\gamma_i$ as in (\ref{gamma'}).
Furthermore,
by calculating the Drinfeld associators (cf. \cite[\S5.4]{NW12}),
we have
\begin{equation} \label{c_assoc}
\begin{dcases}
&\left(-\mathrm{li}_j\left(\ovec{01};\delta_i\right)\right)_{0\le j \le 3}=(0,0,0,0)~~(i=1,2,3,4,8,9), \\
&\left(-\mathrm{li}_j\left(\ovec{0\infty};\delta_5\right)\right)_{0\le j\le 3}=\left(\frac{1}{2},0,0,0\right),\\
&\left(-\mathrm{li}_j\left(\ovec{10};\delta_6\right)\right)_{0\le j \le 3}
=\left(0,0, -\frac{1}{24}, \frac{1}{8\pi^3 \sqrt{-1}}Li_3(1)\right), \\
&\left(-\mathrm{li}_j\left(\ovec{\infty0};\delta_7\right)\right)_{0\le j\le 3}=\left(\frac{1}{2},0,\frac{1}{24},0\right).
\end{dcases}
\end{equation}
Here,
we consider a generating function of complex iterated integrals
\[
\Lambda_{f_i^{\an}\left(\vec{v}\right)}\left(f^{\an}_{i}(x,y); f_i^{\an}\left(\gamma_0\right) \right):=1+\sum_{n=1}^{\infty} \int_{f_i^{\an}\left(\gamma_0\right)} {\underbrace{\omega \ldots \omega}_{n~\text{times}}} \quad (i=1,...,9),
\]
where $\omega=\frac{dz}{z}X+\frac{dz}{z-1}Y$.
Then,
by the path composition~$\gamma_i=\delta_i \cdot f_i^{\an}\left(\gamma_0\right)$,
we obtain an algebraic relation
\begin{align}\label{chain-c}
\Lambda_{\overrightarrow{01}}\left(f^{\an}_{i}(x,y); \gamma_i \right)=
\Lambda_{\overrightarrow{01}}\left(f_i^{\an}\left(\vec{v}\right); \delta_i \right)
\cdot \Lambda_{f_i^{\an}\left(\vec{v}\right)}\left(f^{\an}_{i}(x,y); f_i^{\an}\left(\gamma_0\right) \right)
\quad (i=1,...,9)
\end{align}
in $\mathbb{C} \langle \langle X, Y \rangle \rangle$.
By (\ref{chain-c}), (\ref{c_assoc}) and the polylog-BCH formula \cite[Proposition 5.9]{NW12},
we compute Wojtkowiak's complex iterated integrals (\cite[Definition 4.4]{NW12})
\[
\mathcal{L}^{\varphi_k}_{\C}\left(f^{\an}_{i}(x,y);f_i^{\an}\left(\vec{v}\right), f_i^{\an}\left(\gamma_0\right)\right):=\varphi_{k} \left({\bf log}\left(\Lambda_{f_i^{\an}\left(\vec{v}\right)}\left(f^{\an}_{i}(x,y); f_i^{\an}\left(\gamma_0\right) \right)^{-1}\right)\right) \quad (i=1,...,9)
\]
associated with
$f_i^{\an}\left(\gamma_0\right)$
as follows:
\begin{equation} \label{cii2}
\begin{dcases}
\mathcal{L}^{\varphi_2}_{\C}\left(f^{\an}_{i}(x,y);f_i^{\an}\left(\vec{v}\right), f_i^{\an}\left(\gamma_0\right)\right)
=&\mathrm{li}_2\left(f^{\an}_{i}(x,y);\gamma_i\right)~\quad~(i=1,2,3,4,8,9), 
\\
\mathcal{L}^{\varphi_2}_{\C}\left(f^{\an}_{5}(x,y);f_5^{\an}\left(\vec{v}\right), f_5^{\an}\left(\gamma_0\right)\right)
=&
\mathrm{li}_2(f^{\an}_{5}(x,y);\gamma_5)+\tfrac12 \mathrm{li}_0\left(\overrightarrow{0\infty};\delta_5\right) \mathrm{li}_1(f^{\an}_{5}(x,y);\gamma_5),\\
\mathcal{L}^{\varphi_2}_{\C}\left(f^{\an}_{6}(x,y);f_6^{\an}\left(\vec{v}\right), f_6^{\an}\left(\gamma_0\right)\right)
=&
\mathrm{li}_2(f^{\an}_{6}(x,y);\gamma_6)-\mathrm{li}_2\left(\ovec{10};\delta_6\right),
\\
\mathcal{L}^{\varphi_2}_{\C}\left(f^{\an}_{7}(x,y);f_7^{\an}\left(\vec{v}\right), f_7^{\an}\left(\gamma_0\right)\right)
=&
\mathrm{li}_2(f^{\an}_{7}(x,y);\gamma_7)-\mathrm{li}_2\left(\overrightarrow{\infty0};\delta_7\right)
-\tfrac12
\mathrm{li}_0(f^{\an}_{7}(x,y);\gamma_7)
\mathrm{li}_1\left(\overrightarrow{\infty0};\delta_7\right)\\
&+\tfrac12
\mathrm{li}_1(f^{\an}_{7}(x,y);\gamma_7)
\mathrm{li}_0\left(\overrightarrow{\infty0};\delta_7\right),
\end{dcases}
\end{equation}
\begin{equation} \label{cii3}
\begin{dcases}
\mathcal{L}^{\varphi_3}_{\C}\left(f^{\an}_{i}(x,y);f_i^{\an}\left(\vec{v}\right), f_i^{\an}\left(\gamma_0\right)\right)
=&\mathrm{li}_3\left(f^{\an}_{i}(x,y);\gamma_i\right)~\quad~(i=1,2,3,4,8,9), 
\\
\mathcal{L}^{\varphi_3}_{\C}\left(f^{\an}_{5}(x,y);f_5^{\an}\left(\vec{v}\right), f_5^{\an}\left(\gamma_0\right)\right)
=&
\mathrm{li}_3\left(f^{\an}_{5}(x,y);\gamma_5\right)+\tfrac12 \mathrm{li}_0\left(\overrightarrow{0\infty};\delta_5\right) \mathrm{li}_2\left(f^{\an}_{5}(x,y);\gamma_5\right) 
\\
&+\tfrac{1}{12} \mathrm{li}_0\left(\overrightarrow{0\infty};\delta_5\right) \mathrm{li}_0\left(f^{\an}_{5}(x,y);\gamma_5\right) \mathrm{li}_1\left(f^{\an}_{5}(x,y);\gamma_5\right)\\
&+\tfrac{1}{12} \left(\mathrm{li}_0\left(\overrightarrow{0\infty};\delta_5\right)\right)^2 \mathrm{li}_1\left(f^{\an}_{5}(x,y);\gamma_5\right),\\
\mathcal{L}^{\varphi_3}_{\C}\left(f^{\an}_{6}(x,y);f_6^{\an}\left(\vec{v}\right), f_6^{\an}\left(\gamma_0\right)\right)
=&
\mathrm{li}_3\left(f^{\an}_{6}(x,y);\gamma_6\right)-\mathrm{li}_3\left(\ovec{10};\delta_6\right)
-\tfrac12
\mathrm{li}_0\left(f^{\an}_{6}(x,y);\gamma_6\right)
\mathrm{li}_2\left(\ovec{10};\delta_6\right)
\\
\mathcal{L}^{\varphi_3}_{\C}\left(f^{\an}_{7}(x,y);f_7^{\an}\left(\vec{v}\right), f_7^{\an}\left(\gamma_0\right)\right)
=&
\mathrm{li}_3\left(f^{\an}_{7}(x,y);\gamma_7\right)-\mathrm{li}_3\left(\overrightarrow{\infty0};\delta_7\right)
-\tfrac12
\mathrm{li}_0\left(f^{\an}_{7}(x,y);\gamma_7\right)
\mathrm{li}_2\left(\overrightarrow{\infty0};\delta_7\right)\\
&+\tfrac12
\mathrm{li}_2\left(f^{\an}_{7}(x,y);\gamma_7\right)
\mathrm{li}_0\left(\overrightarrow{\infty0};\delta_7\right) \\
&+\tfrac{1}{12} \left(\mathrm{li}_0\left(\overrightarrow{\infty0};\delta_7\right)\right)^2 \mathrm{li}_1\left(f^{\an}_{7}(x,y);\gamma_7\right)\\
&+\tfrac{1}{12} \mathrm{li}_0\left(\overrightarrow{\infty0};\delta_7\right) \mathrm{li}_0\left(f^{\an}_{7}(x,y);\gamma_7\right) \mathrm{li}_1\left(f^{\an}_{7}(x,y);\gamma_7\right).
\end{dcases}
\end{equation}
We also examine the relationships among $\left\{\log\left(f_{i}^{\an}(x,y);\gamma_i\right), \log\left(1-f_{i}^{\an}(x,y);\gamma'_i\right)\right\}_{i=1,...,9}$.
Let
\[
L\left( \pi_1^{\rm top}\left(V_{\rm non\text{-}Fano}^{\an},\vec{v}\right) \right) \quad \left({\rm resp.~}U\left( \pi_1^{\rm top}\left(V_{\rm non\text{-}Fano}^{\an},\vec{v}\right) \right)\right)
\]
be the complete Lie algebra of $\pi_1^{\rm top}\left(V_{\rm non\text{-}Fano}^{\an},\vec{v}\right)$ over $\bC$ (resp. the complete Hopf algebra, given as the universal enveloping algebra of $L\left( \pi_1^{\rm top}\left(V_{\rm non\text{-}Fano}^{\an},\vec{v}\right) \right)$).
Then,\
there is a natural inclusion
\[
\pi_1^{\rm top}\left(V_{\rm non\text{-}Fano}^{\an},\vec{v}\right) \hookrightarrow L\left( \pi_1^{\rm top}\left(V_{\rm non\text{-}Fano}^{\an},\vec{v}\right) \right),~B_i \mapsto X_i:={\bf log}(B_i)
\]
where $B_i~(i=1,...,8)$ are as in TABLE \ref{table3}.
Each element of $L\left( \pi_1^{\rm top}\left(V_{\rm non\text{-}Fano}^{\an},\vec{v}\right) \right)$
has an expansion as a formal Lie series in 
$X_1,...,X_6$.
Let
\[
\Lambda_{\vec{v}}\left((x,y); \gamma_0\right) \in U\left( \pi_1^{\rm top}\left(V_{\rm non\text{-}Fano}^{\an},\vec{v}\right) \right)
\]
be a horizontal section along $\gamma_0 \in \pi_1^{\rm top}\left(V_{\rm non\text{-}Fano}^\an;{\vec{v}},(x,y)\right)$ of the trivial principal bundle
$V_{\rm non\text{-}Fano}^{\an} \times U\left( \pi_1^{\rm top}\left(V_{\rm non\text{-}Fano}^{\an},\vec{v}\right) \right) \to V_{\rm non\text{-}Fano}^{\an}$~(cf. \cite[\S 1]{W97}, \cite[\S 4.1]{NW12}).
The associated horizontal section 
starting from $\left(\vec{v},1\right)$ over $\gamma_0$ terminates at the point $\left((x,y),\Lambda_{\vec{v}}\left((x,y); \gamma_0\right)\right)$.
We denote by
$L_{n}\left(\pi_1^{\rm top}\left(V_{\rm non\text{-}Fano}^{\an},\vec{v}\right)\right)$
the part with homogeneity degree $n$ 
and by $L_{<n}\left(\pi_1^{\rm top}\left(V_{\rm non\text{-}Fano}^{\an},\vec{v}\right)\right)$
the part whose homogeneity degree is less than $n$.
Then,
we have a decomposition
\[
L\left( \pi_1^{\rm top}\left(V_{\rm non\text{-}Fano}^{\an},\vec{v}\right) \right)=L_{<n}\left(\pi_1^{\rm top}\left(V_{\rm non\text{-}Fano}^{\an},\vec{v}\right)\right) \oplus \Gamma^nL\left( \pi_1^{\rm top}\left(V_{\rm non\text{-}Fano}^{\an},\vec{v}\right) \right)
\]
and
\[
{\boldsymbol \log} \left(\Lambda_{\vec{v}}\left((x,y); \gamma_0\right)^{-1}\right)=\left[{\boldsymbol \log} \left(\Lambda_{\vec{v}}\left((x,y); \gamma_0\right)^{-1}\right)\right]_{<n} \oplus \left[{\boldsymbol \log} \left(\Lambda_{\vec{v}}\left((x,y); \gamma_0\right)^{-1}\right)\right]_{\geq n}.
\]
There is an isomorphism
\[
L_{n}\left(\pi_1^{\rm top}\left(V_{\rm non\text{-}Fano}^{\an},\vec{v}\right)\right) \simeq {\rm gr}_{\Gamma}^{n}\left(\pi_1^{\rm top}\left(V_{\rm non\text{-}Fano}^{\an},\vec{v}\right)\right) \otimes \bC
\]
induced by
$L_{1}\left(\pi_1^{\rm top}\left(V_{\rm non\text{-}Fano}^{\an},\vec{v}\right)\right) \ni X_i \mapsto \bar{B}_i \in {\rm gr}_{\Gamma}^{1}\left(\pi_1^{\rm top}\left(V_{\rm non\text{-}Fano}^{\an},\vec{v}\right)\right)$.
Therefore, we write
\[
\left[{\boldsymbol \log} \left(\Lambda_{\vec{v}}\left((x,y); \gamma_0\right)^{-1}\right)\right]_{<2}
=C_1 X_1+C_2 X_2+C_3 X_3+C_4 X_4+C_5 X_5+C_6 X_6.
\]
Using TABLE \ref{table-main},
we compute
\begin{align} \label{c_compassoc} 
&\left[{\bf log}\left(\Lambda_{f_i^{\an}\left(\vec{v}\right)}\left(f^{\an}_{i}(x,y); f_i^{\an}\left(\gamma_0\right) \right)^{-1}\right)\right]_{<2}\\
&=
f^{\an}_{i\ast}\left(\left[{\boldsymbol \log} \left(\Lambda_{\vec{v}}\left((x,y); \gamma_0\right)^{-1}\right)\right]_{<2}\right)\notag \\
&=
\begin{dcases}
&\left(C_1-C_2-2C_4+2C_5\right)X+\left(-C_2+C_3-2C_4+C_6\right)Y \quad (i=1), \\
&\left(C_1+C_2\right)X+C_6 Y\quad (i=2),\\
&\left(C_1-C_2\right)X+\left(-C_2+C_3\right)Y\quad (i=3),\\
&\left(C_1-C_2-C_4+C_5\right)X+\left(-C_2+C_3-C_4\right)Y\quad (i=4),\\
&\left(C_1-C_4+C_5\right)X+\left(-C_4+C_6\right)Y\quad (i=5),\\
&\left(-C_4+C_5\right)X+\left(C_3-C_4\right)Y\quad (i=6),\\
&\left(-C_2-C_4+C_5\right)X+\left(-C_2-C_4+C_6\right)Y\quad (i=7),\\
&C_1 X+C_4 Y\quad (i=8),\\
&C_2 X+C_5 Y\quad (i=9).
\end{dcases} \notag
\end{align} 
By (\ref{c_compassoc}), (\ref{chain-c}), (\ref{liCtoLi}) and (\ref{c_assoc}),
we obtain
\begin{align}\label{C-CompC}
&C_1=-\log(x;\gamma_8),&
&C_2=-\log(y;\gamma_9),&
&C_3=-\log\left(1-\frac{x}{y};\gamma'_3\right)-\log(y;\gamma_9), \\
&C_4=-\log(1-x;\gamma'_8),&
&C_5=-\log(1-y;\gamma'_9),&
&C_6=-\log(1-xy;\gamma'_2),\notag \\
\notag
\end{align}
and
\begin{equation} \label{logbranch}
\begin{dcases}
\log\left(\frac{x(1-y)^2}{y(1-x)^2};\gamma_1\right)
&=
\log\left(x;\gamma_8\right)+2\log\left(1-y;\gamma'_9\right)-\log\left(y;\gamma_9\right)-2\log\left(1-x;\gamma'_8\right), \\
\log\left(1-\frac{x(1-y)^2}{y(1-x)^2};\gamma'_1\right)
&=
\log\left(1-\frac{x}{y};\gamma'_3\right)+\log(1-xy;\gamma'_2)-2\log\left(1-x;\gamma'_8\right),\\
\log\left(xy;\gamma_2\right)
&=
\log(x;\gamma_8)+\log(y;\gamma_9),\\
\log\left(\frac{x}{y};\gamma_3\right)
&=
\log(x;\gamma_8)-\log(y;\gamma_9),\\
\log\left(\frac{x(1-y)}{y(1-x)};\gamma_4\right)
&=
\log\left(x;\gamma_8\right)+\log\left(1-y;\gamma'_9\right)-\log\left(y;\gamma_9\right)-\log\left(1-x;\gamma'_8\right),\\
\log\left(1-\frac{x(1-y)}{y(1-x)};\gamma'_4\right)
&=
\log\left(1-\frac{x}{y};\gamma'_3\right)-\log\left(1-x;\gamma'_8\right),\\
\log\left(\frac{x(1-y)}{x-1};\gamma_5\right)
&=
\log(x;\gamma_8)+\log\left(1-y;\gamma'_9\right)-\log\left(1-x;\gamma'_8\right)+\pi i,\\
\log\left(1-\frac{x(1-y)}{x-1};\gamma'_5\right)
&=
\log(1-xy;\gamma'_2)-\log\left(1-x;\gamma'_8\right),\\
\log\left(\frac{1-y}{1-x};\gamma_6\right)
&=
\log\left(1-y;\gamma'_9\right)-\log\left(1-x;\gamma'_8\right),\\
\log\left(\frac{1-y}{y(x-1)};\gamma_7\right)
&=
\log\left(1-y;\gamma'_9\right)-\log\left(y;\gamma_9\right)-\log\left(1-x;\gamma'_8\right)+\pi i,\\
\log\left(1-\frac{1-y}{y(x-1)};\gamma'_7\right)
&=
\log(1-xy;\gamma'_2)-\log\left(y;\gamma_9\right)-\log\left(1-x;\gamma'_8\right).
\end{dcases}
\end{equation}

By combining these formulas,
we prove the functional equations of complex polylogarithms.
We set
\[
a_i, b_i, c_i, d_i \in \bZ
\]
as in TABLE \ref{table-abcd},
which will be used for the coefficients in the functional equations to be proved.
\renewcommand{\arraystretch}{1.8}
\tabcolsep = 0.3cm
\begin{table}[hbtp]
  \caption{$a_i, b_i, c_i, d_i$}
  \label{table-abcd}
  \centering
  \begin{tabular}{|c||c|c|c|c|c|c|c|c|c|c|}
    \hline
    $i$ & $1$ & $2$ & $3$ & $4$ & $5$ & $6$ & $7$ & $8$ & $9$ \\
    \hline \hline

    $f^{}_{i}(x,y)$ & $\frac{x(1-y)^2}{y(1-x)^2}$ & $xy$ & $\frac{x}{y}$ & $\frac{x(1-y)}{y(1-x)}$ & $\frac{x(1-y)}{x-1}$ & $\frac{1-y}{1-x}$ & $\frac{1-y}{y(x-1)}$ & $x$ & $y$ \\
    \hline \hline

    $a_i$  & $0$ & $0$  & $-1$ & $1$ & $0$ & $-1$ & $0$ & $1$ & $-1$ \\ \hline

    $b_i$  & $1$ & $0$  & $0$ & $-1$ & $-1$ & $-1$ & $-1$ & $0$ & $0$ \\ \hline

    $c_i$  & $0$ & $1$  & $0$ & $0$ & $-1$ & $0$ & $1$ & $-1$ & $-1$ \\ \hline

    $d_i$  & $1$ & $1$  & $1$ & $-2$ & $-2$ & $-2$ & $-2$ & $-2$ & $-2$ \\
    \hline
  \end{tabular}
\end{table}

\begin{thm}[Functional equations for complex dilogarithms]\label{c-dilog}
Given a $\bC$-rational point
$
(x,y) \in V_{\rm non\text{-}Fano}(\bC)
$
and a path $\gamma_0 \in \pi_1^{\rm top}\left(V_{\rm non\text{-}Fano}^\an;{\vec{v}},(x,y)\right)$,
define the path system $\{\gamma_i\}_{i=1,\ldots,9}$ associated with $\gamma_0$ as in (\ref{gammai}).
Then, the following holds.
\begin{description}
   \item[(a-$\bC$)~Schaeffer equation]
   \[
Li_{2}\left({\frac{x(1-y)}{y(1-x)};\gamma_4}\right)
-Li_{2}\left(y;\gamma_9\right)
+Li_{2}\left(x;\gamma_8\right)
-Li_{2}\left(\frac{x}{y};\gamma_3\right)
\]
\[
-Li_{2}\left({\frac{1-y}{1-x};\gamma_6}\right)
=\log(y;\gamma_9)\log\left(\dfrac{1-y}{1-x};\gamma_6\right)-\dfrac{\pi^2}{6}.
\]
   \item[(b-$\bC$)~Kummer equation]
\[
Li_{2}\left({\frac{x(1-y)^2}{y(1-x)^2};\gamma_1}\right)
-Li_{2}\left({\frac{x(1-y)}{x-1};\gamma_5}\right)
-Li_{2}\left({\frac{1-y}{y(x-1)};\gamma_7}\right)
\]
\[
-Li_{2}\left({\frac{x(1-y)}{y(1-x)};\gamma_4}\right)
-Li_{2}\left({\frac{1-y}{1-x};\gamma_6}\right)
=\frac{1}{2}\log^2(y;\gamma_9).
\]
   \item[(c-$\bC$)~Hill equation]
\[
Li_{2}\left({\frac{1-y}{y(x-1)};\gamma_7}\right)
+Li_{2}\left({xy;\gamma_2}\right)
-Li_{2}\left({x;\gamma_8}\right)
-Li_{2}\left({y;\gamma_9}\right)
-Li_{2}\left({\frac{x(1-y)}{x-1};\gamma_5}\right)
\]
\[
=-\dfrac{\pi^2}{6}
+\log(y;\gamma_9)\log\left(\frac{1-y}{1-x};\gamma_6\right)-\frac{1}{2}\log^2(y;\gamma_9).
\]
\end{description}
\end{thm}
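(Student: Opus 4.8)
The plan is to derive (a-$\bC$), (b-$\bC$), (c-$\bC$) uniformly from the tensor--homotopy criterion of Nakamura--Wojtkowiak \cite[Theorem~5.7,~Proposition~5.11]{NW12}, fed with the three coefficient vectors $(a_i)_i$, $(b_i)_i$, $(c_i)_i$ of TABLE \ref{table-abcd}. Fixing one such vector $n=(n_i)_{i=1,\dots,9}$, the criterion asserts that if the tensor conditions $({\rm i})_{\bC}$ and $({\rm ii})_{\bC}$ of \cite[Theorem~5.7]{NW12} hold for $n$, then $\sum_{i=1}^{9} n_i\,\mathrm{li}_2\!\left(f_i^{\an}(x,y);\gamma_i\right)$ is independent of the reference path $\gamma_0$ --- hence equals a universal constant plus an explicit weight-one correction --- and $({\rm iii})_{\bC}$ pins down that constant and correction. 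Translating the resulting $\mathrm{li}_2$-identity back to a $Li_2$-identity through (\ref{liCtoLi}) (equivalently (\ref{c-explicit})), with $\zeta(2)=\pi^2/6$ entering via (\ref{c_assoc}), will give the stated equations.

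\textbf{Verifying the tensor conditions $({\rm i})_{\bC}$, $({\rm ii})_{\bC}$ --- the geometric core.} This is where the explicit description of $\pi_1^{\rm top}\!\left(V_{\rm non\text{-}Fano}^{\an},\vec{v}\right)$ from Section~2 is used. Having recorded in TABLE \ref{table-main} the pushforwards $\iota_{\delta_i}^{\an}\circ f_{i\ast}^{\an}$ on the meridians $B_1,\dots,B_8$, and noting $f_i^{\an},\ 1-f_i^{\an}\in(\CalO^{\an})^{\times}$, the condition $({\rm ii})_{\bC}$ becomes Zagier's wedge vanishing $\sum_i n_i\,(f_i)\wedge(1-f_i)=0$ in $\Lambda^2\!\left((\CalO^{\an})^{\times}\otimes\bQ\right)$. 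Concretely, each $f_i$ and $1-f_i$ factors (up to a constant) into the six generators $s_1,\ s_2,\ 1-s_1,\ 1-s_2,\ s_1-s_2,\ 1-s_1 s_2$ of $(\CalO^{\an})^{\times}$, the exponents being exactly the zero/pole orders displayed column by column in TABLE \ref{table-main}; equivalently, via (\ref{semiprod})--(\ref{prodfun}) and the free generating system (\ref{gr2gen}) of ${\rm gr}_{\Gamma}^2\pi_1^{\rm top}\!\left(V_{\rm non\text{-}Fano}^{\an},\vec{v}\right)$, $({\rm ii})_{\bC}$ says that the $n$-weighted sum of the $[l_0,l_1]$-coefficients of $f_{i\ast}^{\an}$ on each of the seven generators of (\ref{gr2gen}) vanishes, recalling that ${\rm gr}_{\Gamma}^2\pi_1^{\rm top}\!\left(\bP^1(\bC)\backslash\{0,1,\infty\},\overrightarrow{01}\right)$ is free of rank one on $[l_0,l_1]$. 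I would check these seven vanishings separately for $(a_i)$, $(b_i)$, $(c_i)$; this is precisely the point at which the combinatorial symmetry of the non-Fano arrangement forces the cancellation. Condition $({\rm i})_{\bC}$ is the accompanying weight-one compatibility, read off from the formulas for $C_1,\dots,C_6$ and the branch relations (\ref{logbranch}).

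\textbf{Computing the constant and the weight-one terms ($({\rm iii})_{\bC}$).} With $({\rm i})_{\bC}$, $({\rm ii})_{\bC}$ in hand the combination is $\gamma_0$-independent, so I would evaluate it along a convenient path; by the chain relation (\ref{chain-c}) and the polylog--BCH formula it reduces to the Drinfeld-associator data (\ref{c_assoc}) and the cross terms recorded in (\ref{cii2})--(\ref{cii3}). Only indices $i$ with $n_i\neq0$ and $\delta_i$ non-trivial contribute: for $(a_i)$ this is $i=6$ alone, for $(c_i)$ it is $i=5$ and $i=7$, and for $(b_i)$ it is $i=5,6,7$. Substituting the values (\ref{c_assoc}) --- together with the $\pi i$-shifts carried by $f_5$ and $f_7$ in (\ref{logbranch}) --- will produce the constant $-\pi^2/6$ in (a-$\bC$) and (c-$\bC$) and $0$ in (b-$\bC$), while the $\mathrm{li}_0\mathrm{li}_1$ cross terms of (\ref{cii2}), after the translations (\ref{liCtoLi}) and (\ref{logbranch}), assemble into precisely the $\log(y;\gamma_9)\log\!\left(\frac{1-y}{1-x};\gamma_6\right)$ and $\frac12\log^2(y;\gamma_9)$ terms on the right-hand sides.

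\textbf{Expected main obstacle.} The real work, I expect, lies in the bookkeeping inside $({\rm ii})_{\bC}$ and $({\rm iii})_{\bC}$: the weight-two vanishing must be verified in the full rank-seven module ${\rm gr}_{\Gamma}^2\pi_1^{\rm top}\!\left(V_{\rm non\text{-}Fano}^{\an},\vec{v}\right)\otimes\bC$ --- not merely modulo decomposables --- so that no residual dependence on $\gamma_0$ survives, and every lower-degree correction (the non-trivial $\delta_5,\delta_6,\delta_7$, the $\pi i$ branch shifts, and the Bernoulli factors entering (\ref{c-explicit})) must be tracked carefully so that the right-hand sides come out exactly as stated rather than only up to an additive constant. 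The $\ell$-adic analogs (a,b,c-$\ell$) will then follow by re-running the identical argument with $\mathfrak{f}^{z,\gamma}_{\overrightarrow{01},\sigma}$ replacing $G_{\overrightarrow{01}}(z;\gamma)$ and (\ref{explicit2}) replacing (\ref{c-explicit}).
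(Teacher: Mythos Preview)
Your approach is essentially the paper's, but two points of labeling deserve correction. First, in \cite[Theorem~5.7]{NW12} the conditions $({\rm i})_{\bC}$ and $({\rm ii})_{\bC}$ are two \emph{separate} hypotheses, not one condition with two descriptions: $({\rm i})_{\bC}$ is the homotopy criterion $\sum_i k_i\,\varphi_2\bigl({\rm gr}_{\Gamma}^2(\iota_{\delta_i}^{\an}\circ f_{i\ast}^{\an})\bigr)=0$ (this is the seven-generator check on (\ref{gr2gen}) that you describe, and the paper verifies it via TABLES~\ref{table-sub1}--\ref{table-sub3}), while $({\rm ii})_{\bC}$ is the tensor criterion $\sum_i k_i\, f_i^{\an}\wedge(f_i^{\an}-1)=0$ in $(\CalO^{\an})^{\times}\wedge(\CalO^{\an})^{\times}$, checked by a direct factorization. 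Your sentence calling $({\rm i})_{\bC}$ ``the accompanying weight-one compatibility'' is therefore misplaced. Second, the conclusion $({\rm iii})_{\bC}$ is not a path-independence statement about $\sum_i k_i\,\mathrm{li}_2(f_i^{\an}(x,y);\gamma_i)$ but the direct vanishing
\[
\sum_{i=1}^{9} k_i\,\mathcal{L}^{\varphi_2}_{\bC}\bigl(f_i^{\an}(x,y),f_i^{\an}(\vec{v});f_i^{\an}(\gamma_0)\bigr)=0,
\]
where $\mathcal{L}^{\varphi_2}_{\bC}$ is Wojtkowiak's iterated integral along $f_i^{\an}(\gamma_0)$ based at $f_i^{\an}(\vec{v})$; the passage to $\mathrm{li}_2(\,\cdot\,;\gamma_i)$ is then made via (\ref{cii2}), and this is exactly where the associator data (\ref{c_assoc}) for $\delta_5,\delta_6,\delta_7$ enter. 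With these two adjustments your outline coincides with the paper's proof.
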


\begin{proof}
Let $k_i \in \{a_i,b_i,c_i\}$, where $a_i, b_i$, and $c_i$ $(i=1,\ldots,9)$ are the coefficients given in TABLE \ref{table-abcd}. 
By combining the data scattered across TABLE \ref{table-sub1}, TABLE \ref{table-sub2}, and TABLE \ref{table-sub3}, 
we can readily verify that the homotopy criterion \cite[Theorem~5.7, $({\rm i})_{\bC}$]{NW12}
\[
\sum_{i=1}^{9} k_i \cdot \varphi_2\left(\operatorname{gr}_{\Gamma}^{2}\left( \iota_{\delta_i}^{\an} \circ f_{i\ast}^\an \right) \right)=0
\quad \text{in } \operatorname{Hom}_{\bZ}\left(\operatorname{gr}_{\Gamma}^{2}\left(\pi_1^{\rm top}\left(V_{\rm non\text{-}Fano}^{\an},\vec{v}\right)\right) ,\bZ \right)
\]
holds. Next, a straightforward calculation shows that the tensor criterion \cite[Theorem~5.7, $({\rm ii})_{\bC}$]{NW12}
\[
\sum_{i=1}^{9} k_i \cdot \left( f_i^{\an} \wedge \left(f_i^{\an}-1\right) \right) = 0
\quad \text{in } \left(\left(\mathcal{O}^{\an}\right)^{\times}/\bC^{\times}\right) \wedge \left(\left(\mathcal{O}^{\an}\right)^{\times}/\bC^{\times}\right)
\]
is also satisfied. Consequently, we obtain the following functional equation via \cite[Theorem~5.7, $({\rm iii})_{\bC}$]{NW12}:
\begin{align}\label{c-func-wii}
\sum_{i=1}^{9} k_i \cdot \mathcal{L}^{\varphi_2}_{\bC}\left(f^{\an}_{i}(x,y);f_i^\an \left(\vec{v}\right),f_i^\an \left(\gamma_{0}\right)\right) = 0.
\end{align}
By substituting (\ref{cii2}) into (\ref{c-func-wii}) and applying (\ref{liCtoLi}), (\ref{c_assoc}), and (\ref{logbranch}), 
we reach the desired equations (a-$\bC$), (b-$\bC$), and (c-$\bC$) for each choice of $k_i$, respectively. 
This completes the proof of Theorem \ref{c-dilog}.

\renewcommand{\arraystretch}{1.8}
\tabcolsep = 0.2cm
\begin{table}[hbtp]
  \caption{Homotopy criterion for Schaeffer's equation}
  \label{table-sub1}
  \centering
  \begin{tabular}{|c||c|c|c|c|c|c|c|}
    \hline
    \# & $[\bar{B}_1,\bar{B}_4]$ & $[\bar{B}_2,\bar{B}_3]$ & $[\bar{B}_2,\bar{B}_5]$ & $[\bar{B}_2,\bar{B}_6]$ & $[\bar{B}_3,\bar{B}_5]$ & $[\bar{B}_3,\bar{B}_6]$ & $[\bar{B}_5,\bar{B}_6]$  \\
    \hline \hline
    $\operatorname{gr}_\Gamma^2\left(\iota_{\delta_{4}}^{\an} \circ f^{\an}_{4\ast}\right)(\#)$  & $-[\bar{l}_0,\bar{l}_1]$ & $-[\bar{l}_0,\bar{l}_1]$  & $[\bar{l}_0,\bar{l}_1]$ & $0$ & $-[\bar{l}_0,\bar{l}_1]$ & $0$ & $0$  \\ \hline
    $\operatorname{gr}_\Gamma^2\left(\iota_{\delta_{9}}^{\an} \circ f^{\an}_{9\ast}\right)(\#)$  & $0$ & $0$  & $[\bar{l}_0,\bar{l}_1]$ & $0$ & $0$ & $0$ & $0$  \\ \hline
    $\operatorname{gr}_\Gamma^2\left(\iota_{\delta_{8}}^{\an} \circ f^{\an}_{8\ast}\right)(\#)$  & $[\bar{l}_0,\bar{l}_1]$ & $0$  & $0$ & $0$ & $0$ & $0$ & $0$  \\ \hline
    $\operatorname{gr}_\Gamma^2\left(\iota_{\delta_{3}}^{\an} \circ f^{\an}_{3\ast}\right)(\#)$  & $0$ & $-[\bar{l}_0,\bar{l}_1]$  &  $0$ & $0$ & $0$ & $0$ & $0$  \\ \hline
    $\operatorname{gr}_\Gamma^2\left(\iota_{\delta_{6}}^{\an} \circ f^{\an}_{6\ast}\right)(\#)$ & $0$ & $0$  &  $0$ & $0$ & $-[\bar{l}_0,\bar{l}_1]$ & $0$ & $0$  \\
    \hline
  \end{tabular}
\end{table}

\begin{table}[hbtp]
  \caption{Homotopy criterion for Kummer's equation}
  \label{table-sub2}
  \centering
  \begin{tabular}{|c||c|c|c|c|c|c|c|}
    \hline
    \# & $[\bar{B}_1,\bar{B}_4]$ & $[\bar{B}_2,\bar{B}_3]$ & $[\bar{B}_2,\bar{B}_5]$ & $[\bar{B}_2,\bar{B}_6]$ & $[\bar{B}_3,\bar{B}_5]$ & $[\bar{B}_3,\bar{B}_6]$ & $[\bar{B}_5,\bar{B}_6]$  \\
    \hline \hline
    $\operatorname{gr}_\Gamma^2\left(\iota_{\delta_{1}}^{\an} \circ f^{\an}_{1\ast}\right)(\#)$  & $-2[\bar{l}_0,\bar{l}_1]$ & $-[\bar{l}_0,\bar{l}_1]$  &  $2[\bar{l}_0,\bar{l}_1]$ & $-[\bar{l}_0,\bar{l}_1]$ & $-2[\bar{l}_0,\bar{l}_1]$ & $0$ & $2[\bar{l}_0,\bar{l}_1]$  \\ \hline
    $\operatorname{gr}_\Gamma^2\left(\iota_{\delta_{5}}^{\an} \circ f^{\an}_{5\ast}\right)(\#)$  & $-[\bar{l}_0,\bar{l}_1]$ & $0$  & $0$ & $0$ & $0$ & $0$ & $[\bar{l}_0,\bar{l}_1]$  \\ \hline
    $\operatorname{gr}_\Gamma^2\left(\iota_{\delta_{7}}^{\an} \circ f^{\an}_{7\ast}\right)(\#)$  & $0$ & $0$  & $[\bar{l}_0,\bar{l}_1]$ & $-[\bar{l}_0,\bar{l}_1]$ & $0$ & $0$ & $[\bar{l}_0,\bar{l}_1]$  \\ \hline
    $\operatorname{gr}_\Gamma^2\left(\iota_{\delta_{4}}^{\an} \circ f^{\an}_{4\ast}\right)(\#)$  & $-[\bar{l}_0,\bar{l}_1]$ & $-[\bar{l}_0,\bar{l}_1]$  & $[\bar{l}_0,\bar{l}_1]$ & $0$ & $-[\bar{l}_0,\bar{l}_1]$ & $0$ & $0$  \\ \hline
    $\operatorname{gr}_\Gamma^2\left(\iota_{\delta_{6}}^{\an} \circ f^{\an}_{6\ast}\right)(\#)$ & $0$ & $0$  &  $0$ & $0$ & $-[\bar{l}_0,\bar{l}_1]$ & $0$ & $0$  \\
    \hline
  \end{tabular}
\end{table}

\begin{table}[hbtp]
  \caption{Homotopy criterion for Hill's equation}
  \label{table-sub3}
  \centering
  \begin{tabular}{|c||c|c|c|c|c|c|c|}
    \hline
    \# & $[\bar{B}_1,\bar{B}_4]$ & $[\bar{B}_2,\bar{B}_3]$ & $[\bar{B}_2,\bar{B}_5]$ & $[\bar{B}_2,\bar{B}_6]$ & $[\bar{B}_3,\bar{B}_5]$ & $[\bar{B}_3,\bar{B}_6]$ & $[\bar{B}_5,\bar{B}_6]$  \\
    \hline \hline
    $\operatorname{gr}_\Gamma^2\left(\iota_{\delta_{7}}^{\an} \circ f^{\an}_{7\ast}\right)(\#)$  & $0$ & $0$  & $[\bar{l}_0,\bar{l}_1]$ & $-[\bar{l}_0,\bar{l}_1]$ & $0$ & $0$ & $[\bar{l}_0,\bar{l}_1]$  \\ \hline
    $\operatorname{gr}_\Gamma^2\left(\iota_{\delta_{2}}^{\an} \circ f^{\an}_{2\ast}\right)(\#)$  & $0$ & $0$  & $0$ & $[\bar{l}_0,\bar{l}_1]$ & $0$ & $0$ & $0$  \\ \hline
    $\operatorname{gr}_\Gamma^2\left(\iota_{\delta_{8}}^{\an} \circ f^{\an}_{8\ast}\right)(\#)$  & $[\bar{l}_0,\bar{l}_1]$ & $0$  & $0$ & $0$ & $0$ & $0$ & $0$  \\ \hline
    $\operatorname{gr}_\Gamma^2\left(\iota_{\delta_{9}}^{\an} \circ f^{\an}_{9\ast}\right)(\#)$  & $0$ & $0$  & $[\bar{l}_0,\bar{l}_1]$ & $0$ & $0$ & $0$ & $0$  \\ \hline
    $\operatorname{gr}_\Gamma^2\left(\iota_{\delta_{5}}^{\an} \circ f^{\an}_{5\ast}\right)(\#)$  & $-[\bar{l}_0,\bar{l}_1]$ & $0$  & $0$ & $0$ & $0$ & $0$ & $[\bar{l}_0,\bar{l}_1]$  \\
    \hline
  \end{tabular}
\end{table}
\end{proof}

\begin{proof}[Proof of Theorem \ref{main1}]
First, by making use of the data presented in TABLE \ref{table-main}, we can verify the homotopy criterion \cite[Theorem~5.7, $({\rm i})_{\bC}$]{NW12}:
\begin{align}\label{c-hom}
\sum_{i=1}^{9} d_i \cdot \varphi_3\left(\operatorname{gr}_{\Gamma}^{3}\left( \iota_{\delta_i}^{\an} \circ f_{i\ast}^\an \right) \right) = 0
\quad \text{in } \operatorname{Hom}_{\bZ}\left(\operatorname{gr}_{\Gamma}^{3}\left(\pi_1^{\rm top}\left(V_{\rm non\text{-}Fano}^{\an},\vec{v}\right)\right) ,\bZ \right),
\end{align}
where $d_1,d_2,\ldots,d_9$ are the coefficients shown in TABLE \ref{table-abcd}. Next, explicit calculation confirms that the tensor criterion \cite[Theorem~5.7, $({\rm ii})_{\bC}$]{NW12}
\begin{align}\label{c-ten}
\sum_{i=1}^{9} d_i \cdot \left(f_i^{\an} \otimes \left( f_i^{\an} \wedge \left(f_i^{\an}-1\right) \right) \right) = 0
\quad \text{in } \left(\left(\mathcal{O}^{\an}\right)^{\times}/\bC^{\times}\right) \otimes \left(\left(\left(\mathcal{O}^{\an}\right)^{\times}/\bC^{\times}\right) \wedge \left(\left(\mathcal{O}^{\an}\right)^{\times}/\bC^{\times}\right) \right)
\end{align}
holds. Therefore, we obtain the functional equation \cite[Theorem~5.7, $({\rm iii})_{\bC}$]{NW12}:
\begin{align}\label{c-func-wii3}
\sum_{i=1}^{9} d_i \cdot \mathcal{L}^{\varphi_3}_{\bC}\left(f^{\an}_{i}(x,y);f_i^\an \left(\vec{v}\right),f_i^\an \left(\gamma_{0}\right)\right) = 0.
\end{align}
By substituting (\ref{cii3}) into (\ref{c-func-wii3}) and applying (\ref{liCtoLi}), (\ref{c_assoc}), and (\ref{logbranch}), we obtain the desired equation (d-$\bC$). During this elimination process, the nine ${Li}_2$ terms appearing on the left-hand side of (\ref{c-func-wii3}) cancel out completely via the relations (a-$\bC$), (b-$\bC$), and (c-$\bC$). This completes the proof of Theorem \ref{main1}.
\end{proof}

\subsection{$\ell$-adic Galois case}
In this subsection,
we derive the Spence--Kummer equation for the $\ell$-adic Galois trilogarithm.
To this end, we also establish the Schaeffer, Kummer, and Hill equations for the $\ell$-adic Galois dilogarithm.
These functional equations are derived utilizing Nakamura--Wojtkowiak's tensor and homotopy criteria \cite[Theorem~5.7,~Proposition 5.11]{NW12}.
We fix a $K$-rational point $(x,y)$
of $V_{\rm non\text{-}Fano}$
and take the $K$-rational tangential base point $\vec{v}$ of $V_{\rm non\text{-}Fano}$ as in (\ref{tanbv}).

\begin{figure}[htbp]
\centering
\caption{Structure of the proofs for main results ($\ell$-adic Galois case).}
\label{fig:proof_structure}
\vspace{0.8cm}
\begin{tikzpicture}[
node distance = 0.7cm and 0.5cm,
box/.style = {draw, rectangle, rounded corners, minimum width=4cm, minimum height=1cm,
              align=center, fill=blue!5, inner sep=2ex, font=\small},
plain/.style = {align=center, font=\small},
arrow/.style = {-{Latex}, thick},
swatch/.style = {draw, minimum width=0.45cm, minimum height=0.45cm, inner sep=0pt}
]

\node (path) [box, fill=green!15, text width=6cm]
  {\textbf{Path composition} (\ref{gammai})
};

\node (chain) [box, below=0.7cm of path, fill=yellow!25, text width=6cm]
  {\textbf{Chain rule} (\ref{l-assoc})};

\node (eq428) [box, below=0.7cm of chain, fill=red!15, text width=9.5cm]
  {\textbf{General functional equations for\\ $\ell$-adic iterated integrals (\ref{l-func-dilog}), (\ref{l_ii3})}\\[0.5ex]
   {\footnotesize by the tensor-homotopy criteria \cite{NW12}}\\
   (RHS$\,{=}\,$non-trivial; error terms)};

\coordinate (split) at ([yshift=-0.6cm]eq428.south);

\node (lhs_route) [plain, below left=0.5cm and 0.6cm of split, text width=5.5cm]
  {\textbf{[ LHS computation route ]}\\[0.5ex]
   Polylog--BCH formula \cite[Proposition 5.9]{NW12} + Computations of Ihara associators (\ref{LHSofGalBasic33})};

\node (rhs_route) [plain, below right=0.5cm and 0.6cm of split, text width=5.5cm]
  {\textbf{[ RHS (error term) and Lower-degree term route ]}\\[0.5ex]
   Table~\ref{table-main} + BCH formula + Computations of Ihara associators (\ref{LHSofGalBasic33})};

\node (eq419) [box, below=0.5cm of lhs_route, text width=5.5cm]
  {Computations of $\ell$-adic iterated integrals (\ref{LHSofGalBasic}), (\ref{LHSofGalBasic2})};

\node (table12) [box, below=0.5cm of rhs_route, text width=5.5cm]
  {Computations of lower-degree terms in ${\bf log}({\mathfrak f}_{\sigma}^{-1})$: Table~\ref{table-assoc}};

\node (eq421) [box, below=0.6cm of table12, text width=5.5cm]
  {Explicit coefficients of lower-degree terms in ${\bf log}({\mathfrak f}_{\sigma}^{-1})$:
(\ref{rhobranch2})};

\node (sub_rhs) [box, below=0.6cm of eq421, text width=5.5cm]
  {Substitution into RHS of (\ref{l-func-dilog}), (\ref{l_ii3})};

\node (sub_lhs) [box, text width=5.5cm] at (eq419 |- sub_rhs)
  {Substitution into LHS of (\ref{l-func-dilog}), (\ref{l_ii3})};

\coordinate (join) at ([yshift=-1cm]$(sub_lhs)!0.5!(sub_rhs)$);

\node (sub_conv) [box, below=0.9cm of join, text width=10cm]
  {Apply conversion formulas (\ref{li-GaltoLi}), (\ref{LHSofGalBasic3}) and functional equations for $\ell$-adic Kummer $1$-cocycles (\ref{rhobranch})};


\node (thm02) [box, fill=orange!20, below=0.7cm of sub_conv, text width=10cm]
  {\textbf{Main theorems: Theorem \ref{l-dilog}, Theorem \ref{main2}}\\ (Explicit functional equations for $\ell$-adic Galois polylogarithms)};

\draw [arrow] (path) -- (chain);
\draw [arrow] (chain) -- (eq428);

\draw [thick] (eq428.south) -- (split);
\draw [thick] (split) -| (lhs_route);
\draw [thick] (split) -| (rhs_route);

\draw [arrow] (lhs_route) -- (eq419);
\draw [arrow] (rhs_route) -- (table12);
\draw [arrow] (table12) -- (eq421);

\draw [arrow] (eq419) -- (sub_lhs);
\draw [arrow] (eq421) -- (sub_rhs);

\draw [thick] (sub_lhs.south) |- (join);
\draw [thick] (sub_rhs.south) |- (join);
\draw [arrow] (join) -- (sub_conv);

\draw [arrow] (sub_conv) -- (thm02);

\node (legtitle) [below=1.4cm of thm02] {\textbf{}};

\node (sw1) [swatch, fill=green!15, below=-1.0cm of legtitle, xshift=-2cm] {};
\node (tx1) [right=0.25cm of sw1, anchor=west, font=\small] {Geometric input};

\node (sw2) [swatch, fill=yellow!25, below=0.3cm of sw1] {};
\node (tx2) [right=0.25cm of sw2, anchor=west, font=\small] {Key identity};

\node (sw3) [swatch, fill=red!15, below=0.3cm of sw2] {};
\node (tx3) [right=0.25cm of sw3, anchor=west, font=\small] {Result from \cite{NW12}};

\node (sw4) [swatch, fill=orange!20, below=0.3cm of sw3] {};
\node (tx4) [right=0.25cm of sw4, anchor=west, font=\small] {Final result};

\node (sw5) [swatch, fill=blue!5, below=0.3cm of sw4] {};
\node (tx5) [right=0.25cm of sw5, anchor=west, font=\small] {Auxiliary computation};


\end{tikzpicture}
\end{figure}

First, we set up the verification of the tensor and homotopy criteria \cite[Theorem~5.7, $({\rm i})_{\ell}$,~$({\rm ii})_{\ell}$]{NW12}.
By the comparison map (\ref{comp}),
we regard $\pi_1^{{\ell\text{-\'et}}}\left(V_{\rm non\text{-}Fano},{\vec{v}}\right)$ as the pro-$\ell$ completion of $\pi_1^{\rm top}\left(V_{\rm non\text{-}Fano}^{\an},\vec{v}\right)$ with topological generators $B_j~(j=1,...,8)$ as in (\ref{fun-nf}):
\[
\pi_1^{{\ell\text{-\'et}}}\left(V_{\rm non\text{-}Fano},{\vec{v}}\right)
= \overline{\left\langle
\left. 
\begin{array}{l}
B_1, B_2, B_3, B_4, \\
B_5, B_6, B_7, B_8
\end{array} 
\right|
\begin{array}{l}
(R''1) \sim (R''12)
\end{array}
\right\rangle}.
\]
We write
\[
{f_{i\ast}}: \pi_1^{{\ell\text{-\'et}}}\left(V_{\rm non\text{-}Fano},{\vec{v}}\right) \to \pi_1^{{\ell\text{-\'et}}}\left({\bP}^{1}_{\overline{K}} \backslash \{0,1,\infty\}, f_i\left(\vec{v}\right)\right)
\]
for the homomorphism induced by $f_{i}$
where $f_i~(i=1,...,9)$ are as in (\ref{fi}).
We denote by
\[
\iota_{\delta_{i}}: \pi_1^{{\ell\text{-\'et}}}\left({\bP}^{1}_{\overline{K}} \backslash \{0,1,\infty\}, f_i\left(\vec{v}\right)\right) \xrightarrow{\simeq} \pi_1^{{\ell\text{-\'et}}}\left({\bP}^{1}_{\overline{K}} \backslash \{0,1,\infty\}, \overrightarrow{01}\right)
\]
the change-of-basepoint homomorphism defined by
$p \mapsto \delta_{i} \cdot p \cdot {\delta_{i}}^{-1}$
where $\delta_i~(i=1,...,9)$ are as in TABLE \ref{table_delta}.
Then,
$\left(\iota_{\delta_{i}} \circ {f_{i\ast}}\right)\left(B_j\right)$ are calculated as in the complex case (TABLE \ref{table-main}).
Moreover,
for $i=1,...,9$,
by considering the composite
$V_{\rm non\text{-}Fano} \xrightarrow{f_i} {\bP}^{1}_{\overline{K}} \backslash \{0,1,\infty\} \hookrightarrow \bG_{m,\overline{K}}$,
we may regard $f_i$ as an element of the unit group
\[
f_i \in {\CalO}^{\times},
\]
where ${\CalO}$ is the coordinate ring ${\overline{K}}\left[s_1,s_2,\frac{1}{s_1 s_2(1-s_1)(1-s_2)(s_1-s_2)(1-s_1 s_2)}\right]$ of $V_{\rm non\text{-}Fano}$.
Then,
\[
f_i-1 \in {\CalO}^{\times}
\]
holds.
Under the above preparations,
the tensor homotopy criteria \cite[Theorem~5.7, $({\rm i})_{\ell}$,~$({\rm ii})_{\ell}$]{NW12} are verified in the proof of Theorem \ref{l-dilog} and Theorem \ref{main2}.

Next, we proceed to calculate the functional equations of
$\ell$-adic iterated integrals \cite[Theorem~5.7, $({\rm iii})_{\ell}$]{NW12}.
Let $\sigma \in G_K$.
By (\ref{explicit2}),
for $i=1,...,9$,
we have
\begin{equation} 
\label{li-GaltoLi}
\begin{dcases}
\ell i_0\left(f^{}_{i}(x,y);\gamma_i,\sigma\right)=&\rho_{f^{}_{i}(x,y),\gamma_i}(\sigma), \\
\ell i_1\left(f^{}_{i}(x,y);\gamma_i,\sigma\right)=&\rho_{1-f^{}_{i}(x,y),\gamma'_i}(\sigma), \\
\ell i_2\left(f^{}_{i}(x,y);\gamma_i,\sigma\right)=&{Li}_2^{\ell}\left(f^{}_{i}(x,y);\gamma_i,\sigma\right)+\frac12\rho_{f^{}_{i}(x,y),\gamma_i}(\sigma)\rho_{1-f^{}_{i}(x,y),\gamma'_i}(\sigma), \\
\ell i_3\left(f^{}_{i}(x,y);\gamma_i,\sigma\right)=&{Li}_3^{\ell}\left(f^{}_{i}(x,y);\gamma_i,\sigma\right)
+\frac12 \rho_{f^{}_{i}(x,y),\gamma_i}(\sigma) {Li}_2^{\ell}\left(f^{}_{i}(x,y);\gamma_i,\sigma\right)\\
&+ \frac{1}{12}\left(\rho_{f^{}_{i}(x,y),\gamma_i}(\sigma)\right)^2\rho_{1-f^{}_{i}(x,y),\gamma'_i}(\sigma),
\end{dcases}
\end{equation}
where $\gamma'_i \in \pi_1^{\rm top}\left(\mathbb{P}^1(\mathbb{C}) \backslash \{0,1,\infty\}; \overrightarrow{01}, f^{\an}_{i}(x,y)\right)$ is the path associated
with $\gamma_i$, as in (\ref{gamma'}).
By (\ref{llidef}),
we obtain the following relations of $\ell$-adic Ihara associators (cf. \cite[p.106]{I90}):
\begin{equation}
\label{l-assoc-rel}
\begin{dcases}
&{{\mathfrak f}^{f_5 \left( \vec{v} \right),\delta_5}_{\overrightarrow{01},\sigma}}(l_0,l_1)=l_0^{\frac{1-\chi(\sigma)}{2}}, \\
&{{\mathfrak f}^{f_7 \left( \vec{v} \right),\delta_7}_{\overrightarrow{01},\sigma}}(l_0,l_1)={{\mathfrak f}^{f_6 \left( \vec{v} \right),\delta_6}_{\overrightarrow{01},\sigma}}(l_0,l_{\infty}) \cdot {{\mathfrak f}^{f_5 \left( \vec{v} \right),\delta_5}_{\overrightarrow{01},\sigma}}(l_0,l_1),
\end{dcases}
\end{equation}
where ${\mathfrak f}(l_\ast,l_{\ast'})$ is the image of ${\mathfrak f}$ under the map $\pi_1^{{\ell\text{-\'et}}}\left({\bP}^{1}_{\overline{K}} \backslash \{0,1,\infty\}, \overrightarrow{01}\right) \to \pi_1^{{\ell\text{-\'et}}}\left({\bP}^{1}_{\overline{K}} \backslash \{0,1,\infty\}, \overrightarrow{01}\right)$ given by $l_0,l_1 \mapsto l_\ast,l_{\ast'}$,
and $\chi: G_K \to \bZ_{\ell}^{\times}$ is the $\ell$-adic cyclotomic character.
Then,
we obtain
\begin{align}
\label{LHSofGalBasic33}
\begin{cases}
{\bf log} \left( \left( {{\mathfrak f}^{f_i \left( \vec{v} \right),\delta_i}_{\overrightarrow{01},\sigma}}(l_0,l_1)\right)^{-1}\right)=&0
~\quad~(i=1,2,3,4,8,9), \\
{\bf log} \left( \left( {{\mathfrak f}^{\ovec{0\infty},\delta_5}_{\overrightarrow{01},\sigma}}(l_0,l_1)\right)^{-1}\right)
=&\left(\frac{\chi(\sigma)-1}{2}\right)X, \\
{\bf log} \left( \left( {{\mathfrak f}^{\ovec{10},\delta_6}_{\overrightarrow{01},\sigma}}(l_0,l_1)\right)^{-1}\right)
=&\ell i_{2}\left({\overrightarrow{10};\delta_6,\sigma}\right) [X,Y]+\ell i_{3}\left({\overrightarrow{10};\delta_6,\sigma}\right) [X,[X,Y]]+\cdots,\\
{\bf log} \left( \left( {{\mathfrak f}^{\ovec{\infty0},\delta_7}_{\overrightarrow{01},\sigma}}(l_0,l_1)\right)^{-1}\right)
=&\left(\frac{\chi(\sigma)-1}{2}\right)X-\ell i_{2}\left({\overrightarrow{10};\delta_6,\sigma}\right) [X,Y]\\
&-\frac{1}{2}\left(\frac{\chi(\sigma)+1}{2}\right)\ell i_{2}\left({\overrightarrow{10};\delta_6,\sigma}\right)[X,[X,Y]]+\cdots,
\end{cases}
\end{align}
in $\bQ_{\ell}\langle \langle X,
Y \rangle \rangle$,
that is,
\begin{align}
\label{LHSofGalBasic3}
\begin{cases}
&\left(-\ell i_j(\ovec{01};\delta_i,\sigma)\right)_{0\le j \le 3}=(0,0,0,0)~\quad~(i=1,2,3,4,8,9), \\
&\left(-\ell i_j(\ovec{0\infty};\delta_5,\sigma)\right)_{0\le j \le 3}
=\left(\frac{1-\chi(\sigma)}{2},0,0,0\right), \\
&\left(-\ell i_j(\ovec{10};\delta_6,\sigma)\right)_{0\le j\le 3}
=\left(0,0,-\ell i_2\left(\ovec{10};\delta_6,\sigma\right),-\ell i_3\left(\ovec{10};\delta_6,\sigma\right)\right),\\
&\left(-\ell i_j(\ovec{\infty0};\delta_7,\sigma)\right)_{0\le j\le 3}=\left(
\frac{1-\chi(\sigma)}{2},0,\ell i_2\left(\ovec{10};\delta_6,\sigma\right),\frac{1}{2}\left(\frac{1+\chi(\sigma)}{2}\right)\ell i_2\left(\ovec{10};\delta_6,\sigma\right)\right).
\end{cases}
\end{align}

\begin{rem}\label{imprem1}
The second formula in (\ref{l-assoc-rel}) marks the crucial point where the non-linearity of the Baker–Campbell–Hausdorff sum
\begin{align*}
Z={\bf log}(l_\infty)={\bf log}({\bf exp}(-Y){\bf exp}(-X))
\end{align*}
mentioned in Remark \ref{Zl-integral},
first enters the computation in this paper.
Specifically,
the substitution
$Y \mapsto Z = -Y - X - \frac{1}{2}[X,Y]+\cdots$
in the expression for ${\bf log}\left(\left({{\mathfrak f}^{f_6 \left( \vec{v} \right),\delta_6}_{\overrightarrow{01},\sigma}}(l_0,l_{1})\right)^{-1}\right)$
yields
\begin{align*}
{\bf log}\left(\left({{\mathfrak f}^{f_6 \left( \vec{v} \right),\delta_6}_{\overrightarrow{01},\sigma}}(l_0,l_{\infty})\right)^{-1}\right)
& = \ell i_{2}\left({\overrightarrow{10};\delta_6,\sigma}\right) [X,Z]+\cdots \\
& = -\ell i_{2}\left({\overrightarrow{10};\delta_6,\sigma}\right)[X,Y]
- \frac{1}{2}\ell i_{2}\left({\overrightarrow{10};\delta_6,\sigma}\right)[X,[X,Y]]+\cdots,
\end{align*}
and a further application of the BCH formula to the product in the second formula of (\ref{l-assoc-rel}) then produces the non-trivial term
\[
-\frac{1}{2}\left(\frac{\chi(\sigma)+1}{2}\right)\ell i_{2}\left({\overrightarrow{10};\delta_6,\sigma}\right)[X,[X,Y]]
\]
in
the fourth case of (\ref{LHSofGalBasic33}).
In contrast,
in the complex setting,
the analogous computation uses the linear relation
$Z= -Y - X$,
so no such higher-order term appears and the corresponding value in the fourth case of (\ref{c_assoc}) is simply $0$,
as the higher-order commutator terms are completely absent.
\end{rem}

For $i=1,\ldots,9$, 
we consider the path $\gamma_i \in \pi_1^{\rm top}\left(\mathbb{P}^1(\mathbb{C}) \backslash \{0,1,\infty\}; \overrightarrow{01}, f^{\an}_{i}(x,y)\right)$ in (\ref{gammai}) as a pro-$\ell$ \'etale path $\gamma_i \in \pi_1^{\ell \text{-\'et}}\left(\mathbb{P}^1_{\overline{K}} \backslash \{0,1,\infty\}; \overrightarrow{01}, f^{}_{i}(x,y)\right)$ by the comparison map (\ref{comp}).
Then,
we have~$\gamma_i=\delta_i \cdot f_i\left(\gamma_0\right) \in \pi_1^\ellet\left(\mathbb{P}^1_{\overline{K}} \backslash \{0,1,\infty\}; \overrightarrow{01}, f^{}_{i}(x,y)\right)$.
By this path composition and the definition \ref{f},
we obtain an algebraic relation
\begin{align}\label{l-assoc}
{\mathfrak f}^{f^{}_{i}(x,y),\gamma_i}_{\overrightarrow{01},\sigma}
=\left(\left(\iota_{\delta_{i}} \circ f_{i\ast}\right)\left( {{\mathfrak f}^{(x,y),\gamma_0}_{\vec{v},\sigma}} \right)\right) \cdot {{\mathfrak f}^{f_i \left( \vec{v} \right),\delta_i}_{\overrightarrow{01},\sigma}} \quad (i=1,...,9).
\end{align}
By (\ref{l-assoc}), (\ref{LHSofGalBasic3}) and the polylog-BCH formula \cite[Proposition 5.9]{NW12},
we compute Wojtkowiak's native $\ell$-adic iterated integral (cf. \cite[Definition 4.7]{NW12})
\begin{align*}
\mathcal{L}^{\varphi_{k}(f_i)_{\vec{l}}}_{\rm nv}\left(f^{}_{i}(x,y),f_i\left(\vec{v}\right);f_i \left(\gamma_{0}\right),\sigma\right)
:=\varphi_{k,\vec{l}} \left({\boldsymbol \log}\left(\left(\iota_{\delta_{i}} \circ f_{i\ast}\right) \left({{\mathfrak f}^{(x,y),\gamma_0}_{\vec{v},\sigma}}\right)^{-1}\right)\right)
\quad (i=1,...,9)
\end{align*}
associated with
$f_i \left(\gamma_{0}\right)$
as follows:
\begin{align}
\label{LHSofGalBasic}
\begin{cases}
\mathcal{L}^{\varphi_2(f_i)_{\vec{l}}}_{\rm nv}\left(f^{}_{i}(x,y),f_i\left(\vec{v}\right);f_i \left(\gamma_{0}\right),\sigma\right) 
&={\ell i}_2(f^{}_{i}(x,y);\gamma_i,\sigma)~\quad~(i=1,2,3,4,8,9),
 \\
\mathcal{L}^{\varphi_2(f_5)_{\vec{l}}}_{\rm nv}\left(f^{}_{5}(x,y),f_5\left(\vec{v}\right);f_5 \left(\gamma_{0}\right),\sigma\right) 
&= {\ell i}_2(f^{}_{5}(x,y);\gamma_5,\sigma)-\tfrac12 {\ell i}_0\left(\overrightarrow{0\infty};\delta_5,\sigma\right) {\ell i}_1(f^{}_{5}(x,y);\gamma_5,\sigma),
\\
\mathcal{L}^{\varphi_2(f_6)_{\vec{l}}}_{\rm nv}\left(f^{}_{6}(x,y),f_6\left(\vec{v}\right);f_6 \left(\gamma_{0}\right),\sigma\right) 
&= 
{\ell i}_2(f^{}_{6}(x,y);\gamma_6,\sigma)-{\ell i}_2\left(\ovec{10};\delta_6,\sigma\right)\\
&\quad +\tfrac12
{\ell i}_0(f^{}_{6}(x,y);\gamma_6,\sigma)
{\ell i}_1\left(\ovec{10};\delta_6,\sigma\right),\\
\mathcal{L}^{\varphi_2(f_7)_{\vec{l}}}_{\rm nv}\left(f^{}_{7}(x,y),f_7\left(\vec{v}\right);f_7 \left(\gamma_{0}\right),\sigma\right) 
&= 
{\ell i}_2(f^{}_{7}(x,y);\gamma_7,\sigma)-{\ell i}_2\left(\ovec{\infty0};\delta_7,\sigma\right)\\
&\quad-\tfrac12
{\ell i}_1(f^{}_{7}(x,y);\gamma_7,\sigma)
{\ell i}_0\left(\ovec{\infty0};\delta_7,\sigma\right)\\
&\quad-\tfrac12
{\ell i}_0(f^{}_{7}(x,y);\gamma_7,\sigma)
{\ell i}_1\left(\ovec{\infty0};\delta_7,\sigma\right)
\end{cases}
\end{align}
and
\begin{align}
\label{LHSofGalBasic2}
\begin{cases}
\mathcal{L}^{\varphi_3(f_i)_{\vec{l}}}_{\rm nv}\left(f^{}_{i}(x,y),f_i\left(\vec{v}\right);f_i \left(\gamma_{0}\right),\sigma\right) 
&={\ell i}_3(f^{}_{i}(x,y);\gamma_i,\sigma)~\quad~(i=1,2,3,4,8,9),
 \\
\mathcal{L}^{\varphi_3(f_5)_{\vec{l}}}_{\rm nv}\left(f^{}_{5}(x,y),f_5\left(\vec{v}\right);f_5 \left(\gamma_{0}\right),\sigma\right) 
&= {\ell i}_3(f^{}_{5}(x,y);\gamma_5,\sigma)-\tfrac12 {\ell i}_0\left(\overrightarrow{0\infty};\delta_5,\sigma\right) {\ell i}_2(f^{}_{5}(x,y);\gamma_5,\sigma)
\\
&\quad+\tfrac{1}{12} \left({\ell i}_0\left(\overrightarrow{0\infty};\delta_5,\sigma\right)\right)^2 {\ell i}_1(f^{}_{5}(x,y);\gamma_5,\sigma)\\
&\quad+\tfrac{1}{12} {\ell i}_0\left(\overrightarrow{0\infty};\delta_5,\sigma\right) {\ell i}_0(f^{}_{5}(x,y);\gamma_5,\sigma) {\ell i}_1(f^{}_{5}(x,y);\gamma_5,\sigma),\\
\mathcal{L}^{\varphi_3(f_6)_{\vec{l}}}_{\rm nv}\left(f^{}_{6}(x,y),f_6\left(\vec{v}\right);f_6 \left(\gamma_{0}\right),\sigma\right) 
&= 
{\ell i}_3(f^{}_{6}(x,y);\gamma_6,\sigma)-{\ell i}_3\left(\ovec{10};\delta_6,\sigma\right)\\
&\quad +\tfrac12
{\ell i}_0(f^{}_{6}(x,y);\gamma_6,\sigma)
{\ell i}_2\left(\ovec{10};\delta_6,\sigma\right),\\
\mathcal{L}^{\varphi_3(f_7)_{\vec{l}}}_{\rm nv}\left(f^{}_{7}(x,y),f_7\left(\vec{v}\right);f_7 \left(\gamma_{0}\right),\sigma\right) 
&= 
{\ell i}_3(f^{}_{7}(x,y);\gamma_7,\sigma)-{\ell i}_3\left(\ovec{\infty0};\delta_7,\sigma\right)\\
&\quad -\tfrac12
{\ell i}_2(f^{}_{7}(x,y);\gamma_7,\sigma)
{\ell i}_0\left(\ovec{\infty0};\delta_7,\sigma\right)\\
&\quad-\tfrac12
{\ell i}_0(f^{}_{7}(x,y);\gamma_7,\sigma)
{\ell i}_2\left(\ovec{\infty0};\delta_7,\sigma\right)\\
&\quad+\tfrac{1}{12} \left({\ell i}_0\left(\overrightarrow{\infty0};\delta_7,\sigma\right)\right)^2 {\ell i}_1(f^{}_{7}(x,y);\gamma_7,\sigma)\\
&\quad+\tfrac{1}{12} {\ell i}_0\left(\overrightarrow{\infty0};\delta_7,\sigma\right) {\ell i}_0(f^{}_{7}(x,y);\gamma_7,\sigma) {\ell i}_1(f^{}_{7}(x,y);\gamma_7,\sigma).
\end{cases}
\end{align}

We write
\[
L\left( \pi_1^{{\ell\text{-\'et}}}\left(V_{\rm non\text{-}Fano},{\vec{v}}\right) \right)
\]
for the complete $\ell$-adic Lie algebra of $\pi_1^{{\ell\text{-\'et}}}\left(V_{\rm non\text{-}Fano},{\vec{v}}\right)$ over $\bQ_{\ell}$.
Then,
there is a natural inclusion
\[
\pi_1^{{\ell\text{-\'et}}}\left(V_{\rm non\text{-}Fano},{\vec{v}}\right) \hookrightarrow L\left( \pi_1^{{\ell\text{-\'et}}}\left(V_{\rm non\text{-}Fano},{\vec{v}}\right) \right),~B_i \mapsto X_i:={\bf log}(B_i).
\]
Each element of $L\left( \pi_1^{{\ell\text{-\'et}}}\left(V_{\rm non\text{-}Fano},{\vec{v}}\right) \right)$
has an expansion as a formal Lie series in 
$X_1,...,X_6$.
We denote by
\[
L_{n}\left(\pi_1^{{\ell\text{-\'et}}}\left(V_{\rm non\text{-}Fano},{\vec{v}}\right)\right)
\]
the part with homogeneity degree $n$
and denote by
\[
L_{<n}\left(\pi_1^{{\ell\text{-\'et}}}\left(V_{\rm non\text{-}Fano},{\vec{v}}\right)\right)
\]
the part whose homogeneity degree is less than $n$.
Then,
we have a decomposition
\[
L\left( \pi_1^{{\ell\text{-\'et}}}\left(V_{\rm non\text{-}Fano},{\vec{v}}\right) \right)=L_{<n}\left(\pi_1^{{\ell\text{-\'et}}}\left(V_{\rm non\text{-}Fano},{\vec{v}}\right)\right) \oplus \Gamma^nL\left( \pi_1^{{\ell\text{-\'et}}}\left(V_{\rm non\text{-}Fano},{\vec{v}}\right) \right)
\]
and
\[
{\boldsymbol \log} \left(\left({{\mathfrak f}^{(x,y),\gamma_0}_{\vec{v},\sigma}}\right)^{-1}\right)
=\left[{\boldsymbol \log} \left(\left({{\mathfrak f}^{(x,y),\gamma_0}_{\vec{v},\sigma}}\right)^{-1}\right)\right]_{<n}+\left[{\boldsymbol \log} \left(\left({{\mathfrak f}^{(x,y),\gamma_0}_{\vec{v},\sigma}}\right)^{-1}\right)\right]_{\geq n}.
\]
Here, 
there is an isomorphism
\[
L_{n}\left(\pi_1^{{\ell\text{-\'et}}}\left(V_{\rm non\text{-}Fano},{\vec{v}}\right)\right) \simeq {\rm gr}_{\Gamma}^{n}\left(\pi_1^{{\ell\text{-\'et}}}\left(V_{\rm non\text{-}Fano},{\vec{v}}\right)\right) \otimes \bQ_{\ell}
\]
induced by
$
L_{1}\left(\pi_1^{{\ell\text{-\'et}}}\left(V_{\rm non\text{-}Fano},{\vec{v}}\right)\right) \ni X_i \mapsto \bar{B}_i \in {\rm gr}_{\Gamma}^{1}\left(\pi_1^{{\ell\text{-\'et}}}\left(V_{\rm non\text{-}Fano},{\vec{v}}\right)\right)$.
Therefore,
we write
\begin{align*}
\left[{\boldsymbol \log} \left(\left({{\mathfrak f}^{(x,y),\gamma_0}_{\vec{v},\sigma}}\right)^{-1}\right)\right]_{<2}
&=C_1 X_1+C_2 X_2+C_3 X_3+C_4 X_4+C_5 X_5+C_6 X_6
\end{align*}
and
\begin{align*}
\left[{\boldsymbol \log} \left(\left({{\mathfrak f}^{(x,y),\gamma_0}_{\vec{v},\sigma}}\right)^{-1}\right)\right]_{<3}
&=C_1 X_1+C_2 X_2+C_3 X_3+C_4 X_4+C_5 X_5+C_6 X_6\\
&+C_7 [X_1,X_4]+C_8 [X_2,X_3]+C_9 [X_2,X_5]+C_{10} [X_2,X_6]\\
&+C_{11} [X_3,X_5]+C_{12} [X_3,X_6]+C_{13} [X_5,X_6].
\end{align*}
Using the
Baker--Campbell--Hausdorff formula, TABLE \ref{table-main}, (\ref{l-assoc}) and (\ref{LHSofGalBasic33}),
we compute
\begin{align}\label{diffcomp}
\left(\iota_{\delta_{i}} \circ f_{i\ast}\right)\left(\left[{\boldsymbol \log}\left(\left({{\mathfrak f}^{(x,y),\gamma_0}_{\vec{v},\sigma}}\right)^{-1}\right)\right]_{<n}\right) \in {\rm Lie}_{\bQ_{\ell}}\langle \langle X,
Y \rangle \rangle
\end{align}
for $n=2,3$.
Consequently,
we obtain TABLE \ref{table-assoc}.

\begin{rem}\label{imprem2}
The computation of (\ref{diffcomp}) represents the second instance in this paper where the non-linearity of the BCH sum
$Z={\bf log}({\bf exp}(-Y){\bf exp}(-X))$,
mentioned in Remark \ref{Zl-integral},
re-emerges as a critical factor, following the fourth case of (\ref{LHSofGalBasic33}).
When applying $\iota_{\delta_{i}} \circ f_{i\ast}$ to
\[
\left[{\boldsymbol \log}\left(\left({{\mathfrak f}^{(x,y),\gamma_0}_{\vec{v},\sigma}}\right)^{-1}\right)\right]_{<n},
\]
one must express the image of each generator $B_j$ in terms of $l_0$, $l_1$ and $l_\infty$ (cf. TABLE \ref{table-main}).
For all cases whose image involves $l_\infty$, one must substitute
\[
Z = -Y - X - \frac{1}{2}[X,Y] -\frac{1}{12}\left([X,[X,Y]]+[Y,[Y,X]]\right) + \cdots.
\] 
The higher-order terms
in this expansion---which vanish in the complex setting where the linearity $Z=-Y-X$ holds---produce the non-trivial $[X,Y]$- and $[X,[X,Y]]$-coefficients that appear in the $\ell$-adic Galois setting (cf. TABLE \ref{table-assoc}) but have no counterparts in the complex setting.
In particular,
the weight-2 contributions
\[
C_7,~C_8,~C_9,~C_{10},~C_{11},~C_{13}
\]
appearing in the $[X,Y]$-column of TABLE \ref{table-assoc} originate precisely from these higher-order BCH terms, 
and it is their propagation to weight 3 that ultimately gives rise to the $\ell$-adic error terms,
i.e.,
the right-hand side of (\ref{l-func-dilog}) and (\ref{l_ii3}).
In contrast,
in the complex side,
these corresponding weight-2 contributions trivially vanish,
which is exactly why the right-hand side of the functional equations (\ref{c-func-wii}) and (\ref{c-func-wii3}) ultimately becomes zero.
\end{rem}

By (\ref{l-assoc}), (\ref{LHSofGalBasic33}) and TABLE \ref{table-assoc},
we obtain
\begin{equation} \label{rhobranch2}
\begin{dcases}
&C_1=\rho_{x,\gamma_8}(\sigma), \quad \quad \quad
C_2=\rho_{y,\gamma_9}(\sigma), \quad \quad \quad
C_3=\rho_{1-\frac{x}{y},\gamma'_3}(\sigma)+\rho_{y,\gamma_9}(\sigma),\\
&C_4=\rho_{1-x,\gamma'_8}(\sigma),\quad \quad \quad
C_5=\rho_{1-y,\gamma'_9}(\sigma), \quad \quad \quad
C_6=\rho_{1-xy,\gamma'_2}(\sigma), \\
&C_7={\ell i}_2(x;\gamma_8,\sigma), \quad \quad \quad
C_8=\frac{1}{2}\rho_{y,\gamma_9}(\sigma)-{\ell i}_2\left(\frac{x}{y};\gamma_3,\sigma\right),\\
&C_9={\ell i}_2(y;\gamma_9,\sigma), \quad \quad
C_{10}={\ell i}_2(xy;\gamma_2,\sigma),\\
&C_{11}=-\frac{1}{2}\rho_{1-x,\gamma'_8}(\sigma)-{\ell i}_2\left(\frac{1-y}{1-x};\gamma_6,\sigma\right), \\
&C_{13}=\frac{1}{2}\rho_{1-x,\gamma'_8}(\sigma)-\rho_{1-xy,\gamma'_2}(\sigma)+{\ell i}_2(x;\gamma_8,\sigma)+{\ell i}_2\left(\frac{x(1-y)}{x-1};\gamma_5,\sigma\right),
\end{dcases}
\end{equation} 
and
\begin{equation} \label{rhobranch}
\begin{dcases}
\rho_{\frac{x(1-y)^2}{y(1-x)^2};\gamma_1}(\sigma)
&=
\rho_{x,\gamma_8}(\sigma)+2\rho_{1-y,\gamma'_9}(\sigma)-\rho_{y,\gamma_9}(\sigma)-2\rho_{1-x,\gamma'_8}(\sigma), \\
\rho_{1-\frac{x(1-y)^2}{y(1-x)^2};\gamma'_1}(\sigma)
&=
\rho_{1-\frac{x}{y},\gamma'_3}(\sigma)+\rho_{1-xy,\gamma'_2}(\sigma)-2\rho_{1-x,\gamma'_8}(\sigma),\\
\rho_{xy;\gamma_2}(\sigma)
&=
\rho_{x,\gamma_8}(\sigma)+\rho_{y,\gamma_9}(\sigma),\\
\rho_{\frac{x}{y};\gamma_3}(\sigma)
&=
\rho_{x,\gamma_8}(\sigma)-\rho_{y,\gamma_9}(\sigma),\\
\rho_{\frac{x(1-y)}{y(1-x)};\gamma_4}(\sigma)
&=
\rho_{x,\gamma_8}(\sigma)+\rho_{1-y,\gamma'_9}(\sigma)-\rho_{y,\gamma_9}(\sigma)-\rho_{1-x,\gamma'_8}(\sigma),\\
\rho_{1-\frac{x(1-y)}{y(1-x)};\gamma'_4}
&=
\rho_{1-\frac{x}{y},\gamma'_3}(\sigma)-\rho_{1-x,\gamma'_8}(\sigma),\\
\rho_{\frac{x(1-y)}{x-1};\gamma_5}(\sigma)
&=
\rho_{x,\gamma_8}(\sigma)+\rho_{1-y,\gamma'_9}(\sigma)-\rho_{1-x,\gamma'_8}(\sigma)+\left(\frac{\chi(\sigma)-1}{2}\right),\\
\rho_{1-\frac{x(1-y)}{x-1};\gamma'_5}(\sigma)
&=
\rho_{1-xy,\gamma'_2}(\sigma)-\rho_{1-x,\gamma'_8}(\sigma),\\
\rho_{\frac{1-y}{1-x};\gamma_6}(\sigma)
&=
\rho_{1-y,\gamma'_9}(\sigma)-\rho_{1-x,\gamma'_8}(\sigma),\\
\rho_{\frac{1-y}{y(x-1)};\gamma_7}(\sigma)
&=
\rho_{1-y,\gamma'_9}(\sigma)-\rho_{y,\gamma_9}(\sigma)-\rho_{1-x,\gamma'_8}(\sigma)+\left(\frac{\chi(\sigma)-1}{2}\right),\\
\rho_{1-\frac{1-y}{y(x-1)};\gamma'_7}(\sigma)
&=
\rho_{1-xy,\gamma'_2}(\sigma)-\rho_{y,\gamma_9}(\sigma)-\rho_{1-x,\gamma'_8}(\sigma).
\end{dcases}
\end{equation} 
Note that $C_{12}$ remains undetermined.

\begin{rem}
We explain why $C_{12}$ remains undetermined in the above computation.
Recall that $C_{12}$ is the coefficient of $$[X_3, X_6] = [{\bf log}\left(B_3\right), {\bf log}\left(B_6\right)]$$
in $\left[{\bf log}\left({\mathfrak f}^{(x,y),\gamma_0}_{\vec{v},\sigma}\right)^{-1}\right]_{<3}$
where $B_3$ and $B_6$ are the meridians of the divisors $s_1 = s_2$ and
$1 = s_1 s_2$ on $V_{\mathrm{non\text{-}Fano}}$, respectively.
As one can verify from TABLE~\ref{table-assoc},
the coefficient $C_{12}$ does not appear in
\[
(\iota_{\delta_i} \circ f_{i*})\left(\left[{\bf log}\left({\mathfrak f}^{(x,y),\gamma_0}_{\vec{v},\sigma}\right)^{-1}\right]_{<3}\right)
\]
for any $i = 1, \ldots, 9$.
For $C_{12}$ to appear in TABLE~\ref{table-assoc},
the images $\mathrm{gr}^1_\Gamma(\iota_{\delta_i} \circ f_{i*})\left(\bar{B}_3\right)$ and $\mathrm{gr}^1_\Gamma(\iota_{\delta_i} \circ f_{i*})\left(\bar{B}_6\right)$ would need to be linearly independent in
\[
\mathrm{gr}^1_\Gamma\,\pi_1^{\mathrm{top}} \left(\mathbb{P}^1(\mathbb{C})\setminus\{0,1,\infty\},\overrightarrow{01}\right) \cong \mathbb{Z}\bar{l}_0 \oplus \mathbb{Z}\bar{l}_1
\]
for some $i$.
However, inspecting TABLE~\ref{table-main},
one sees that this independence always fails:
for $i \in \{2,5,7,8,9\}$ the image of $B_3$ is trivial,
for $i \in \{3,4,6\}$ the image of $B_6$ is trivial,
and for $i=1$ both images equal $\bar{l}_1$ in $\mathrm{gr}^1_\Gamma$,
so their Lie bracket vanishes.
Hence $C_{12}$ does not appear in any of the functional equations (\ref{l-func-dilog}) and (\ref{l_ii3}), 
and its value plays no role in the proofs of Theorem~\ref{l-dilog} and Theorem~\ref{main2}. 
Its undetermined status is therefore not a gap in the computation but rather an inherent structural feature:
the underlying nine morphisms $\{f_i\}_{i=1}^9$ governing the Spence--Kummer equation are fundamentally insensitive to the $[X_3, X_6]$-component.
\end{rem}

\newpage
\vspace*{\stretch{1}}
\renewcommand{\arraystretch}{2}
\tabcolsep = 0.2cm
\begin{table}[hbtp]
  \caption{Computation of ${\bf log}({\mathfrak f}_{\sigma}^{-1})$}
  \label{table-assoc}
  \centering
  \begin{tabular}{|c||c|c|}
    \hline
    $i$ & \tiny$\left(\iota_{\delta_{i}} \circ f_{i\ast}\right)\left(\left[{\boldsymbol \log}\left(\left({{\mathfrak f}^{(x,y),\gamma_0}_{\vec{v},\sigma}}\right)^{-1}\right)\right]_{<2}\right)$ & \tiny$ \left(\iota_{\delta_{i}} \circ f_{i\ast}\right)\left(\left[{\boldsymbol \log}\left(\left({{\mathfrak f}^{(x,y),\gamma_0}_{\vec{v},\sigma}}\right)^{-1}\right)\right]_{<3}\right)$  \\
    \hline \hline

    $1$ & 
\begin{tabular}{c}
\tiny$\left(C_1-C_2-2C_4+2C_5\right)X$\\
\tiny$+\left(-C_2+C_3-2C_4+C_6\right)Y$\\
\tiny$+\left(\frac{1}{2}C_2-C_4+C_6-2C_7-C_8\right)[X,Y]$\\
\tiny$+\cdots$ \\
\end{tabular}
 & 
\begin{tabular}{c}
\tiny$\left(C_1-C_2-2C_4+2C_5\right)X+\left(-C_2+C_3-2C_4+C_6\right)Y$\\
\tiny$+\left(\frac{1}{2}C_2-C_4+C_6-2C_7-C_8+2C_9-C_{10}-2C_{11}+2C_{13}\right)[X,Y]$ \\
\tiny$+\left(\frac{1}{12}C_2-\frac{1}{6}C_4+\frac{1}{2}C_6-C_7-C_9-C_{10}+2C_{13}\right)[X,[X,Y]]+\cdots$ \\
\end{tabular} \\ \hline

    $2$ & 
\begin{tabular}{c}
$\left(C_1+C_2\right)X+C_6 Y$
\end{tabular}
 & 
\begin{tabular}{c}
$\left(C_1+C_2\right)X+C_6 Y+C_{10}[X,Y]$
\end{tabular}  \\ \hline

    $3$ & 
\begin{tabular}{c}
\tiny$\left(C_1-C_2\right)X+\left(-C_2+C_3\right)Y$\\
\tiny$+\frac{1}{2}C_2[X,Y]+\cdots$ 
\end{tabular}
 & 
\begin{tabular}{c}
\tiny$\left(C_1-C_2\right)X+\left(-C_2+C_3\right)Y+\left(\frac{1}{2}C_2-C_8\right)[X,Y]$\\
\tiny$-\frac{1}{12}C_2[X,[X,Y]]+\cdots$ 
\end{tabular} \\ \hline

    $4$ & 
\begin{tabular}{c}
\tiny$\left(C_1-C_2-C_4+C_5\right)X$\\
\tiny$+\left(-C_2+C_3-C_4\right)Y$\\
\tiny$+\left(\frac{1}{2}C_2-\frac{1}{2}C_4-C_7\right)[X,Y]+\cdots$
\end{tabular}
 & 
\begin{tabular}{c}
\tiny$\left(C_1-C_2-C_4+C_5\right)X+\left(-C_2+C_3-C_4\right)Y$\\
\tiny$+\left(\frac{1}{2}C_2-\frac{1}{2}C_4-C_7-C_8+C_9-C_{11}\right)[X,Y]$\\
\tiny$+\left(\frac{1}{12}C_2-\frac{1}{12}C_4-\frac{1}{2}C_7-\frac{1}{2}C_9\right)[X,[X,Y]]+\cdots$
\end{tabular}  \\ \hline

    $5$ & 
\begin{tabular}{c}
\tiny$\left(C_1-C_4+C_5\right)X+\left(-C_4+C_6\right)Y$  \\
\tiny$+\left(-\frac{1}{2}C_4+C_6\right)[X,Y]+\cdots$ 
\end{tabular}
 & 
\begin{tabular}{c}
\tiny$\left(C_1-C_4+C_5\right)X+\left(-C_4+C_6\right)Y+\left(-\frac{1}{2}C_4+C_6-C_7+C_{13}\right)[X,Y]$  \\
\tiny$+\left(-\frac{1}{12}C_4+\frac{1}{2}C_6-\frac{1}{2}C_7+C_{13}\right)[X,[X,Y]]+\cdots$ 
\end{tabular} \\ \hline

    $6$ & 
\begin{tabular}{c}
\tiny$\left(-C_4+C_5\right)X+\left(C_3-C_4\right)Y$  \\
\tiny$-\frac{1}{2}C_4[X,Y]+\cdots$
\end{tabular}
& 
\begin{tabular}{c}
\tiny$\left(-C_4+C_5\right)X+\left(C_3-C_4\right)Y+\left(-\frac{1}{2}C_4-C_{11}\right)[X,Y]$  \\
\tiny$-\frac{1}{12}C_4[X,[X,Y]]+\cdots$
\end{tabular}  \\ \hline
   
    $7$ & 
\begin{tabular}{c}
\tiny$\left(-C_2-C_4+C_5\right)X$ \\
\tiny$+\left(-C_2-C_4+C_6\right)Y$\\
\tiny$+\left(-\frac{1}{2}C_2-\frac{1}{2}C_4+C_6\right)[X,Y]+\cdots$
  \end{tabular}
 & 
\begin{tabular}{c}
\tiny$\left(-C_2-C_4+C_5\right)X+\left(-C_2-C_4+C_6\right)Y$ \\
\tiny$+\left(-\frac{1}{2}C_2-\frac{1}{2}C_4+C_6+C_9-C_{10}+C_{13}\right)[X,Y]$\\
\tiny$+\left(-\frac{1}{12}C_2-\frac{1}{12}C_4+\frac{1}{2}C_6+\frac{1}{2}C_9-C_{10}+C_{13}\right)[X,[X,Y]]+\cdots$
  \end{tabular}
 \\ \hline

    $8$ & $C_1 X+C_4 Y$ & $C_1 X+C_4 Y+C_7 [X,Y]$  \\ \hline

    $9$ & $C_2 X+C_5 Y$ & $C_2 X+C_5 Y+C_9 [X,Y]$  \\ \hline
\end{tabular}
\end{table}
\vspace{\stretch{2}}
\pagebreak
\newpage

By combining these formulas obtained above,
we prove the functional equations of $\ell$-adic Galois polylogarithms.

\begin{thm}[Functional equations for $\ell$-adic Galois dilogarithms]\label{l-dilog}
Given a $K$-rational point
$(x,y) \in V_{\rm non\text{-}Fano}(K)$ and a path $\gamma_0 \in \pi_1^{\rm top}\left(V_{\rm non\text{-}Fano}^\an;{\vec{v}},(x,y)\right)$,
define the path system $\{\gamma_i\}_{i=1,\ldots,9}$ associated with $\gamma_0$ as in (\ref{gammai}).
For any $\sigma \in G_K$,
the following hold.
\begin{description}
   \item[(a-$\ell$)~$\ell$-adic Schaeffer equation]
\[
Li_{2}^{\ell}\left({\frac{x(1-y)}{y(1-x)};\gamma_4,\sigma}\right)
-Li_{2}^{\ell}\left(y;\gamma_9,\sigma\right)
+Li_{2}^{\ell}\left(x;\gamma_8,\sigma\right)
-Li_{2}^{\ell}\left(\frac{x}{y};\gamma_3,\sigma\right)
\]
\[
-Li_{2}^{\ell}\left({\frac{1-y}{1-x};\gamma_6,\sigma}\right)
=\rho_{y,\gamma_9}(\sigma)\rho_{\frac{1-y}{1-x},\gamma_6}(\sigma)-{\boldsymbol \zeta}_2^{\ell}(\sigma).
\]
   \item[(b-$\ell$)~$\ell$-adic Kummer equation]
\[
Li_{2}^{\ell}\left({\frac{x(1-y)^2}{y(1-x)^2};\gamma_1,\sigma}\right)
-Li_{2}^{\ell}\left({\frac{x(1-y)}{x-1};\gamma_5,\sigma}\right)
-Li_{2}^{\ell}\left({\frac{1-y}{y(x-1)};\gamma_7,\sigma}\right)
\]
\[
-Li_{2}^{\ell}\left({\frac{x(1-y)}{y(1-x)};\gamma_4,\sigma}\right)
-Li_{2}^{\ell}\left({\frac{1-y}{1-x};\gamma_6,\sigma}\right)
\]
\[
=\frac{1}{2}\left(\rho_{y,\gamma_9}(\sigma)\right)^2+\frac{1}{2}\rho_{y,\gamma_9}(\sigma)+\rho_{1-x,\gamma'_8}(\sigma)-\rho_{1-xy,\gamma'_2}(\sigma).
\]
   \item[(c-$\ell$)~$\ell$-adic Hill equation]
\[
Li_{2}^{\ell}\left({\frac{1-y}{y(x-1)};\gamma_7,\sigma}\right)
+Li_{2}^{\ell}\left({xy;\gamma_2,\sigma}\right)
-Li_{2}^{\ell}\left({x;\gamma_8,\sigma}\right)
\]
\[
-Li_{2}^{\ell}\left({y;\gamma_9,\sigma}\right)
-Li_{2}^{\ell}\left({\frac{x(1-y)}{x-1};\gamma_5,\sigma}\right)
\]
\[
=
-{\boldsymbol \zeta}_2^{\ell}(\sigma)
+\rho_{y,\gamma_9}(\sigma)\rho_{\frac{1-y}{1-x},\gamma_6}(\sigma)
-\frac{1}{2}\left(\rho_{y,\gamma_9}(\sigma)\right)^2-\frac{1}{2}\rho_{y,\gamma_9}(\sigma).
\]
\end{description}
\end{thm}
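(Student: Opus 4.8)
The plan is to mirror the complex proof of Theorem~\ref{c-dilog} step for step, now invoking the $\ell$-adic half of the Nakamura--Wojtkowiak tensor-homotopy criteria \cite[Theorem~5.7, $({\rm i})_\ell$, $({\rm ii})_\ell$, $({\rm iii})_\ell$]{NW12}. Fix $k_i \in \{a_i, b_i, c_i\}$ with $a_i, b_i, c_i$ as in TABLE~\ref{table-abcd}. First I would verify the homotopy criterion $({\rm i})_\ell$, that is $\sum_{i=1}^9 k_i\cdot\varphi_2\!\left({\rm gr}^2_\Gamma(\iota_{\delta_i}\circ f_i)\right) = 0$ in ${\rm Hom}_{\bZ_\ell}\!\left({\rm gr}^2_\Gamma\,\pi_1^\ellet(V_{\rm non\text{-}Fano},\vec v),\,\bZ_\ell\right)$. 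Because the images $(\iota_{\delta_i}\circ f_{i\ast})(B_j)$ coincide with those recorded in TABLE~\ref{table-main}, the induced maps on ${\rm gr}^2_\Gamma$ are exactly the ones tabulated in TABLEs~\ref{table-sub1}--\ref{table-sub3}, so this check is literally identical to the complex one. The tensor criterion $({\rm ii})_\ell$, $\sum_i k_i\cdot\bigl(f_i\wedge(f_i-1)\bigr) = 0$ in ${\CalO}^\times\wedge{\CalO}^\times$, is likewise the same computation, since $f_i$ and $f_i-1$ are unchanged rational functions. Hence $({\rm iii})_\ell$ yields the functional equation $\sum_{i=1}^9 k_i\cdot\mathcal{L}^{\varphi_2(f_i)_{\vec l}}_{\rm nv}\!\left(f_i(x,y),f_i(\vec v);f_i(\gamma_0),\sigma\right) = 0$.

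The remaining work is to unwind this identity into closed form. I would substitute the explicit expansions (\ref{LHSofGalBasic}) for the native $\ell$-adic iterated integrals, pass from the Lie-version $\ell i_k$ to the $\ell$-adic polylogarithms $Li_k^\ell$ by (\ref{li-GaltoLi}), insert the Drinfeld-associator data (\ref{LHSofGalBasic3}), and finally re-express each $\rho_{f_i(x,y),\gamma_i}(\sigma)$ and $\rho_{1-f_i(x,y),\gamma'_i}(\sigma)$ through the six basic Kummer cocycles using the branch relations (\ref{rhobranch}). Carrying this out with $k_i = a_i$, $b_i$, $c_i$ in turn should produce (a-$\ell$), (b-$\ell$), (c-$\ell$), respectively. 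As a sanity check I would compare signs and leading terms against the already-proven complex identities (a,b,c-$\bC$).

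I expect the main obstacle to be the bookkeeping of the lower-degree correction terms, which in the $\ell$-adic setting are genuinely richer than in the complex case: the $\tfrac12\,\ell i_0\cdot\ell i_1$ pieces attached to $i = 5,6,7$ in (\ref{LHSofGalBasic}), the quadratic correction $\tfrac12\,\rho_{f_i(x,y),\gamma_i}(\sigma)\,\rho_{1-f_i(x,y),\gamma'_i}(\sigma)$ introduced in the passage $\ell i_2 \mapsto Li_2^\ell$ via (\ref{li-GaltoLi}), and above all the cyclotomic contributions $\tfrac{\chi(\sigma)-1}{2}$ coming from $\ell i_0(\ovec{0\infty};\delta_5,\sigma)$ and $\ell i_0(\ovec{\infty0};\delta_7,\sigma)$ in (\ref{LHSofGalBasic3}). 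These have no visible counterpart in the complex equations, where the analogous $\pi i$ absorbs them, and tracking them carefully is precisely what produces the extra linear terms $\tfrac12\,\rho_{y,\gamma_9}(\sigma)$, $\rho_{1-x,\gamma'_8}(\sigma)$, $\rho_{1-xy,\gamma'_2}(\sigma)$ on the right-hand sides, as well as the appearance of the $\ell$-adic zeta value ${\boldsymbol\zeta}_2^\ell(\sigma) = {Li}_2^\ell(\ovec{10};\delta_{\ovec{10}},\sigma)$ through $\ell i_2(\ovec{10};\delta_6,\sigma)$ (up to its own BCH correction). The quadratic term $\tfrac12(\rho_{y,\gamma_9}(\sigma))^2$ and the cross term $\rho_{y,\gamma_9}(\sigma)\,\rho_{\frac{1-y}{1-x},\gamma_6}(\sigma)$ should then emerge from the same substitutions once (\ref{rhobranch}) is applied.
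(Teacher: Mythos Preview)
Your plan contains a genuine gap at the point where you invoke $({\rm iii})_\ell$. You write that the criterion yields $\sum_{i=1}^9 k_i\cdot\mathcal{L}^{\varphi_2(f_i)_{\vec l}}_{\rm nv}(\ldots,\sigma)=0$, mirroring the complex identity (\ref{c-func-wii}). This is not what \cite[Theorem~5.7~$({\rm iii})_\ell$, Corollary~5.8]{NW12} says. In the $\ell$-adic case the native iterated integral is defined as $\varphi_{k,\vec l}$ applied to ${\bf log}$ of the pushforward $(\iota_{\delta_i}\circ f_{i\ast})\bigl(({\mathfrak f}^{(x,y),\gamma_0}_{\vec v,\sigma})^{-1}\bigr)$, and because the homomorphisms $\iota_{\delta_i}\circ f_{i\ast}$ send generators $B_j$ to words of length $>1$ (see the entries $l_0 l_1 l_0^{-1}$, $l_1^{-1}l_0 l_1$ in TABLE~\ref{table-main}), they do not preserve the degree filtration. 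The correct functional equation is (\ref{l-func-dilog}): the left-hand side equals $\sum_i k_i\cdot\varphi_{2,\vec l}\bigl((\iota_{\delta_i}\circ f_{i\ast})([{\bf log}(({\mathfrak f}^{(x,y),\gamma_0}_{\vec v,\sigma})^{-1})]_{<2})\bigr)$, which is generically nonzero. Computing this right-hand side requires the first column of TABLE~\ref{table-assoc} together with the identifications (\ref{rhobranch2}), and the outcome is recorded in (\ref{compute1}): it vanishes for $k_i=a_i$ but equals $\tfrac12\rho_{y,\gamma_9}(\sigma)+\rho_{1-x,\gamma'_8}(\sigma)-\rho_{1-xy,\gamma'_2}(\sigma)$ for $k_i=b_i$ and $\tfrac12\rho_{y,\gamma_9}(\sigma)$ for $k_i=c_i$.

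This is precisely where the extra linear terms you are hunting for originate. Your attribution of $\tfrac12\rho_{y,\gamma_9}(\sigma)$, $\rho_{1-x,\gamma'_8}(\sigma)$, $\rho_{1-xy,\gamma'_2}(\sigma)$ to the cyclotomic pieces $\tfrac{\chi(\sigma)-1}{2}$ in (\ref{LHSofGalBasic3}) is mistaken: the paper's computation (\ref{compute2}) shows that after expanding the left-hand side via (\ref{li-GaltoLi}), (\ref{LHSofGalBasic}), (\ref{LHSofGalBasic3}), (\ref{rhobranch}) exactly as you propose, all $\chi(\sigma)$-contributions cancel and one is left only with the $Li_2^\ell$-terms, ${\boldsymbol\zeta}_2^\ell(\sigma)$, the quadratic $\tfrac12(\rho_{y,\gamma_9})^2$, and the cross term $\rho_{y,\gamma_9}\rho_{\frac{1-y}{1-x},\gamma_6}$. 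Setting the right-hand side to zero would therefore reproduce (a-$\ell$) correctly but give wrong versions of (b-$\ell$) and (c-$\ell$), each missing the linear correction. You need to compute both sides of (\ref{l-func-dilog}) separately, using TABLE~\ref{table-assoc} and (\ref{rhobranch2}) for the right-hand side.
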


\begin{proof}
Let $\sigma \in G_K$ and
$k_i \in \{a_i,b_i,c_i\}$, where $a_i,b_i,c_i~(i=1,\ldots,9)$ are shown in TABLE \ref{table-abcd}.
As in the complex case,
the homotopy criterion
\cite[Theorem~5.7~$({\rm i})_{\ell}$]{NW12}
\[
\sum_{i=1}^{9} k_i \cdot \varphi_2\left({\rm gr}_{\Gamma}^{2}\left( \iota_{\delta_i} \circ f_{i\ast} \right) \right) =0~\quad~{\rm in}~\quad~{\rm Hom}_{\bZ_\ell}\left({\rm gr}_{\Gamma}^{2}\left(\pi_1^{\ell \text{\rm -\'et}}\left(V_{\rm non\text{-}Fano}\right),\vec{v}\right),\bZ_{\ell} \right),
\]
and the tensor criterion
\cite[Theorem~5.7~$({\rm ii})_{\ell}$]{NW12}
\[
\sum_{i=1}^{9} k_i \cdot \left( f_i \wedge \left(f_i-1\right) \right) =0~\quad~{\rm in}~\quad~
\left({\CalO}^{\times}/\overline{K}^{\times}\right) \wedge \left({\CalO}^{\times}/\overline{K}^{\times}\right)
\]
hold.
Therefore,
we have the functional equation
\cite[Theorem~5.7~$({\rm iii})_{\ell}$,~Corollary 5.8]{NW12}
\begin{align}\label{l-func-dilog}
\sum_{i=1}^{9} k_i \cdot \mathcal{L}^{\varphi_{2}(f_i)_{\vec{l}}}_{\rm nv}\left(f^{}_{i}(x,y),f_i\left(\vec{v}\right);f_i\left(\gamma_{0}\right),\sigma\right)=\sum_{i=1}^{9} k_i \cdot \varphi_{2,\vec{l}}\left(\left(\iota_{\delta_{i}} \circ f_{i\ast}\right)\left(\left[{\boldsymbol \log}\left(\left({{\mathfrak f}^{(x,y),\gamma_0}_{\vec{v},\sigma}}\right)^{-1}\right)\right]_{<2}\right)\right).
\end{align}
The right-hand side of this equation is the ``$\ell$-adic error term'' referred to in \cite[Subsection 4.3]{NW12} which corresponds to lower weight terms in the functional equation.

By TABLE \ref{table-assoc} and (\ref{rhobranch2}),
the right-hand side of (\ref{l-func-dilog}) equals
\begin{equation} 
\label{compute1}
\begin{dcases}
&0 \qquad (\text{if}~k_i=a_i), \\
&\frac{1}{2}\rho_{y,\gamma_9}(\sigma)+\rho_{1-x,\gamma'_8}(\sigma)-\rho_{1-xy,\gamma'_2}(\sigma) \qquad (\text{if}~k_i=b_i), \\
&-\frac{1}{2}\rho_{y,\gamma_9}(\sigma) \qquad (\text{if}~k_i=c_i).
\end{dcases}
\end{equation}
By (\ref{li-GaltoLi}), (\ref{LHSofGalBasic}), (\ref{LHSofGalBasic3}) and (\ref{rhobranch}),
the left-hand side of (\ref{l-func-dilog}) equals 
\begin{equation} 
\label{compute2}
\begin{dcases}
&Li_{2}^{\ell}\left({\frac{x(1-y)}{y(1-x)};\gamma_4,\sigma}\right)
-Li_{2}^{\ell}\left(y;\gamma_9,\sigma\right)
+Li_{2}^{\ell}\left(x;\gamma_8,\sigma\right)
-Li_{2}^{\ell}\left(\frac{x}{y};\gamma_3,\sigma\right)\\
& \quad -Li_{2}^{\ell}\left({\frac{1-y}{1-x};\gamma_6,\sigma}\right)
-\rho_{y,\gamma_9}(\sigma)\rho_{\frac{1-y}{1-x},\gamma_6}(\sigma)+{\boldsymbol \zeta}_2^{\ell}(\sigma) \qquad (\text{if}~k_i=a_i), \\
& \\
&Li_{2}^{\ell}\left({\frac{x(1-y)^2}{y(1-x)^2};\gamma_1,\sigma}\right)
-Li_{2}^{\ell}\left({\frac{x(1-y)}{x-1};\gamma_5,\sigma}\right)
-Li_{2}^{\ell}\left({\frac{1-y}{y(x-1)};\gamma_7,\sigma}\right)\\
&-Li_{2}^{\ell}\left({\frac{x(1-y)}{y(1-x)};\gamma_4,\sigma}\right)-Li_{2}^{\ell}\left({\frac{1-y}{1-x};\gamma_6,\sigma}\right)-\frac{1}{2}\left(\rho_{y,\gamma_9}(\sigma)\right)^2 \qquad (\text{if}~k_i=b_i), \\
& \\
&Li_{2}^{\ell}\left({\frac{1-y}{y(x-1)};\gamma_7,\sigma}\right)
+Li_{2}^{\ell}\left({xy;\gamma_2,\sigma}\right)
-Li_{2}^{\ell}\left({x;\gamma_8,\sigma}\right)
-Li_{2}^{\ell}\left({y;\gamma_9,\sigma}\right)\\
&-Li_{2}^{\ell}\left({\frac{x(1-y)}{x-1};\gamma_5,\sigma}\right)
+{\boldsymbol \zeta}_2^{\ell}(\sigma)
-\rho_{y,\gamma_9}(\sigma)\rho_{\frac{1-y}{1-x},\gamma_6}(\sigma)+\frac{1}{2}\left(\rho_{y,\gamma_9}(\sigma)\right)^2 \qquad (\text{if}~k_i=c_i).
\end{dcases}
\end{equation}
By substituting (\ref{compute1}) and (\ref{compute2}) into (\ref{l-func-dilog}),
we obtain the desired equations
(a-$\ell$), (b-$\ell$) and (c-$\ell$).
This completes the proof of Theorem \ref{l-dilog}.
\end{proof}

\begin{proof}[Proof of Theorem \ref{main2}]
Here we prove (d-$\ell$).
Let $\sigma \in G_K$.
As in the complex case,
the tensor homotopy criteria \cite[Theorem~5.7, $({\rm i})_{\ell}$,~$({\rm ii})_{\ell}$]{NW12} hold:
\begin{align}\label{l-hom}
\sum_{i=1}^{9} d_i \cdot \varphi_3\left({\rm gr}_{\Gamma}^{3}\left( \iota_{\delta_i} \circ f_{i\ast} \right) \right) =0~\quad~{\rm in}~\quad~{\rm Hom}_{\bZ_\ell}\left({\rm gr}_{\Gamma}^{3}\left(\pi_1^{\ell \text{\rm -\'et}}\left(V_{\rm non\text{-}Fano}\right),\vec{v}\right) ,\bZ_{\ell} \right),
\end{align}
\begin{align}\label{l-ten}
\sum_{i=1}^{9} d_i \left(f_i \otimes \left( f_i \wedge \left(f_i-1\right) \right) \right) =0~\quad~{\rm in}~\quad~
\left({\CalO}^{\times}/\overline{K}^{\times}\right) \otimes \left(\left({\CalO}^{\times}/\overline{K}^{\times}\right) \wedge \left({\CalO}^{\times}/\overline{K}^{\times}\right) \right),
\end{align}
where $d_i~(i=1,\ldots,9)$ are shown in TABLE \ref{table-abcd}.
Therefore,
we have the functional equation
\cite[Theorem~5.7~$({\rm iii})_{\ell}$,~Corollary 5.8]{NW12}:
\begin{align}\label{l_ii3}
\sum_{i=1}^{9} d_i \cdot \mathcal{L}^{\varphi_{3}(f_i)_{\vec{l}}}_{\rm nv}\left(f^{}_{i}(x,y),f_i\left(\vec{v}\right);f_i \left(\gamma_{0}\right),\sigma\right)=
\sum_{i=1}^{9} d_i \cdot \varphi_{3,\vec{l}}\left(\left(\iota_{\delta_{i}} \circ f_{i\ast}\right)\left(\left[{\boldsymbol \log}\left(\left({{\mathfrak f}^{(x,y),\gamma_0}_{\vec{v},\sigma}}\right)^{-1}\right)\right]_{<3}\right)\right).
\end{align}
The right-hand side of this equation gives the ``$\ell$-adic error term'' as defined in \cite[Subsection 4.3]{NW12}.
By TABLE \ref{table-assoc} and (\ref{rhobranch2}),
the right-hand side of (\ref{l_ii3}) equals
\begin{align}\label{compute3}
&-Li_2^\ell\left(\dfrac{x(1-y)}{x-1};\gamma_5,\sigma\right)-Li_2^\ell\left(\dfrac{1-y}{y(x-1)};\gamma_7,\sigma\right)-\dfrac{1}{2}\rho_{\frac{1-xy}{1-x},\gamma'_5}(\sigma)\left(\rho_{\frac{x(1-y)^2}{y(1-x)^2},\gamma_1}(\sigma)-1\right)\\
&\quad -\dfrac{1}{6}\rho_{y,\gamma_9}(\sigma)\left(2-3\rho_{\frac{1-y}{1-x},\gamma_6}(\sigma)+3\rho_{y,\gamma_9}(\sigma)\right)-{\boldsymbol \zeta}^{\ell}_2(\sigma). \notag
\end{align}
Substituting (\ref{LHSofGalBasic2}) and (\ref{compute3}) into (\ref{l_ii3}) and applying (\ref{li-GaltoLi}), (\ref{LHSofGalBasic3}) and (\ref{rhobranch}),
we obtain the desired equation
(d-$\ell$).
In this process,
the nine $Li_2^{\ell}$ terms appearing on the left-hand side of (\ref{l_ii3}) are cancelled by using (a-$\ell$), (b-$\ell$) and (c-$\ell$).
This completes the proof of (d-$\ell$).
We will prove 
(d'-$\ell$) in Corollary \ref{l-char} later.
\end{proof}

\begin{rem}\label{imprem3}
Note that the right-hand side of (\ref{l_ii3}) is non-zero,
as computed in (\ref{compute3}),
whereas the corresponding quantity in the complex case (\ref{c-func-wii3}) vanishes identically.
This discrepancy reflects precisely the structural difference described in Remark \ref{Zl-integral}:
the $\ell$-adic error terms in (\ref{l_ii3}) are the cumulative effect of the higher-order BCH terms propagated through the non-trivial weight-2 contributions in Table \ref{table-assoc}.
 In particular,
it is this non-vanishing right-hand side that gives rise to the lower-weight terms appearing explicitly in
Theorem \ref{main2} (d-$\ell$),
which have no analogue in the complex Spence--Kummer equation (d-$\mathbb{C}$).
Furthermore, as we will demonstrate in the subsequent $\mathbb{Z}_\ell$-integrality tests (see Remark \ref{Zl-test}),
this non-vanishing cumulative error term is not merely a geometric byproduct, but a necessary arithmetic correction indispensable for preserving the strict $\mathbb{Z}_\ell$-integrality specific to the $\ell$-adic Galois setting.
\end{rem}

\begin{rem}\label{landen} \normalfont
Let us consider a special case of the $\ell$-adic Spence--Kummer equation ($d$-$\ell$) where
$x \to 0$~(i.e.,~taking $x$ as a tangential base point at 0).
We write
\begin{align} \notag
&\hat{\gamma}_{i}:=\left. \gamma_i \right|_{x \to 0} \in \pi_1^{\rm top}\left(\mathbb{P}^1(\mathbb{C}) \backslash \{0,1,\infty\}; \overrightarrow{01}, \left. f^{\an}_{i}(x,y) \right|_{x \to 0} \right),\\
&{\hat{\gamma}''}_{8}:= \delta_{\overrightarrow{0\infty}} \cdot \phi_{\overrightarrow{0\infty}}\left(\hat{\gamma}_{8}\right)
\in \pi_1^{\rm top}\left(\mathbb{P}^1(\mathbb{C}) \backslash \{0,1,\infty\}; \overrightarrow{01}, \frac{y}{y-1} \right) \notag
\end{align}
where $\phi_{\overrightarrow{0\infty}} \in {\rm Aut} \left( \mathbb{P}^1(\mathbb{C}) \backslash \{0,1,\infty\} \right)~\text{is given by}~\phi_{\overrightarrow{0\infty}}(t)=\frac{t}{t-1}.$
Then,
we obtain Landen's $\ell$-adic trilogarithm functional equation, which is equivalent to \cite[Theorem 1.1]{NS25}
\begin{align} \notag
&{Li}_{3}^{\ell}\left(y;\hat{\gamma}_8,\sigma\right)
+{Li}_{3}^{\ell}\left(1-y;{\hat{\gamma}'}_{8},\sigma\right)
+{Li}_{3}^{\ell}\left(\frac{y}{y-1};{\hat{\gamma}''}_{8},\sigma\right)\\ 
&\quad ={\boldsymbol \zeta}_{3}^{\ell}(\sigma)-{\boldsymbol \zeta}_{{2}}^\ell(\sigma)\rho_{1-y,{\hat{\gamma}'}_{8}}(\sigma)+\frac{1}{2}\rho_{y,\hat{\gamma}_{8}}(\sigma)\left({\rho_{1-y,{\hat{\gamma}'}_{8}}(\sigma)}\right)^2-\frac{1}{6}\left({\rho_{1-y,{\hat{\gamma}'}_{8}}(\sigma)}\right)^3 \\ \notag
&\qquad
-\frac{1}{2}{Li}^\ell_{2}\left(y;\hat{\gamma}_{8},\sigma\right)
-\frac{1}{12}\rho_{1-y,{\hat{\gamma}'}_{8}}(\sigma)-\frac{1}{4}\left({\rho_{1-y,{\hat{\gamma}'}_{8}}(\sigma)}\right)^2
\end{align}
by combining the $\ell$-adic Spence--Kummer equation ($d$-$\ell$) with $x \to 0$ and
the inversion formula \cite[(6.31)]{NW12}.
By a similar computation,
we can obtain Landen's complex trilogarithm functional equation \cite{L1780}, \cite[(1.3)]{NS25} from a special case of the Spence--Kummer equation ($d$-$\bC$) with
$x \to 0$ and the inversion formula \cite[(6.26)]{NW12}.
\end{rem}

\begin{cor}[Functional equations for the generalized $\ell$-adic Soul\'e character]\label{l-char}
Given a $K$-rational point
$(x,y) \in V_{\rm non\text{-}Fano}(K)$ and a path $\gamma_0 \in \pi_1^{\rm top}\left(V_{\rm non\text{-}Fano}^\an;{\vec{v}},(x,y)\right)$,
define the path system $\{\gamma_i\}_{i=1,\ldots,9}$ associated with $\gamma_0$ as in (\ref{gammai}).
For any $\sigma \in G_K$,
the following hold.
\begin{description}
   \item[(a'-$\ell$)~Integral $\ell$-adic Schaeffer equation]
\begin{align} \notag
&\tilde{\chi}_{2}^{\frac{x(1-y)}{y(1-x)},\gamma_4}(\sigma)
-\tilde{\chi}_{2}^{{y},\gamma_9}(\sigma)
+\tilde{\chi}_{2}^{{x},\gamma_8}(\sigma)
-\tilde{\chi}_{2}^{{\frac{x}{y}},\gamma_3}(\sigma)
-\tilde{\chi}_{2}^{\frac{1-y}{1-x},\gamma_6}(\sigma)\\ \notag
&\quad =
\rho_{\frac{1-y}{1-x},\gamma_6}(\sigma)\rho_{y,\gamma_9}(\sigma)
-\tilde{\chi}_{2}^{{\overrightarrow{10}},\delta_{\overrightarrow{10}}}(\sigma).
\notag
\end{align}
   \item[(b'-$\ell$)~Integral $\ell$-adic Kummer equation]
\begin{align} \notag
&\tilde{\chi}_{2}^{\frac{x(1-y)^2}{y(1-x)^2},\gamma_1}(\sigma)
-\tilde{\chi}_{2}^{\frac{x(1-y)}{x-1},\gamma_5}(\sigma)
-\tilde{\chi}_{2}^{\frac{1-y}{y(x-1)},\gamma_7}(\sigma)
-\tilde{\chi}_{2}^{\frac{x(1-y)}{y(1-x)},\gamma_4}(\sigma)
-\tilde{\chi}_{2}^{\frac{1-y}{1-x},\gamma_6}(\sigma)\\ \notag
&\quad =
\frac{1}{2}\left(\rho_{y,\gamma_9}(\sigma)\right)^2
-\frac{1}{2}\rho_{y,\gamma_9}(\sigma)
-\rho_{1-x,\gamma'_8}(\sigma)
+\rho_{1-xy,\gamma'_2}(\sigma)\\
& \quad \quad
+\left(\frac{\chi(\sigma)-1}{2}\right)\left(2\rho_{1-xy,\gamma'_2}(\sigma)-2\rho_{1-x,\gamma'_8}(\sigma)-\rho_{y,\gamma_9}(\sigma)\right).
\notag
\end{align}
   \item[(c'-$\ell$)~Integral $\ell$-adic Hill equation]
\begin{align} \notag
&
\tilde{\chi}_{2}^{\frac{1-y}{y(x-1)},\gamma_7}(\sigma)
+\tilde{\chi}_{2}^{{xy},\gamma_2}(\sigma)
-\tilde{\chi}_{2}^{{x},\gamma_8}(\sigma)
-\tilde{\chi}_{2}^{{y},\gamma_9}(\sigma)
-\tilde{\chi}_{2}^{\frac{x(1-y)}{x-1},\gamma_5}(\sigma) \notag \\
&\quad =
-\tilde{\chi}_{2}^{{\overrightarrow{10}},\delta_{\overrightarrow{10}}}(\sigma)
+\rho_{\frac{1-y}{1-x},\gamma_6}(\sigma)\rho_{y,\gamma_9}(\sigma)
-\frac{1}{2}\left(\rho_{y,\gamma_9}(\sigma)\right)^2
+\frac{1}{2}\rho_{y,\gamma_9}(\sigma) \notag \\
&\quad \quad+\left(\frac{\chi(\sigma)-1}{2}\right)\rho_{y,\gamma_9}(\sigma).
\notag
\end{align}
  \item[(d'-$\ell$)~Integral $\ell$-adic Spence-Kummer equation]
\begin{align} \notag
&\tilde{\chi}_{3}^{{\frac{x(1-y)^2}{y(1-x)^2}},\gamma_1}(\sigma)
+\tilde{\chi}_{3}^{{xy},\gamma_2}(\sigma)
+\tilde{\chi}_{3}^{{\frac{x}{y}},\gamma_3}(\sigma)
-2\tilde{\chi}_{3}^{\frac{x(1-y)}{y(1-x)},\gamma_4}(\sigma)-2\tilde{\chi}_{3}^{\frac{x(1-y)}{x-1},\gamma_5}(\sigma) \notag  \\
& \quad
-2\tilde{\chi}_{3}^{\frac{1-y}{1-x},\gamma_6}(\sigma)-2\tilde{\chi}_{3}^{\frac{1-y}{y(x-1)},\gamma_7}(\sigma)
-2\tilde{\chi}_{3}^{{x},\gamma_8}(\sigma)-2\tilde{\chi}_{3}^{{y},\gamma_9}(\sigma)+2\tilde{\chi}_{3}^{{\overrightarrow{10}},\delta_{\overrightarrow{10}}}(\sigma) \notag \\
&\quad \quad=-2\rho_{y,\gamma_9}(\sigma)^2 \rho_{\frac{1-y}{1-x},\gamma_6}(\sigma)+\left(\frac{1-\chi(\sigma)^2}{2}\right)\rho_{y,\gamma_9}(\sigma)+\frac{2}{3}\rho_{y,\gamma_9}(\sigma)^3 \notag \\
& \quad \quad \quad
+2\chi(\sigma)\left(\tilde{\chi}_{2}^{\frac{x(1-y)}{x-1},\gamma_5}(\sigma)+\tilde{\chi}_{2}^{\frac{1-y}{y(x-1)},\gamma_7}(\sigma)\right)+
\chi(\sigma)^2\rho_{\frac{1-xy}{1-x},\gamma'_5}(\sigma)-\frac{2}{3}\rho_{y,\gamma_9}(\sigma). \notag
\end{align}
\end{description}
\end{cor}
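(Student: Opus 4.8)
\textbf{Proof strategy for Corollary \ref{l-char}.}
The plan is to deduce \textbf{(a$'$-$\ell$)}--\textbf{(d$'$-$\ell$)} directly from their $Li^{\ell}$-counterparts in Theorem \ref{l-dilog} and Theorem \ref{main2}, by converting the $\ell$-adic Galois polylogarithms into the $\ell$-adic Galois polylogarithmic characters. The key point is that $-Li_k^{\ell}(z;\gamma,\sigma)=\Coeff_{X^{k-1}Y}\!\left(\mathfrak{f}^{z,\gamma}_{\overrightarrow{01},\sigma}\right)$ and $-\tfrac{1}{(k-1)!}\,\tilde\chi_k^{z,\gamma}(\sigma)=\Coeff_{YX^{k-1}}\!\left(\mathfrak{f}^{z,\gamma}_{\overrightarrow{01},\sigma}\right)$ are coefficients of one and the same group-like series $\mathfrak{f}^{z,\gamma}_{\overrightarrow{01},\sigma}\in\Q_{\ell}\langle\!\langle X,Y\rangle\!\rangle$, see (\ref{expli}). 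Group-likeness makes $\Coeff$ multiplicative on shuffle products; applying this to $X\shuffle Y=XY+YX$, to $X^{2}\shuffle Y=X^{2}Y+XYX+YX^{2}$ and to $X\shuffle XY=2X^{2}Y+XYX$, together with $\Coeff_X\!\left(\mathfrak{f}^{z,\gamma}_{\overrightarrow{01},\sigma}\right)=-\rho_{z,\gamma}(\sigma)$ and $\Coeff_Y\!\left(\mathfrak{f}^{z,\gamma}_{\overrightarrow{01},\sigma}\right)=-\rho_{1-z,\gamma'}(\sigma)$ from (\ref{expli}), yields the conversion formulas
\begin{align*}
\tilde\chi_2^{z,\gamma}(\sigma)&=-Li_2^{\ell}(z;\gamma,\sigma)-\rho_{z,\gamma}(\sigma)\,\rho_{1-z,\gamma'}(\sigma),\\
\tilde\chi_3^{z,\gamma}(\sigma)&=2\,Li_3^{\ell}(z;\gamma,\sigma)+2\,\rho_{z,\gamma}(\sigma)\,Li_2^{\ell}(z;\gamma,\sigma)+\rho_{z,\gamma}(\sigma)^{2}\,\rho_{1-z,\gamma'}(\sigma).
\end{align*}
Evaluating at the tangential base point $\overrightarrow{10}$ and using (\ref{lpoly2}) with $\rho_{\overrightarrow{10},\delta_{\overrightarrow{10}}}(\sigma)=0$ gives $\tilde\chi_2^{\overrightarrow{10},\delta_{\overrightarrow{10}}}(\sigma)=-{\boldsymbol\zeta}_2^{\ell}(\sigma)$ and $\tilde\chi_3^{\overrightarrow{10},\delta_{\overrightarrow{10}}}(\sigma)=2{\boldsymbol\zeta}_3^{\ell}(\sigma)$.

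For \textbf{(a$'$-$\ell$)}, \textbf{(b$'$-$\ell$)}, \textbf{(c$'$-$\ell$)} I would substitute the first conversion formula into \textbf{(a-$\ell$)}, \textbf{(b-$\ell$)}, \textbf{(c-$\ell$)} of Theorem \ref{l-dilog}. Since the coefficients $a_i,b_i,c_i$ are the same, the $Li_2^{\ell}$-part transforms, up to an overall sign absorbed into the right-hand side, exactly into the left-hand side of the corresponding primed equation, and what remains is a quadratic identity among Kummer cocycles. This is routine: by (\ref{rhobranch}) every $\rho_{f_i(x,y),\gamma_i}(\sigma)$ and $\rho_{1-f_i(x,y),\gamma'_i}(\sigma)$ is a $\mathbb{Z}$-linear combination of the six basic cocycles $\rho_{x,\gamma_8},\rho_{y,\gamma_9},\rho_{1-x,\gamma'_8},\rho_{1-y,\gamma'_9},\rho_{1-xy,\gamma'_2},\rho_{1-x/y,\gamma'_3}$ and of $\tfrac{\chi(\sigma)-1}{2}$ (the last occurring only through $i=5,7$, whose base points $f_5(\vec v),f_7(\vec v)$ are the nonstandard tangential base points $\overrightarrow{0\infty},\overrightarrow{\infty0}$), so the asserted identities become finite quadratic identities checked termwise.

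For \textbf{(d$'$-$\ell$)} I would substitute the second conversion formula into \textbf{(d-$\ell$)} of Theorem \ref{main2} and use $\tilde\chi_3^{\overrightarrow{10},\delta_{\overrightarrow{10}}}(\sigma)=2{\boldsymbol\zeta}_3^{\ell}(\sigma)$; the Soul\'e-element terms then cancel and the genuine trilogarithms reorganize into the trilogarithmic characters on the left-hand side. The main obstacle is the family of weight-three mixed terms $\sum_i d_i\,\rho_{f_i(x,y),\gamma_i}(\sigma)\,Li_2^{\ell}(f_i(x,y);\gamma_i,\sigma)$ created by the conversion, which do not individually lie in the span of the dilogarithm relations. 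The plan is: expand each cocycle $\rho_{f_i(x,y),\gamma_i}(\sigma)$ by (\ref{rhobranch}) into basic cocycles, regroup the sum as $\sum_j(\text{basic cocycle})_j\cdot\bigl(\sum_i(\text{integer})\,Li_2^{\ell}(f_i(x,y);\gamma_i,\sigma)\bigr)$, and replace each inner $Li_2^{\ell}$-combination by its value coming from Schaeffer's, Kummer's and Hill's equations \textbf{(a-$\ell$)}--\textbf{(c-$\ell$)}. After this, only $Li_2^{\ell}(f_5(x,y);\gamma_5,\sigma)$ and $Li_2^{\ell}(f_7(x,y);\gamma_7,\sigma)$ survive; combined with the $\tfrac{\chi(\sigma)-1}{2}$-pieces of $\rho_{f_5(x,y),\gamma_5}(\sigma),\rho_{f_7(x,y),\gamma_7}(\sigma)$ and re-converted once more by the first conversion formula, they assemble into the terms $2\chi(\sigma)\bigl(\tilde\chi_2^{f_5(x,y),\gamma_5}(\sigma)+\tilde\chi_2^{f_7(x,y),\gamma_7}(\sigma)\bigr)$ and $\chi(\sigma)^{2}\rho_{\frac{1-xy}{1-x},\gamma'_5}(\sigma)$ on the right-hand side, while the remaining cubic $\rho$- and $\chi$-terms collapse to $-2\rho_{y,\gamma_9}(\sigma)^{2}\rho_{\frac{1-y}{1-x},\gamma_6}(\sigma)+\tfrac{1-\chi(\sigma)^{2}}{2}\rho_{y,\gamma_9}(\sigma)+\tfrac23\rho_{y,\gamma_9}(\sigma)^{3}-\tfrac23\rho_{y,\gamma_9}(\sigma)$ after a final application of (\ref{rhobranch}) and the explicit description of ${\boldsymbol\zeta}_2^{\ell}(\sigma)$ in terms of the cyclotomic character. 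Since every surviving term lies in $\Z_\ell$, this realises \textbf{(d$'$-$\ell$)} as a $\Z_\ell$-integral reformulation of \textbf{(d-$\ell$)}, which is the point of the corollary.

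Alternatively, one can bypass the conversion entirely and re-run the Nakamura--Wojtkowiak tensor--homotopy criteria \cite[Theorem~5.7]{NW12} while extracting from the $\ell$-adic Galois associators the dual Hall coefficients governing $\tilde\chi_k$ (the $YX^{k-1}$-components of $\mathfrak{f}$) rather than those governing $Li_k^{\ell}$: the homotopy criteria $(\mathrm{i})_\ell$ and tensor criteria $(\mathrm{ii})_\ell$ are literally the ones already verified in the proofs of Theorems \ref{l-dilog} and \ref{main2}, so only the polylog-BCH bookkeeping of \cite[Proposition 5.9]{NW12} would have to be redone for the dual functional, together with the evaluation of the dual tangential associators from (\ref{LHSofGalBasic3}).
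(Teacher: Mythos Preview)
Your proposal is correct and follows essentially the same approach as the paper: the paper's proof is a single line, ``Putting the conversion formula \cite[Proposition~4.2~(ii)]{NS22} into (a-$\ell$), (b-$\ell$), (c-$\ell$) and (d-$\ell$), we obtain the desired equations,'' and the conversion formulas you derive from the shuffle structure of the group-like associator are precisely those of \cite[Proposition~4.2]{NS22}. Your write-up is simply a more explicit unpacking of that citation, including the bookkeeping with (\ref{rhobranch}) and the use of (a-$\ell$)--(c-$\ell$) to eliminate the mixed $\rho\cdot Li_2^{\ell}$ terms in the trilogarithm case, which the paper leaves implicit.
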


\begin{proof}
By substituting the explicit formula (\ref{explicit3}) into (a-$\ell$), (b-$\ell$), (c-$\ell$) and (d-$\ell$),
we obtain the desired equations
(a'-$\ell$), (b'-$\ell$), (c'-$\ell$) and (d'-$\ell$).
\end{proof}


\noindent
{\bf $\Z_\ell$-integrality tests:}~
The above functional equations ($a',b',c',d'$-$\ell$) enable us to check the $\bZ_\ell$-integrality of both sides of each equation.
By the definitions (\ref{gsoule}), (\ref{kummer1}) and
\[
\chi(\sigma) \equiv 1~{\rm mod}~2,
\] 
the right-hand sides of ($a',b',c'$-$\ell$) have no denominator.
The right-hand side of ($d'$-$\ell$)
is equal to
\[
-2\rho_{y,\gamma_9}(\sigma)^2 \rho_{\frac{1-y}{1-x},\gamma_6}(\sigma)-12\tilde{\chi}_{2}^{{\overrightarrow{10}},\delta_{\overrightarrow{10}}}(\sigma)\rho_{y,\gamma_9}(\sigma)
+2\chi(\sigma)\left(\tilde{\chi}_{2}^{\frac{x(1-y)}{x-1},\gamma_5}(\sigma)+\tilde{\chi}_{2}^{\frac{1-y}{y(x-1)},\gamma_7}(\sigma)\right)
\]
\[
+
\chi(\sigma)^2\rho_{\frac{1-xy}{1-x},\gamma'_5}(\sigma)
-\frac{2}{3}\rho_{y,\gamma_9}(\sigma)\left(1-\rho_{y,\gamma_9}(\sigma)\right)\left(1+\rho_{y,\gamma_9}(\sigma)\right),
\]
so it also has no denominator.

\begin{rem}\label{Zl-test}
In the formula ($d'$-$\ell$),
the term $-\frac{2}{3}\rho_{y,\gamma_9}(\sigma)$ is an $\ell$-adic error term (a lower-weight term), but the term $\frac{2}{3}\rho_{y,\gamma_9}(\sigma)^3$ is not.
As indicated by the $\Z_\ell$-integrality tests mentioned above,
these two fractional terms of different weights work together,
making 
the sum
\[
-\frac{2}{3}\rho_{y,\gamma_9}(\sigma)+\frac{2}{3}\rho_{y,\gamma_9}(\sigma)^3
\]
an $\ell$-adic integer.
This highlights the crucial role of the $\ell$-adic lower-weight error term in ensuring that both sides of the integral $\ell$-adic functional equation ($d'$-$\ell$) agree exactly,
and that the $\mathbb{Z}_\ell$-integrality of both sides is preserved.
\end{rem}



\end{document}